\definecolor{shadecolor}{rgb}{1,0.9,0.7}
\newtheorem{theorem}{Theorem}  
\newtheorem{lemma}[theorem]{Lemma}
\newtheorem{lemma-definition}[theorem]{Lemma-Definition}
\newtheorem{proposition}[theorem]{Proposition}
\newtheorem{corollary}[theorem]{Corollary}
\theoremstyle{definition}
\newtheorem{setup}[theorem]{Setup}
\newtheorem{definition}[theorem]{Definition}
\newtheorem{construction}[theorem]{Construction}
\newtheorem{example}[theorem]{Example}
\newtheorem{cexample}[theorem]{Counter-Example}
\theoremstyle{remark}
\newtheorem{remark}[theorem]{Remark}
\numberwithin{equation}{section}
\numberwithin{figure}{section}
\newcommand {\lfor} {\llbracket}
\newcommand {\rfor} {\rrbracket}
\newcommand{\ZZ} {\mathbb{Z}}
\newcommand{\QQ} {\mathbb{Q}}
\newcommand{\RR} {\mathbb{R}}
\newcommand{\CC} {\mathbb{C}}
\newcommand{\PP} {\mathbb{P}}
\newcommand {\shA}  {\mathcal{A}}
\newcommand {\shB}  {\mathcal{B}}
\newcommand {\shF}  {\mathcal{F}}
\newcommand {\shG}  {\mathcal{G}}
\newcommand {\shO}  {\mathcal{O}}
\newcommand {\shT}  {\mathcal{T}}
\newcommand {\shP}  {\mathcal{P}}
\newcommand {\foX}  {\mathfrak{X}}
\newcommand {\fou}  {\mathfrak{u}}
\newcommand {\bary} {{\operatorname{bar}}}
\newcommand {\codim} {\operatorname{codim}}
\newcommand {\eps}  {\varepsilon}
\newcommand {\GL}  {\operatorname{GL}}
\newcommand {\Gr}  {\operatorname{Gr}}
\newcommand {\Hom}  {\operatorname{Hom}}
\newcommand {\hra} {\hookrightarrow}
\newcommand {\id}  {\operatorname{id}}
\newcommand {\im}  {\operatorname{im}}
\newcommand {\Int}  {\operatorname{Int}}
\renewcommand {\ker } {\operatorname{ker}}
\newcommand {\lra}  {\longrightarrow}
\renewcommand {\max} {{\operatorname{max}}}
\let\op\operatorname
\newcommand {\orr}  {\operatorname{or}}
\newcommand {\ori} {\operatorname{or}}
\renewcommand{\P}  {\mathscr{P}}
\newcommand {\ra}  {\to}
\newcommand {\rk} {\operatorname{rk}}
\newcommand {\Sing} {\operatorname{Sing}}
\newcommand {\Spf}  {\operatorname{Spf}}
\newcommand {\sra} {\twoheadrightarrow}
\newcommand {\triang} {\triangle}
\def\mydate{\ifcase\month \or January\or February\or March\or
April\or May\or June\or July\or August\or September\or October\or 
November\or December\fi \space\number\day,\space\number\year}
\begin{document}


\begin{abstract} 
We introduce a cap product pairing for homology and cohomology of tropical cycles on integral affine manifolds with singularities. We show the pairing is perfect over $\QQ$ in degree one when the manifold has at worst symple singularities. By joint work with Siebert, the pairing computes period integrals and its perfectness implies the versality of canonical Calabi--Yau degenerations. We also give an intersection theoretic application for Strominger--Yau--Zaslow fibrations. The treatment of the cap product and Poincar\'e--Lefschetz by simplicial methods for constructible sheaves might be of independent interest.
\end{abstract}

\title
[A homology theory for tropical cycles and a perfect pairing]
{A homology theory for tropical cycles on integral affine manifolds and a perfect pairing}
\author{Helge Ruddat}

\address{\tiny JGU Mainz, Staudingerweg 9, 55128 Mainz \quad\&\quad Universit\"at Hamburg, Bundesstr. 55, 20146 Hamburg}
\email{ruddat@uni-mainz.de, helge.ruddat@uni-hamburg.de}


\thanks{This work was supported by a Carl--Zeiss Postdoctoral Fellowship and DFG grant RU 1629/4-1}

\maketitle
\setcounter{tocdepth}{1}
\tableofcontents
\bigskip

\section*{Introduction}
The work on this article started out with the intention to develop a homology theory for the type of cycles given in \cite[Definition 7.2]{CBM13} which then found a use in \cite{RS20}. 
Let us begin with a simpler example.
\subsection{A fibred K3 as a leading example}
Let $f\colon X\ra B:=\PP^1$ be an elliptic fibration of a K3 surface. The topology of the cup product $H^2(X,\ZZ)\otimes H^2(X,\ZZ)\ra\ZZ$ can be studied using the Leray filtration of $f$. 
In the case where we have a section $B\ra X$ and all fibers are irreducible, i.e.~$R^2f_*\ZZ\cong\ZZ$, the section determines uniquely an isomorphism
$$H^2(X,\ZZ)= H^2(B,\ZZ)\oplus H^1(B,R^1f_*\ZZ)\oplus H^0(B,\ZZ)$$
identifying the middle summand as the orthogonal complement of the outer two. The outer two summands form a hyperbolic plane, that is, their sum is isomorphic to $H=\ZZ^2$ with pairing given by $\bigl( \begin{smallmatrix}0 & 1\\ 1 & 0\end{smallmatrix}\bigr)$. The cup product of the middle summand is the natural pairing
$$H^1(B,R^1f_*\ZZ)\otimes H^1(B,R^1f_*\ZZ)\ra H^2(B,R^2f_*\ZZ)=\ZZ.$$
The lattice $\ZZ^8$ with pairing given by $(-1)$ times the $E_8$ matrix is called $-E_8$. 
As lattices, there exists an isomorphism $H^2(X,\ZZ)\cong-E_8^{\oplus 2}\oplus H^{\oplus 3}$, and in fact $H^1(B,R^1f_*\ZZ)\cong -E_8^{\oplus 2}\oplus H^{\oplus 2}$. 
(The latter statement can be shown using \cite{Sy01} as was pointed out in \cite[\S1.4.3]{RS20})

\subsection{Frames on affine manifolds}
If $\Delta\subset B$ denotes the discriminant of $f$, then $\check\Lambda:=(R^1f_*\ZZ)|_{B\setminus\Delta}$ is a local system on $B\setminus\Delta$ with stalks isomorphic to $\ZZ^2$.
Assume next that all singular fibers of $f$ are $I_1$-fibers, i.e.~rational curves with a single node, then,
for $\iota\colon B\setminus\Delta\hra B$ the inclusion of the complement, the natural map
\begin{equation} \label{eq-simple-iso}
R^1f_*\ZZ\ra \iota_*\iota^{-1}R^1f_*\ZZ=\iota_*\check\Lambda
\end{equation}
is an isomorphism. This property for a fibration was called $\ZZ$-{\bf simple} in \cite{topMS}.
We therefore have a unimodular bilinear form 
\begin{equation} \label{eq-bil-form}
H^1(B,\iota_*\check\Lambda)\otimes H^1(B,\iota_*\check\Lambda)\ra\ZZ.
\end{equation} 
Since the regular fibers of $f$ satisfy Poincar\'e duality and are of dimension two, we furthermore obtain an identification $\Lambda\cong\check\Lambda$ where $\Lambda:=\Hom(\check\Lambda,\ZZ)$.
A proof of the following form of Poincar\'e--Lefschetz duality is included in Appendix~\ref{sec-hocoho-compare}. 
Let $H_k$ refer to the $k$th {\bf singular sheaf homology}, see e.g.~\cite[VI-\S12, p.443]{Br97} (the notation there is ${}_\Delta H^c_k$).
\begin{theorem}
[Poincar\'e--Lefschetz, Theorem~\ref{cor-iso-ho-coho}, cf.~Theorem\,12.1.3 in \cite{Cu13}]
\label{thm-lefschetz}
For a constructible sheaf $\shF$ on a compact oriented PL topological $n$-manifold $B$, there are natural isomorphisms $H_k(B,\partial B;\shF)\stackrel\sim\ra H^{n-k}(B,\shF)$ and
$H_k(B,\shF)\stackrel\sim\ra H^{n-k}(B,\partial B;\shF)$ for each $k$.
\end{theorem}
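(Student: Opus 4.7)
The plan is to deduce both isomorphisms simultaneously from cap product with a fundamental class $[B,\partial B]\in H_n(B,\partial B;\ZZ)$ coming from the PL orientation. First I would set up the cap product pairings
\[
\cap\,[B,\partial B]\colon H^k(B,\shF)\lra H_{n-k}(B,\partial B;\shF)
\quad\text{and}\quad
\cap\,[B,\partial B]\colon H^k(B,\partial B;\shF)\lra H_{n-k}(B,\shF).
\]
To define the pairing I would pick a PL triangulation $K$ of $B$ sufficiently fine so that $\shF$ is constant on the open star of each simplex; this is possible because $\shF$ is constructible. On such a triangulation both $H^\bullet(B,\shF)$ and Bredon's $H_\bullet(B,\shF)$ admit explicit simplicial (co)chain models with coefficients in the stalks of $\shF$ at the vertices, and the cap product is given by the classical front-face/back-face formula, extended coefficient-wise using the constancy of $\shF$ on stars.

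Next I would verify that $\cap\,[B,\partial B]$ is an isomorphism by a Mayer-Vietoris induction on a finite PL-open cover. The base case is a small open PL ball, or a half-ball at the boundary, on which $\shF$ is constant on a contractible space and both cohomology and homology reduce to a single copy of the stalk; the pairing is then the tautological identification with the local fundamental class. The inductive step applies the five-lemma to the Mayer-Vietoris sequences for $H^\bullet(\,\cdot\,,\shF)$ and $H_\bullet(\,\cdot\,,\shF)$, once one has checked that the cap product commutes with the connecting homomorphisms up to a sign. Naturality of the construction in $\shF$ then yields both formulas at once, the second version being obtained by swapping the roles of $\partial B$ and the empty boundary in the local model, or equivalently by working on the double $B\cup_{\partial B}B$.

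The main obstacle is the sheaf-theoretic compatibility check underlying the cap product itself. Bredon's $H_k(B,\shF)$ is defined not as the homology of simplicial sections of $\shF$, but as the homology of a singular chain complex with coefficients twisted by $\shF$. One therefore has to prove that on a sufficiently fine PL triangulation adapted to the stratification of $\shF$, Bredon's complex is quasi-isomorphic to the explicit simplicial model used above, and that the resulting cap product is invariant under subdivision and independent of the chosen triangulation. I expect these coherence verifications, rather than the Mayer-Vietoris induction, to form the technical core of the appendix; once they are in place, the duality statement follows by essentially formal manipulations.
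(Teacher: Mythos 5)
Your plan — cap product with a PL fundamental class, then a Mayer--Vietoris induction — is the classical Bredon route, and it is genuinely different from what the paper does. The paper never induces over a cover; it builds a filtered \v{C}ech complex (with respect to the open stars of maximal cells), identifies each graded piece via local cohomology $H^\bullet_{B_\tau^\circ}(B_\tau,\ZZ)$ of a transverse ball (Lemma~\ref{lemma-gradedcoho}), shows the resulting spectral sequence degenerates (Lemma~\ref{spec-seq-ho-coho}), and produces the duality isomorphism as an explicit chain map $f$ from the simplicial chain complex $C_\bullet(B,\partial B;\shF)$ to the $E_1$-page (Proposition~\ref{prop-map-ho-coho}). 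The cap product only appears afterward, in Corollary~\ref{PLmap-is-the-same}, where agreement with Bredon's map is deduced abstractly by the uniqueness theorem for cohomological $\delta$-functors, not by a direct verification.

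There is, however, a concrete gap in your proposal. You assert that by refining the triangulation you can make $\shF$ \emph{constant} on the open star of every simplex; this is false for a genuinely non-constant constructible sheaf. Already on $[0,1]$ with the stratification $\{0\}\cup(0,1)\cup\{1\}$ and a nontrivial generization map at $0$, the open star of $0$ meets the open stratum and $\shF$ is not constant there, no matter how fine the triangulation. What refinement does give you (and what the paper proves as Lemma~\ref{baryacyclic}) is \emph{acyclicity} on closed simplices of the barycentric subdivision, i.e.\ $H^i(\tau,j_\tau^{-1}\shF)=0$ for $i>0$, which is weaker than constancy. This misstatement propagates into the base case of your Mayer--Vietoris induction: a small PL ball around a point in a low-dimensional stratum carries a constructible, not constant, sheaf, so the base case is not ``a single copy of the stalk with the tautological identification'' but exactly the local duality computation the paper packages into Lemma~\ref{lemma-gradedcoho}. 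If you replace ``constant'' by ``$\P$-acyclic'' throughout, redo the base case as a local computation for a constructible sheaf on a star (reducing, as the paper does, to the cohomology of a sphere $K\setminus\tau\simeq S^{\codim\tau-1}$ via local cohomology), and check the sign/connecting-map compatibilities, a version of your induction should go through; but as written the key local input is missing.

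Your last paragraph is well-aimed: the identification of Bredon's singular sheaf homology with a simplicial chain model for constructible sheaves \emph{is} a substantial part of the appendix. The paper proves it as Theorem~\ref{simplicial=singular}, via the filtration $\shF^k=(j_k)_!\,j_k^{-1}\shF$ by co-skeleta and a d\'evissage to the constant case on each stratum, rather than by subdivision-invariance of a cap product formula.
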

In our example, the theorem gives a canonical isomorphism
$H_1(B,\iota_*\Lambda) = H^1(B,\iota_*\Lambda)$.
By Theorem~\ref{thm-pairings-agree} below, the isomorphism in Theorem~\ref{thm-lefschetz} is such that \eqref{eq-bil-form} re-identifies as the cap product pairing
\begin{equation}\label{main-pairing}
H_1(B,\iota_*\Lambda)\otimes H^1(B,\iota_*\check\Lambda)\ra\ZZ
\end{equation}
given by pairing $\check\Lambda$ with $\Lambda$ and homology with cohomology, see Lemma~\ref{lemma-cap-zero}.
The appeal of \eqref{main-pairing} is that it is valid as stated also in higher dimensions and other degrees, see~\eqref{eq-gen-pairing} below.

To make even better use of \eqref{main-pairing}, we are going to make $\Lambda$ intrinsic to $B$. 
In our motivating example, Hyperk\"ahler rotation turns $f\colon X\ra B$ into a Lagrangian fibration. 
By the Liouville–Arnold theorem and the notion of action-angle coordinates \cite{Ar78,D}, integrating the symplectic form over the cylinders swept out by a pair of generators of $\Lambda_b=H_1(f^{-1}(b),\ZZ)$ as we move $b\in B\setminus \Delta$ gives a system of integral affine coordinates on $B\setminus \Delta$. In other words, we obtain an embedding of $\Lambda$ as a frame in the tangent sheaf $\shT_{B\setminus \Delta}$. Likewise, $\check\Lambda$ frames $\shT^*_{B\setminus \Delta}$.
The 2-torus-bundle $\shT^*_{B\setminus \Delta}/\check \Lambda$ over $B\setminus \Delta$ is symplectomorphic to the restriction of $X\ra B$ to $B\setminus \Delta$, up to twisting by the Chern class of the bundle, see~\cite{D}. 
Hence, the symplectic topology of $X\ra B$ away from the discriminant can be reconstructed from knowing the affine structure on ${B\setminus \Delta}$.
With some control/restriction on the singularities of the affine structure along $\Delta$, the entire topology of $X\ra B$ can be constructed from $B$, \cite{topMS,CBM09,Pr18,Pr19}.
We will therefore study the pairing \eqref{main-pairing} in the setting of spaces given by the following definition.

\begin{definition} \label{def-affmfd}
An {\bf integral affine manifold with singularities} is a PL topological manifold $B$ together with an atlas on a dense open set with transition maps in $\GL_n(\ZZ)\ltimes\RR^n$. Let $\Delta$ denote the complement of the open set and $\iota\colon B\setminus\Delta\hra B$ the inclusion. 
We require that the pair $(B,\Delta)$ is locally PL homeomorphic to $(\RR^n,\Sigma)$ for $\Sigma$ a finite fan of polyhedral cones with support in codimension 2.

We obtain a local system of integral tangent vectors $\Lambda\subset \shT_{B\setminus \Delta}$ on $B\setminus\Delta$ by picking a full lattice in a single stalk and then parallel\footnote{We remark here that we use the ordinary flat linear connection. An affine manifold also has a flat affine connection by pulling back the affine connection via charts from affine space, see~\cite[\S1]{logmirror1} or \cite{GH84}.} translating it around. We denote its dual by $\check\Lambda=\Hom(\Lambda,\ZZ)$. 
\end{definition}
The main purpose of this article is to make a statement under which conditions the pairing \eqref{main-pairing} is perfect. This turns out to be rather sensitive to the type of singularities along $\Delta$, see Counter-Example~\ref{counterexample-conifold}.
We already motivated the notion of {\bf symple}\footnote{We say \emph{symple} for the distinction to the similar notion of \emph{simple} given in \cite[Definition\,1.60]{logmirror1}. Most importantly: \emph{simple implies symple}.} singularities in \eqref{eq-simple-iso} and a general definition is given in \S\ref{sec-simple} below. 
Roughly speaking, locally $\Delta$ needs to be a transverse union of products of tropical hyperplanes.
{\bf Focus-focus points} \cite{Wi},\cite{D},\cite[Definition\,4.2]{Sy01}, see Figure~\ref{fig-focusfocus}, are examples of symple singularities. 
Our main result is the following.
\begin{theorem}[Theorem~\ref{pairingthm} below] \label{main-thm-intro}
If $B$ is an integral affine manifold with symple singularities, then the pairing \eqref{main-pairing} is perfect over $\QQ$.
\end{theorem}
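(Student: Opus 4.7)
The plan is to use Theorem~\ref{thm-lefschetz} (Poincar\'e--Lefschetz) together with Theorem~\ref{thm-pairings-agree} to recast the cap product \eqref{main-pairing} as the cup product pairing
$$H^{n-1}(B,\iota_*\Lambda)\otimes H^1(B,\iota_*\check\Lambda)\ra H^n(B,\ZZ)=\QQ$$
induced by the tautological evaluation $\iota_*\Lambda\otimes\iota_*\check\Lambda\ra\ZZ_B$, and then to deduce perfectness of this cup product from a local-to-global argument controlled by the symple hypothesis along $\Delta$. By Verdier duality on the compact oriented PL manifold $B$, perfectness over $\QQ$ is equivalent to the adjoint morphism $\iota_*\check\Lambda_\QQ\ra R\shHom(\iota_*\Lambda_\QQ,\QQ_B)$ being a quasi-isomorphism on stalks in the degrees contributing to $H^1$ and $H^{n-1}$.

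I would verify this stalk quasi-isomorphism pointwise. On $B\setminus\Delta$ the sheaves $\Lambda$ and $\check\Lambda$ are dual local systems on a smooth manifold and ordinary Poincar\'e duality applies, so the issue is concentrated at points $x\in\Delta$. At such an $x$, the symple hypothesis of \S\ref{sec-simple}, combined with the local model of Definition~\ref{def-affmfd}, provides a contractible neighborhood $U_x$ on which $\Delta$ is a transverse union of products of tropical hyperplanes governed by the fan $\Sigma$. Computing the stalks of $R\iota_*\Lambda_\QQ$ and $R\iota_*\check\Lambda_\QQ$ at $x$ then becomes a tropical-combinatorial exercise that I would carry out by decomposing the link of $x$ in $B\setminus\Delta$ according to $\Sigma$ and running Mayer--Vietoris on that link. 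The duality between the monodromies of $\Lambda$ and $\check\Lambda$ around each irreducible component of $\Delta$, together with the symple gluing condition, should yield the stalkwise nondegeneracy; the failure of precisely this nondegeneracy in the non-symple setting is what Counter-Example~\ref{counterexample-conifold} records.

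To globalize, I would assemble a \v{C}ech spectral sequence from a cover of $B$ by the contractible symple neighborhoods $\{U_x\}_{x\in\Delta}$ together with an open $V\subset B\setminus\Delta$ trivializing $\Lambda$ and $\check\Lambda$: ordinary Poincar\'e duality handles $V$ and its intersections with the $U_x$, the previous paragraph handles the $U_x$ themselves, and compatibility of the cup product with the spectral sequence propagates the perfectness from the $E_2$-page to the abutment.

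The main obstacle is the local stalk calculation. Focus-focus singularities in dimension two already yield the classical unimodular form on a fibred K3 surface recalled in the introduction, but in higher dimensions the tropical arrangements arising from symple singularities are intricate, and matching the $\Lambda$/$\check\Lambda$ monodromy contributions so as to produce a nondegenerate pairing is the heart of the argument. Rational coefficients are used to suppress torsion artefacts that could otherwise obstruct spectral sequence degeneration, while the symple hypothesis itself is what rules out the more serious cohomological obstructions illustrated by Counter-Example~\ref{counterexample-conifold}.
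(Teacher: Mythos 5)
Your proposal outlines a strategy via Verdier duality---recasting perfectness as a stalkwise quasi-isomorphism $\iota_*\check\Lambda_\QQ\ra R\shHom(\iota_*\Lambda_\QQ,\QQ_B)$ and then checking stalks at points of $\Delta$---but this is precisely the route the paper warns against. Right after the statement of Theorem~\ref{pairingthm} the paper notes explicitly that ``One would usually deduce perfectness from the sheaf level, i.e.~use the isomorphism $\Lambda\ra\Hom(\check\Lambda,\ZZ)$. The difficulty is that taking $\iota_*$ messes this up.'' Indeed, $\iota_*\Lambda\ra\Hom(\iota_*\check\Lambda,\ZZ)$ is the zero map at a focus-focus point, and the introduction states that the theorem ``does not just follow from linear duality \ldots because of the non-derived $\iota_*$ operation.'' Beyond this, your reduction to ``the adjoint morphism being a quasi-isomorphism on stalks in the degrees contributing to $H^1$ and $H^{n-1}$'' is not an equivalence: perfectness of one graded piece of a cup product pairing does not localize to a stalkwise statement in a fixed cohomological degree, and if the stalkwise quasi-isomorphism you posit did hold it would yield perfectness of \eqref{eq-gen-pairing} in \emph{all} degrees $(p,q)$, which the paper explicitly states it could not prove and only conjectures.

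The substantive gap, which you yourself flag (``the main obstacle is the local stalk calculation \ldots is the heart of the argument''), is that the entire local computation is left undone. The paper's actual mechanism is quite different: it avoids any sheaf-level duality statement for $\iota_*\Lambda$ and instead constructs the pairing via Lemma~\ref{iso-via-clstars}, which produces a map from homology to the dual of cohomology by working with the \emph{homology of closed stars} $H_0(\P_\tau,A)$ rather than a morphism of sheaves. The key input making that lemma deliver an isomorphism in degrees $q\le 1$ is the vanishing $H_1(|\P_\tau|,\iota_*\Lambda_\QQ)=0$ (Proposition~\ref{hypothesis-for-lemma}), which in turn rests on the local vanishing $H^1(U\setminus\{p\},\iota_*\Lambda_\QQ)=0$ (Theorem~\ref{H1-of-punctured-is-zero}). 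That theorem is proved by a careful analysis of the auxiliary constructible sheaves $G,Q,\check G,S$ on the symple model and a combinatorial computation via the complexes $\check C^\bullet,\check{\bar D}^\bullet$---this is where the symple hypothesis (products of tropical hyperplanes) really enters, and it is tuned precisely to degree one. Your Mayer--Vietoris sketch on the link of a singular point does not engage with this structure, and the conifold Counter-Example~\ref{counterexample-conifold} shows that the non-derived pushforward is subtle enough that one cannot hope for the naive stalkwise duality to hold.
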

The theorem \emph{does not} just follow from linear duality, e.g.~\cite[\S12.2.1]{Cu13}, because of the non-derived $\iota_*$ operation.

\subsection{Analyticity, versality and periods for canonical Calabi--Yau families}
We next explain how Theorem~\ref{main-thm-intro} enables the proof of the analyticity and versality of canonical Gross--Siebert Calabi--Yau families.
Recall from \cite{affinecomplex,theta} that, given an integral affine manifold with simple singularities $B$ and additionally a polyhedral decomposition $\shP$ with multivalued strictly convex function $\varphi$, Gross and Siebert associate to the triple $(B,\shP,\varphi)$ a canonical formal family 
\begin{equation} \label{GS-family}
\foX\ra\Spf\underbrace{\CC[H^1(B,\iota_*\check\Lambda)^*]}_{=:A}\lfor t\rfor
\end{equation} 
where $H^1(B,\iota_*\check\Lambda)^*:=\Hom(H^1(B,\iota_*\check\Lambda),\ZZ)$. The first Chern class of $\varphi$ is an element $c_1(\varphi)\in H^1(B,\iota_*\check\Lambda)$ and hence can be paired with a cycle $\beta\in H_1(B,\iota_*\Lambda)$ under \eqref{main-pairing} to give an integer $\langle c_1(\varphi),\beta\rangle$. 
Furthermore, \eqref{main-pairing} also gives a map $H_1(B,\iota_*\Lambda) \ra H^1(B,\iota_*\check\Lambda)^*,\beta\mapsto \beta^*$. 
\begin{theorem}[\cite{RS20}] \label{GR-theorem}
The (Laurent) monomial $z^{\beta^*}t^{\langle c_1(\varphi),\beta\rangle}\in A[t^{\pm 1}]$ obtained from $\beta\in H_1(B,\iota_*\Lambda)$ is an exponentiated period integral for the family \eqref{GS-family}, namely
$$z^{\beta^*}t^{\langle c_1(\varphi),\beta\rangle}=\exp\left(\frac1{(2\pi i)^{n-1}}\int_{r_{1,1}(\beta)}\Omega_{\foX/\Spf A\lfor t\rfor}\right)$$
noting that the map $r_{1,1}\colon H_1(B,\iota_*\Lambda)\ra H_n(X,\ZZ)/\im r_{0,0}$ is given in \eqref{eq-map-rpq} below. 
\end{theorem}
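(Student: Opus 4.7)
The plan is to decompose the period integral into local contributions on toric charts of the Gross--Siebert formal family and assemble them via the cap-product description of $c_1(\varphi)$. Recall that \eqref{GS-family} is built by gluing toric pieces indexed by cells of $\shP$: over a chart $U\subset B\setminus\Delta$ on which $\Lambda$ is trivial, the restriction of $\foX$ is (up to étale cover) a deformation of $\Spec A\lfor t\rfor[\check\Lambda|_U]$, and the canonical relative form $\Omega_{\foX/\Spf A\lfor t\rfor}$ restricts there to $\bigwedge_{i=1}^n d\log z_i$ for an affine frame $z_1,\ldots,z_n$ dual to the frame $e_1,\ldots,e_n$ of $\Lambda|_U$.

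Next, I would realize $r_{1,1}(\beta)$ explicitly. Represent $\beta$ by a PL $1$-cycle $\gamma\subset B\setminus\Delta$ carrying a locally constant decoration $v\in\Lambda|_\gamma$. Over each chart the smooth nearby fiber is topologically $T^n\cong \shT^*_U/\check\Lambda$, and the codimension-one subtorus $T_v\subset T^n$ annihilated by $v$ is intrinsic to $v$. The sweep $\bigcup_{b\in\gamma} T_{v(b)}$ is an $n$-chain in a generic fiber; the passage through the quotient by $\im r_{0,0}$ absorbs the ambiguity in closing this chain when $\gamma$ approaches the singular locus, so $r_{1,1}(\beta)\in H_n(X,\ZZ)/\im r_{0,0}$ is well defined.

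The local period computation is then Fubini. With $v=\sum v_i e_i$ in the affine frame,
\[
\int_{T_v\times (\gamma\cap U)} \bigwedge_{i=1}^n d\log z_i \;=\; (2\pi i)^{n-1}\int_{\gamma\cap U} \sum_i v_i\, d\log z_i.
\]
After dividing by $(2\pi i)^{n-1}$ and exponentiating, the local contributions telescope via the standard \v{C}ech description of the affine structure into the monomial $z^{\beta^*}$, with $\beta^*\in H^1(B,\iota_*\check\Lambda)^*$ precisely the class obtained by pairing $\beta$ against cohomology under \eqref{main-pairing}, see Lemma~\ref{lemma-cap-zero}. This produces the asserted $z$-exponent.

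The main obstacle and the heart of the argument from \cite{RS20} is controlling the $t$-direction. When $\gamma$ crosses a codimension-one wall of $\shP$, the change-of-chart isomorphism for $\foX$ includes a factor $t^{\langle n_\rho,\,\cdot\,\rangle}$ where $n_\rho\in\check\Lambda$ records the slope-jump of $\varphi$ across that wall; together these $n_\rho$ form a \v{C}ech $1$-cocycle representing $c_1(\varphi)\in H^1(B,\iota_*\check\Lambda)$. Tracking the $t$-contributions along $\gamma$ assembles, by the very definition of the cap product in \eqref{main-pairing}, into the integer $\langle c_1(\varphi),\beta\rangle$. The delicate point is that the Gross--Siebert construction a priori introduces higher-order wall-crossing corrections (scattering instantons); one must show these contribute trivially to the period of the \emph{closed} cycle $r_{1,1}(\beta)$. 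This is where consistency of the scattering diagram enters, and it is the technical crux whose detailed execution occupies \cite{RS20}.
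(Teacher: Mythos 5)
Theorem~\ref{GR-theorem} is not proved in this paper; it is attributed to and quoted from \cite{RS20}, so there is no in-text argument to compare against. Judged on its own terms, your outline captures the right shape of the argument: restrict $\Omega_{\foX/\Spf A\lfor t\rfor}$ to $\bigwedge_i d\log z_i$ on toric charts, realize $r_{1,1}(\beta)$ as a swept $(n-1)$-subtorus over a decorated $1$-cycle $\gamma$, apply Fubini to peel off $(2\pi i)^{n-1}$, and read the $z$- and $t$-exponents off the \v{C}ech cocycles for the affine structure and for $c_1(\varphi)$ respectively. This is consistent with the construction of $r_{p,q}$ referenced in \eqref{eq-map-rpq} and with the cap-product reading of the pairing in Lemma~\ref{lemma-cap-zero}.

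It is, however, a roadmap rather than a proof, and you concede as much. First, you explicitly defer the central technical point --- showing that the higher-order scattering/wall-crossing corrections in the Gross--Siebert gluing data contribute trivially to the period of the closed cycle $r_{1,1}(\beta)$ --- to \cite{RS20} without argument, and that is where the real content of the theorem lies; a consistency statement about the scattering diagram must be invoked and then actually used in the telescoping. Second, you do not address a point flagged in the paper immediately after the theorem: $\foX$ is a formal scheme over $\Spf A\lfor t\rfor$, not an honest family of complex manifolds, so the period integral $\int_{r_{1,1}(\beta)}\Omega_{\foX/\Spf A\lfor t\rfor}$ is not \emph{a priori} meaningful. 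Making sense of this integral order by order in $t$ --- in effect reconciling the topological compactification $X$ on which $r_{1,1}(\beta)$ lives with the formal scheme $\foX$ --- is a nontrivial part of the statement, and any proof must resolve it before the local Fubini computation can even be posed. Without these two ingredients your proposal outlines the result plausibly but does not prove it.
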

Taking the exponential deletes the ambiguity of adding elements in $\im r_{0,0}$. In view of the difference $X$ versus $\foX$, making sure the integral in Theorem~\ref{GR-theorem} is well-defined, even at finite $t$-order, is a non-trivial part of the statement of Theorem~\ref{GR-theorem}.
In this context, Theorem~\ref{main-thm-intro} ensures that the period monomials (with $\langle c_1(\varphi),\beta\rangle\ge 0$) in Theorem~\ref{GR-theorem} generate a codimension one subring of $A[t]$. Since the missing dimension is explained by a natural equivariant $\CC^*$-action on domain and co-domain of \eqref{GS-family}, using that periods are always holomorphic functions, it was proved in \cite{RS20} that \eqref{GS-family} is (locally in the base) the completion of an analytic family and furthermore log semi-universal. The families of the form \eqref{GS-family} appear as mirror symmetry duals, so the gain is that the mirror dual is not just a formal scheme but an honest complex manifold (or possibly an orbifold if $\dim B\ge 4$).
In the situation where $\eqref{GS-family}$ comes with an embedding in an ambient toric variety, Yamamoto gave a cup product interpretation of the period integral in Theorem~\ref{GR-theorem} and the general pairing \eqref{eq-gen-pairing} below in terms of the polarized log Hodge structure induced by the ambient cohomology, see \cite[Definition-Lemma 1.2 and Remark 7.2]{Ya}.

\subsection{Tropical cycles and ordinary cycles}
The construction of $r_{1,1}$ in the previous section goes back to Siebert's ETH talk\footnote{``Canonical coordinates in mirror symmetry and tropical disks'' at the conference \emph{Symplectic Geometry and Physics}, Z\"urich, Sep 5, 2007} and is motivated from the $\dim B=2$ case given by Symington \cite{Sy01}, see~Figure~\ref{fig-goggle-to-relative} below.
We call the homology groups 
$$H_{p,q}:=H_{q}(B,\iota_*\bigwedge^p\Lambda)$$
\emph{homology of tropical cycles} or \emph{affine homology} and we call representatives of $H_{p,q}$ \emph{tropical $p,q$-cycles} or if $p=q$ just \emph{tropical $p$-cycles}. 
The work \cite{CBM13} features tropical $1,2$-cycles (in a relative version). Denoting $H^{p,q}:=H^{q}(B,\iota_*\bigwedge^p\check\Lambda)$, we prove in Theorem~\ref{pairingthm} there is a natural pairing
\begin{equation} \label{eq-gen-pairing}
H_{p,q}\otimes H^{p,q} \ra\ZZ
\end{equation}
on an integral affine manifold with singularities which is compatible with the cap product pairing of dual local systems
$$H_q(B\setminus\Delta,\bigwedge^p\Lambda)\otimes H^q(B\setminus\Delta,\bigwedge^p\check\Lambda)\ra\ZZ.$$

For an affine manifold with singularities $B$ of dimension $n$, assume that $f\colon X\ra B$ is a compactification of the torus bundle $X^\circ:=\shT^*_{B\setminus \Delta}/\check \Lambda$ (by possibly singular fibers) 
so that $f$ permits a section.
Such compactifications are constructed in \cite{RZ20,RZ} where we also give a sequence of natural homomorphisms 
\begin{equation} \label{eq-map-rpq}
r_{p,q}\colon H_q(B,\iota_*\bigwedge^p\Lambda)\ra H_{n-p+q}(X,\ZZ)/\im r_{p-1,q-1}.
\end{equation}
These maps are given by an explicit construction of cycles. 
Setting $$W_{p+q}:=\im r_{p,q}+ \im r_{p-1,q-1}+ \im r_{p-2,q-2}+...,$$ we obtain an increasing filtration $W_\bullet$ of $H_{k}(X,\ZZ)$ for $k=n-p+q$. 
For $\dim B=3$, the Leray filtration of $f$ was shown to agree with $W_\bullet$ in \cite{Gr98}. We thus generalized the leading K3 example to arbitrary dimension.
In view of \cite[\S1.4.5]{RS20}, the perfectness of the pairing \eqref{main-pairing} is equivalent to saying that $r_{1,1}$ induces an isomorphism
$H_{1,1}\cong W_2/W_0$ over $\QQ$.

\subsection{Tropical (co)homology}
The homology of tropical cycles $H_{p,q}$ is naturally isomorphic to $H^{n-q}(B,\iota_*\bigwedge^p\Lambda)$ by Theorem~\ref{thm-lefschetz}. 
These latter cohomology groups are known by the name \emph{affine cohomology}. 
They have an important application in the theory of toric log Calabi-Yau spaces as follows. 
A toric log Calabi-Yau space $X$ is a certain type of maximally degenerate Calabi-Yau variety introduced in \cite{logmirror1}. 
It appears as the central fibre in \eqref{GS-family}.
These spaces come with natural cohomology groups, the log Hodge groups $H^q(X,\Omega^p)$ that were introduced in \cite{logmirror2}. It was proved in 
\cite{logmirror2,Ru} that there is a natural injection 
\begin{equation} \label{affine-to-Hodge}
H^{q}(B,\iota_*\bigwedge^p\Lambda)\ra H^q(X,\Omega^p)
\end{equation}
with precise understanding of the cokernel. A criterion for \eqref{affine-to-Hodge} to be bijective was given in \cite[Theorem 3.21]{logmirror2}.
The log Hodge groups are relevant because they are isomorphic to the actual Hodge groups whenever $X$ appears in a family with smooth or orbifold nearby fibres, e.g. in the family \eqref{GS-family}, see \cite{logmirror2,smoothtor}. 
For our introductory K3 example, the relationship of $H_{1,1}$ to the Picard group was discussed in \cite[\S1.4.3]{RS20}, see also \cite{logmirror1}. 

There is a related notion of \emph{tropical (co)homology} due to \cite{IKMZ} for which a correspondence result like \eqref{affine-to-Hodge} has also been proved. 
The context here is that, instead of having $B$ as a topological manifold, \cite{IKMZ} replace $B$ by a tropical variety $V$ in $\RR^m$. 
A tropical variety $V$ is naturally an integral affine manifold on a dense open set of $V$, namely the union of the interiors of all maximal polyhedra that $V$ consists of. Hence, the sheaf $\Lambda$ is defined on this open set as before. 
However, the singularities of $V$ are a lot more severe compared to Definition~\ref{def-affmfd}.
The correct replacement of $\iota_*\bigwedge^p\Lambda$ in the situation of a tropical variety $V$ is denoted $^\ZZ\shF_p$ in \cite{IKMZ}. 
The (co)homology groups of $^\ZZ\shF_p$ are called tropical (co)homology.
The Picard group as well as a pairing similar to \eqref{main-pairing} has been understood in this context in \cite[Definition 5.2 and Theorem 5.3]{JRS}. 
In the special situation where $V$ a tropical Calabi--Yau hypersurface, it is possible to collapse its unbounded parts and obtain an integral affine manifold with singularities in our sense, see \cite[\S3]{Ya}. The relation between the two notions of tropical versus affine homology is not yet properly understood.

Beyond Calabi--Yau tropical varieties, there should yet be another relationship between tropical homology and affine homology:
given a general tropical hypersurface $V$ in $\RR^m$, we may place an integral affine structure on $B:=\RR^m\times\RR$ that makes $V\times\{0\}$ be the discriminant $\Delta$ by a construction similar to Construction~\ref{construction-symple-model}.
We expect that the resulting $(B,\Delta)$ satisfies the aforementioned criterion in \cite[Theorem 3.21]{logmirror2} if and only if $V$ satisfies the smoothness notion in \cite{IKMZ}.

A sheaf-theoretic approach to \cite{IKMZ} was recently given in \cite{GSh} and refined Hodge theoretic properties proved in \cite{AP20}.

\subsection{Tropical intersection theory}
Another relevant application of the pairing \eqref{main-pairing} is by means of \emph{tropical intersection theory}, a subject that has already been studied for tropical subvarieties in $\RR^m$ in \cite{AR10} and for tori in \cite[\S6.2]{eigenwave}
 but not for tropical cycles in a general integral affine manifold with singularities $B$.
Given two tropical subvarieties $V,W\subset \RR^m$ of complementary dimension in general position, it is straightforward to define their intersection number $V.W\in\ZZ$ because this amounts to locally intersecting integrally spanned oriented linear spaces of complementary dimension. It is much less obvious that the resulting number is independent of translating  $V$ or $W$, let alone deformation. This problem was solved in \cite[Proposition 9.11]{AR10} when $B=\RR^m$. We prove the well-definedness of the intersection number for more general $B$, a result that has already been used in \cite{NOR,tropLag}.

\begin{definition} 
\label{def-int-prod}
Let $B$ be a compact integral affine $n$-manifold with singularities and $\Omega$ a primitive global section of $\iota_*\bigwedge^n\Lambda$, i.e.~$B$ is oriented. 
Set $B^\circ = B\setminus\partial B$ and $H^\partial_{p,q}:=H_q(B,\partial B;\iota_*\bigwedge^{p}\Lambda)$.
Let $$D\colon H_{p,q}\ra H^{n-q}_c(B^\circ,\iota_*\bigwedge^{p}\Lambda)\qquad\hbox{ and }\qquad
D^\partial\colon H^\partial_{n-p,n-q}\ra H^{q}(B,\iota_*\bigwedge^{n-p}\Lambda)$$ 
denote the Poincar\'e--Lefschetz isomorphisms of Theorem~\ref{thm-lefschetz}. 
For $\alpha\in H_{p,q}$ and $\beta\in H^\partial_{n-p,n-q}$ we define\footnote{Some authors invert the order of $\alpha,\beta$, see e.g.~\cite[VI-Example~11.12]{Br93}.} the bilinear intersection product $\alpha\cdot\beta\in\ZZ$ by
$$ \alpha\cdot\beta := D(\alpha)\cup D^\partial(\beta)\in H^{n}_c(B^\circ,\iota_*\bigwedge^{n}\Lambda)\underset\Omega\cong\ZZ.$$
\end{definition}

For two oriented subspaces $V,W$ in an oriented vector space $U$ with $V\oplus W=U$, we define the sign $\eps(V,W)\in\{-1,1\}$ by 
$$\orr_V\wedge \orr_W=\eps(V,W)\orr_U$$ where $\orr_V,\orr_W,\orr_U$ denote the orientations of $V,W,U$ respectively. More generally, we set $\eps(V,W)=0$ if $V\oplus W\ra U$ is not an isomorphism.
\begin{theorem} 
\label{thm-intersect}
In the situation Definition~\ref{def-int-prod}, assume we are given cycles $V\in H_{p,q}$, $W\in H^\partial_{n-p,n-q}$ that meet transversely in a finite set of points $V\cap W$ which is disjoint from $\Delta$ and $\partial B$. 
Assume further that each point lies in the interior of a maximal cell of $V,W$ respectively with a well-defined tangent space at the point, then the following integers coincide
\begin{enumerate}
\item The intersection product $V\cdot W$,
\item The image of $V\otimes D^\partial(W)$ under the pairing \eqref{eq-gen-pairing} after inserting the isomorphism $\iota_*\bigwedge^{n-p}\Lambda\ra \iota_*\bigwedge^{p}\check\Lambda$ given in Lemma~\ref{lemma-sheaf-pairing-perfect}.
\item The intersection number
$$\sum_{x\in V\cap W} \eps(T_{V,x},T_{W,x}) \frac{\xi^V_x\wedge \xi^W_x}\Omega$$
where $\xi^V_x\in \bigwedge^p\Lambda_x$,~$\xi^W_x\in \bigwedge^{n-p}\Lambda_x$ denotes the coefficient of $V,W$ at $x$ respectively and $\frac{\xi^V_x\wedge \xi^W_x}\Omega$ is the integer $k$ so that $\xi^V_x\wedge \xi^W_x=k\Omega_x$.
\end{enumerate}
\end{theorem}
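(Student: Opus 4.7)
The plan is to establish $(1)=(2)$ via the abstract compatibility of cap product with Poincaré--Lefschetz duality, and then reduce $(2)=(3)$ to a purely local computation at each intersection point, where the answer can be read off from linear algebra in a single affine chart.

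For $(1)=(2)$, I would invoke Theorem~\ref{thm-pairings-agree} (announced earlier in the introduction), which identifies \eqref{eq-gen-pairing} with the cap product pairing after composing with the sheaf isomorphism $\iota_*\bigwedge^{n-p}\Lambda \cong \iota_*\bigwedge^{p}\check\Lambda$ of Lemma~\ref{lemma-sheaf-pairing-perfect}. The standard identity $D(\alpha \cap \gamma) = D(\alpha) \cup \gamma$ for Poincaré--Lefschetz (applied with $\alpha = V$ and $\gamma = D^\partial(W)$) then gives
\[
V\cdot W \;=\; D(V)\cup D^\partial(W) \;=\; \langle V,\, D^\partial(W)\rangle,
\]
which is $(2)$ after tracing through the sheaf isomorphism in Definition~\ref{def-int-prod}.

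For $(2)=(3)$, I would use excision to localize around the finite set $S:=V\cap W$. Since $S$ is disjoint from $\Delta$ and $\partial B$, I can choose pairwise disjoint open neighborhoods $U_x$ of each $x\in S$ contained in a single integral affine chart on which $\Omega$ is the standard volume form $dx_1\wedge\cdots\wedge dx_n$. The cap product pairing factors through the restriction to $\bigsqcup_x U_x$, so it splits as a sum of local contributions. At a fixed $x$, the hypothesis that $x$ lies in the interior of a maximal cell of $V$ (resp.~$W$) with well-defined tangent space means that $V|_{U_x}$ is represented by an oriented affine disk in the translate of $T_{V,x}$ with constant coefficient $\xi^V_x\in\bigwedge^p\Lambda_x$, and similarly for $W|_{U_x}$. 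The local Poincaré--Lefschetz dual $D^\partial(W|_{U_x})$ is then the Thom class of this affine disk tensored with $\xi^W_x$. Transversality $T_{V,x}\oplus T_{W,x}=T_xB$ reduces the local cup/cap evaluation to a product of two contributions: the topological local intersection index of the oriented linear subspaces, which by definition of $\eps$ is $\eps(T_{V,x},T_{W,x})\in\{-1,0,+1\}$, and the sheaf pairing $\xi^V_x\otimes\xi^W_x\mapsto \xi^V_x\wedge\xi^W_x\in \bigwedge^n\Lambda_x$, normalized by $\Omega$ to yield the integer $\xi^V_x\wedge\xi^W_x/\Omega$. Summing over $x\in S$ gives exactly $(3)$.

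The main obstacle I expect is the careful sign bookkeeping in the local step: one must verify that the orientation conventions chosen for $D$ and $D^\partial$ in Theorem~\ref{thm-lefschetz} (together with the sheaf isomorphism $\iota_*\bigwedge^{n-p}\Lambda\cong\iota_*\bigwedge^p\check\Lambda$ that uses $\Omega$) produce precisely $\eps(T_{V,x},T_{W,x})$, as opposed to $\eps(T_{W,x},T_{V,x})$ or some permutation sign from reordering wedge factors. I would pin this down in a single model example, say $p=n$ with $V$ a point and $W=B$, where all three quantities trivially compute to $\xi^V_x/\Omega$; the general case then follows from naturality and bilinearity, combined with the fact that the sheaf pairing in Lemma~\ref{lemma-sheaf-pairing-perfect} is by construction the wedge normalized against $\Omega$.
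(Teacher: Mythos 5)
Your proposal follows essentially the same route as the paper. The equality $(1)=(2)$ is, as you say, an immediate consequence of Theorem~\ref{thm-pairings-agree} (commutativity of \eqref{eq-pairings-agree}), and for $(2)=(3)$ the paper likewise localizes to disjoint neighborhoods of the intersection points disjoint from $\Delta$ and $\partial B$, where $V$ and $W$ are manifolds; the only organizational difference is that the paper first passes through Corollary~\ref{pairing-is-cap-product} to identify \eqref{eq-gen-pairing} with the cap product, and then cites Bredon directly (\cite[V-\S11]{Br97} and \cite[Theorem VI.11.9]{Br93}, enhanced to locally constant sheaf coefficients) rather than redoing the local Thom-class computation you sketch. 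Your local argument is a correct expansion of what those citations supply; you correctly flag the sign bookkeeping as the delicate point, and that is precisely what the Bredon references handle.
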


\begin{theorem}\label{thm-zharkov}
Assume $B$ is an integral affine manifold together with a polyhedral decomposition and multivalued strictly convex function. 
If $B$ is symple then \cite{RZ} gives a topological $2n$-orbifold $X$ with surjection to $B$ that compactifies the $n$-torus-bundle $T^*_{B\setminus\Delta}/\check\Lambda\ra B\setminus\Delta$. 
Let $V\in H_{p,q}$, $W\in H_{n-p,n-q}$ be as in Theorem~\ref{thm-intersect} and let $\beta_V\in H^{n-p+q}(X,\ZZ)$, $\beta_W\in H^{n+p-q}(X,\ZZ)$ be lifts of $r_{p,q}(V)$ and $r_{n-p,n-q}(W)$ respectively.
We have
$$V\cdot W=(-1)^{(n-p)(q-1)}\beta_V.\beta_W,$$
i.e. the intersection number of Definition~\ref{def-int-prod} agrees up the factor $(-1)^{(n-p)(q-1)}$ with the ordinary topological intersection number (which is the fundamental class coefficient of the cup product $\beta_V\cup \beta_W$).
\end{theorem}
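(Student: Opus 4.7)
The plan is to localize the topological intersection $\beta_V.\beta_W$ to the torus fibers of $f\colon X\ra B$ over the finite set $V\cap W$, compute the local contributions in each fiber torus, and match the resulting signed sum against the formula in Theorem~\ref{thm-intersect}(3) for $V\cdot W$; the sign $(-1)^{(n-p)(q-1)}$ will emerge from orientation bookkeeping.

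First I would recall from \cite{RZ} the explicit description of the cycles: over $B\setminus\Delta$, the class $\beta_V$ may be represented by a fiber bundle over a smooth model of $V$ whose fiber over $x$ is the oriented $(n-p)$-subtorus $T^V_x\subset T^*_xB/\check\Lambda_x$ with tangent space the annihilator $(\xi^V_x)^\perp$ inside $\check\Lambda_x\otimes\RR$, with orientation induced from $\xi^V_x$ via $\Omega_x$; analogously, $\beta_W$ is fibered over $W$ by the dual $p$-subtori $T^W_x$ associated to $\xi^W_x$. Using the ambiguity in the choice of lifts (a class in $\im r_{p-1,q-1}$ is represented by a cycle whose image on $B$ has dimension $q-1$, which after a generic perturbation is disjoint from the $(n-q)$-dimensional base $W$, and symmetrically for $\beta_W$), I would arrange that $\beta_V$ and $\beta_W$ meet transversely in $X$ with their intersection concentrated in fibers over the finite set $V\cap W\subset B\setminus(\Delta\cup\partial B)$.

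Now fix $x\in V\cap W$. The transversality hypothesis $\xi^V_x\wedge\xi^W_x\neq 0$ forces $V_0\oplus W_0=\Lambda_x\otimes\QQ$, so $T^V_x$ and $T^W_x$ have complementary dimension inside $T^n_x$ and meet in a finite set. A direct calculation with lattice bases, using the identification $\bigwedge^n\Lambda_x\cong\ZZ$ via $\Omega_x$, shows that the signed count of their intersection points in $T^n_x$ equals $\xi^V_x\wedge\xi^W_x/\Omega_x$ once $T^V_x$ and $T^W_x$ are oriented via $\xi^V_x$ and $\xi^W_x$ through $\Omega_x$ (compatibly with the identification of Lemma~\ref{lemma-sheaf-pairing-perfect}). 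The local contribution of each such fiber point to $\beta_V.\beta_W$ is then the product of this fiber sign, the base sign $\eps(T_{V,x},T_{W,x})$, and a permutation sign from reordering $\orr_{\beta_V}\wedge\orr_{\beta_W}=(\orr_V\wedge\orr_{T^V})\wedge(\orr_W\wedge\orr_{T^W})$ into the split form $(\orr_V\wedge\orr_W)\wedge(\orr_{T^V}\wedge\orr_{T^W})$ compatible with $\orr_X=\orr_B\wedge\orr_{\text{fiber}}$; the naive reordering alone contributes $(-1)^{(n-p)(n-q)}$. Summing over $x\in V\cap W$ and comparing with Theorem~\ref{thm-intersect}(3) yields the claimed identity.

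The hard part will be the final sign tally, which requires careful tracking of (i) the orientation conventions of \cite{RZ} for the fiber sub-tori of $\beta_V$ and $\beta_W$, (ii) the convention governing the identification $\iota_*\bigwedge^{n-p}\Lambda\cong\iota_*\bigwedge^p\check\Lambda$ of Lemma~\ref{lemma-sheaf-pairing-perfect}, and (iii) the naive permutation sign $(-1)^{(n-p)(n-q)}$, and verifying that these three pieces collapse precisely to $(-1)^{(n-p)(q-1)}$. The independence of $\beta_V.\beta_W$ from the choice of lifts $\beta_V,\beta_W$ is a subsidiary but necessary check, already handled by the perturbation argument above.
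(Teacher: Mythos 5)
Your localization strategy — slice $\beta_V.\beta_W$ into fiber contributions over $V\cap W$, identify each $\beta$ near an intersection point with the orbit of a section under the subgroup $\xi^\perp\subset T^*_U$, and split the sign into base, fiber, and reordering parts — is exactly the route the paper takes, including the reduction (implicit in your "lattice bases" step) to $\xi_V,\xi_W$ simple primitive so that $\xi_V^\perp$ is a genuine $(n-p)$-subtorus. You also correctly find the reordering factor $(-1)^{(n-p)(n-q)}$. However, you then declare the remaining sign tally "the hard part" and stop, and what you are missing is precisely the only nontrivial content of the theorem. The fiber contribution, under the orientation convention that orients $\xi_V^\perp$ (resp.\ $\xi_W^\perp$) by $\iota_{\xi_V}\check\Omega$ (resp.\ $\iota_{\xi_W}\check\Omega$), is not simply the sign of $\xi_V\wedge\xi_W/\Omega$: one must compare $\iota_{\xi_V}\check\Omega\wedge\iota_{\xi_W}\check\Omega$ against $\iota_{\xi_V\wedge\xi_W}\check\Omega\cdot\check\Omega$, and the interior-product calculus produces an extra $(-1)^{(n-p)p}$. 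Your sketch, as stated, suggests the fiber sign is directly $\xi_V\wedge\xi_W/\Omega$, which would give the incorrect total $(-1)^{(n-p)(n-q)}$; that differs from the claimed $(-1)^{(n-p)(q-1)}$ by $(-1)^{(n-p)p}$, which is not identically $1$ (e.g.\ $n=2,p=1$). The identity that closes the argument is $(-1)^{(n-p)(n-q)+(n-p)p}=(-1)^{(n-p)(q-1)}$, valid because $n-p$ and $n+p$ share parity so $(n-p)(n+p+1)$ is always even — that arithmetic is also part of the proof you did not supply. Until you produce the $(-1)^{(n-p)p}$ factor from the contraction calculus and verify the parity identity, the argument has a genuine gap rather than being a complete proof by the same method.
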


\begin{remark} 
In the situation of Theorem~\ref{thm-zharkov}, the Hodge groups on the right hand side of \eqref{affine-to-Hodge} are expected to inject into the orbifold de Rham cohomology $H^{p+q}_{orb}(X)$ of $X$, so in particular the affine Hodge groups inject into $H^{p+q}_{orb}(X)$.
One may speculate that Poincar\'e duality for $X$ combined with Theorem~\ref{thm-zharkov} leads to a proof of the perfectness of \eqref{eq-gen-pairing} (and another proof of Theorem~\ref{main-thm-intro}) under the hypothesis of Theorem~\ref{thm-zharkov}. In any event, the map into $H^{p+q}_{orb}(X)$ gives a new geometric interpretation for the pairing \eqref{main-pairing}.
\end{remark}

\subsection{Focus-focus points and goggles}
To see Theorem~\ref{thm-intersect} in action, we return to the motivational example of the K3 fibration. 
The affine structure of a neighborhood of the discriminant point of $\Delta$ at an $I_1$-fiber of an elliptic fibration features a \emph{focus-focus} singularity (\cite{Wi},\cite{D},\cite[Definition\,4.2]{Sy01},\cite[Example 1.16]{logmirror1}), see Figure~\ref{fig-focusfocus}. Parallel transport along a simple clockwise loop around the singularity has linear part $A=\bigl( \begin{smallmatrix}1 & 1\\ 0 & 1\end{smallmatrix}\bigr)$ in a suitable oriented basis $e_1,e_2$ (which is recognizable as the Dehn twist on cohomology). 
\begin{figure}[h]
\includegraphics[width=0.50\linewidth,bb=0 0 80 35]{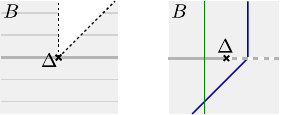}
\captionsetup{width=.95\linewidth}
\caption{Two differently slit neighborhoods of a focus-focus singularity $\Delta\in B$ with their embedding in $\RR^2$. The two dashed lines on the left get identified in $B$. The gray horizontal lines on the left are straight lines in the affine structure, as are the blue and green lines on the right. The bold gray line is the monodromy invariant direction at the singularity respectively. 
The fact that parallel lines intersect after passing the invariant line of the singularity is reminiscent of the effect of a lens which the author likes to think justifies the name \emph{focus-focus}.}
\label{fig-focusfocus}
\end{figure}

The first homology and cohomology of $\iota_*\Lambda$ and $\iota_*\check\Lambda$ on a neighborhood of the focus-focus-singularity is trivial, so in order to have a non-trivial demonstration of the pairing \eqref{main-pairing}, we need an affine manifold $B$ with two focus-focus points. 
The rank of the first homology/cohomology is then again trivial unless the two monodromy invariant directions are parallel to one another. 
Figure~\ref{fig-brille} on the left shows an example of two focus-focus points with the same invariant line and on the right with two distinct parallel lines. For the sake of intuition building, we now prove Theorem~\ref{main-thm-intro} for the examples in Figure~\ref{fig-brille} by direct computation.
\begin{figure}
\captionsetup{width=0.96\linewidth}
\begin{center}
\includegraphics[width=1.0\linewidth]{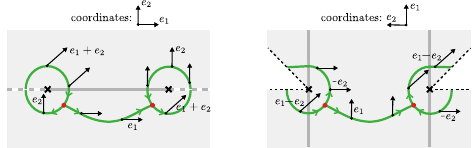}
\end{center}
\captionsetup{width=.95\linewidth}
\caption{Two different examples of an integral affine manifold $B$ with two focus-focus singularities (black crosses) showing the slit up $B$ embedded in $\RR^2$ respectively. 
Invariant lines are bold gray, slits are dashed. 
The green cycles are generators of $H_1(B,\iota_*\Lambda)$ respectively. 
Black arrows indicate the sections of $\Lambda$ attached to the the green oriented edges of the cycles.
At each of the (red) vertices of the green cycles, one checks that the oriented sum of the adjacent black arrows is zero as necessary for a cycle. We call these types of cycles \emph{easy cycles} or \emph{goggles}.}
\label{fig-brille}
\end{figure}
The computation will also verify that the green cycles in the figure are generators of $H_1(B,\iota_*\Lambda)$ respectively.
We first compute $H^1(B,\iota_*\check\Lambda)$ by a \v{C}ech cover that features two open sets, each containing one of the focus-focus points.
Let $e_1,e_2$ be the basis for a stalk of $\Lambda$ as indicated in the figure and let $e^*_1,e^*_2$ be the dual basis generating a stalk of $\check\Lambda$. 
The sections of $\iota_*\Lambda$ on the open sets are the invariant lattice under $A^T=\bigl( \begin{smallmatrix}1 & 0\\ 1 & 1\end{smallmatrix}\bigr)$ respectively, so this is $\ZZ e_2^*$. The sections on the intersection of the two open sets are however all of $\check\Lambda$ in both examples, so we conclude
$$ H^0(B,\iota_*\check\Lambda)=\ZZ e_2^*,\qquad H^1(B,\iota_*\check\Lambda)=\ZZ e_1^*.$$
A very similar computation also gives
\begin{equation}\label{eq-example-Lambda}
 H^0(B,\iota_*\Lambda)=\ZZ e_1,\qquad H^1(B,\iota_*\Lambda)=\ZZ e_2.
\end{equation}
Generally speaking, singular homology with sheaf coefficients is hard to compute directly. We provide simplicial techniques for this in Appendix~\ref{general-constructible-sheaves} but for now, we use 
that $B$ is orientable, so we may apply Theorem~\ref{thm-lefschetz} and then a \v{C}ech cohomology computation gives the homology. 
By the same \v{C}ech cover that we just used to compute \eqref{eq-example-Lambda} (though leading to a different complex), we compute that $H^0(\partial B,\Lambda|_{\partial B})=\ZZ e_1$ and $H^1(\partial B,\Lambda|_{\partial B})=\ZZ e_2$ and most importantly that the restriction $H^1(B,\iota_*\Lambda)\ra H^1(\partial B,\Lambda|_{\partial B})$ is the zero map! 
The restriction $H^0(B,\iota_*\Lambda)\ra H^0(\partial B,\Lambda|_{\partial B})$ is an isomorphism, so the long exact sequence of the pair $(B,\partial B)$ yields $H^1(B,\partial B;\iota_*\Lambda)=\ZZ e_2$ and thus by Theorem~\ref{thm-lefschetz} we find $H_1(B,\iota_*\Lambda)\cong\ZZ$. We show that the green cycle in Figure~\ref{fig-brille} is a generator by pairing it with
the generator $e_1^*$ of $H^1(B,\iota_*\check\Lambda)$ under \eqref{main-pairing}. 
This works by evaluating $e_1^*$ on the section of the center edge (up to taking the orientation of the edge into consideration by a sign) because this edge is the only part of the green cycle meeting the intersection of the two open charts. Since the section on the center edge is $e_1$ in both examples, we conclude that the pairing \eqref{eq-bil-form} is perfect even over $\ZZ$ in the examples of Figure~\ref{fig-brille}.

\begin{remark} 
Positioning two focus-focus singularities with non-parallel invariant directions but so that their primitive direction generators don't span $\Lambda_b$ (for $b$ a general point) yields an example of $H^1(B,\iota_*\check\Lambda)$ that is torsion as can easily be computed by a \v{C}ech cover with two charts as just done. 
Furthermore, again by a similar computation as just done, one finds that $H_1(B,\iota_*\Lambda)$ is also torsion. Of course, there is no non-trivial map $T'\otimes T\ra \ZZ$ for $T,T'$ torsion groups, so it makes sense to study the perfectness question of \eqref{main-pairing} over $\QQ$ rather than $\ZZ$.
\end{remark}

\begin{figure}[h]
\includegraphics[width=.78\linewidth]{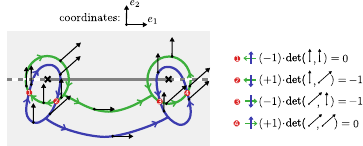}
\caption{An example computation using Theorem~\ref{thm-intersect}\,(3): For $\beta$ a goggle (introduced below in Figure~\ref{fig-brille}) and $\beta'$ a small perturbation of $\beta$, the above illustration shows the computation of $\beta.\beta'=-2$.}
\label{fig-brille-self-cut}
\end{figure}

\subsection{The K3 lattice made of goggles}
An elliptic K3 fibration over $\PP^1$ with at worst $I_1$ fibers needs to have precisely 24 singular fibers to match the Euler characteristic.
Similarly, if the only type of singularity permitted for an integral affine structure on a 2-sphere is the focus-focus type then there will be 24 singular points. This latter statement however requires a different proof using facts on central extensions of $\operatorname{PSL}_2(\ZZ)$ making the result a Gauss--Bonnet type theorem, see \cite{LM} or \cite[6.5, Theorem 2]{KS}, \cite[Theorem 6.7]{LS}, \cite[Corollary 1.19]{Th}.

There are numerous ways to obtain specific integral affine structures with 24 singularities on $S^2$ from toric degenerations of K3 surfaces.
For an example, consider a smooth tri-degree $(2,2,2)$-hypersurface $X$ in $(\PP^1)^3$. 
Degenerate the hypersurface in a pencil to the toric boundary divisor $D$, that is $tX+D$ for $t$ a parameter that gives the degeneration at $t=0$.
The six copies of $\PP^1\times\PP^1$ that are the components of $D$ form a ``cube''. 
The moment map $\PP^1\ra[0,1]$ is $[z:w]\mapsto |z|^2/(|z|^2+|w|^2)$. 
By taking products, we get the moment map $\mu\colon (\PP^1)^3\ra[0,1]^3$ which sends $D$ onto $\partial([0,1]^3)$, the boundary of an actual cube, hence to a topological $S^2$.
The composition 
\begin{equation}
f\colon X\ra D\ra (\hbox{boundary of a 3-cube})=:B \label{cube-SYZ-map}
\end{equation}
of a retraction $X\ra D$ with the moment map is a 2-torus fibration with 24 $I_1$ fibers (as long as $X$ is sufficiently general).
The intersection of $X$ with $\Sing(D)$ in $(\PP^1)^3$ is a set of 24 points. Let $\Delta$ denote its image in $B$ under $\mu$ giving two points for each edge in $B$.
The map $f$ in \eqref{cube-SYZ-map} can be upgraded to a symplectic fibration, so we obtain an integral affine structure on $B\setminus \Delta$.
This integral affine structure can be covered by three charts as is shown in 
Figure~\ref{fig-cubecharts}.
Each singularity is a focus-focus point with invariant monodromy direction given by the edge it lies on.
\begin{figure}
$\vcenter{\hbox{
\includegraphics[width=0.27\linewidth]{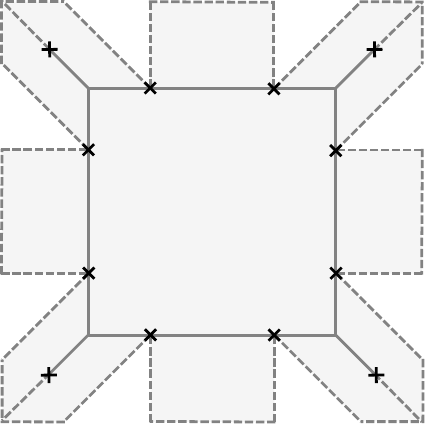}
}}$
$\vcenter{\hbox{
\includegraphics[width=0.68\linewidth]{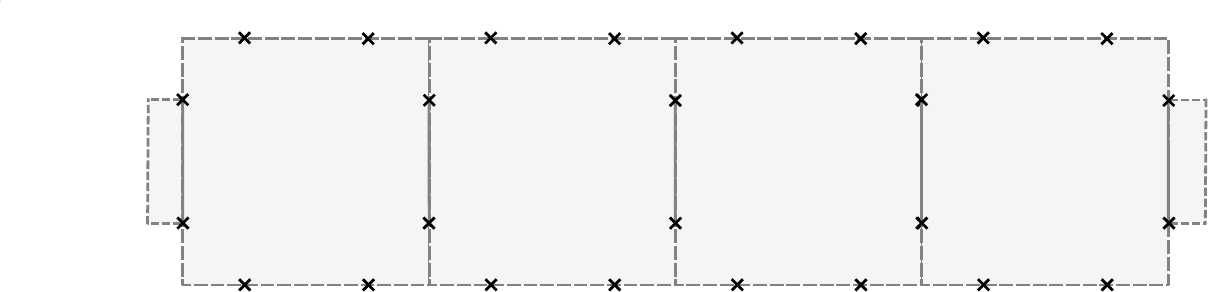}
}}$
\captionsetup{width=.95\linewidth}
\caption{Gluing a ``cube'': two copies of the left chart (for top and bottom face of a cube) together with the right chart (for the equator) cover the complement of 24 points (black crosses) on $S^2$ and induce an integral affine structure with 24 focus-focus singularities on $S^2$.}
\label{fig-cubecharts}
\end{figure}
Our next goal for our cube example is to exhibit a lattice basis of $H_1(B,\iota_*\Lambda)$ in terms of goggles. Symington \cite[Figure 17]{Sy01} gave a similar basis for a different example. By \cite[Theorem on p.225]{LM} all monodromy representations of affine structures on $S^2\setminus \{24\hbox{ focus-focus points}\}$ are conjugate, so a basis like Symington's must exist also in our example. Figure~\ref{goggleE8} shows eight goggles that form a $-E_8$. 
To actually verify this, we need to compute the self-intersection of a goggle which is done in Figure~\ref{fig-brille-self-cut} below in the context
\noindent
\begin{minipage}[b]{0.53\textwidth}\vspace{1.6mm}
motivating the intersection theorem.
That physical intersection points of two different goggles result in an intersection number $1$ is computed similarly.
Since there are two copies of the chart that we used to obtain the $-E_8$, we actually obtain $(-E_8)^{\oplus 2}\subset H_1(B,\iota_*\Lambda)$.
It remains to exhibit two copies of the hyperbolic plane $H$ by means of new goggles that don't meet existing ones.
This is demonstrated in Figure~\ref{fig-equator-wcycles}. The two goggles shown here ``use'' the focus-focus\hspace*{\fill} points\hspace*{\fill} that\hspace*{\fill} had\hspace*{\fill} been\hspace*{\fill} spared\hspace*{\fill} from\hspace*{\fill} the 
$-E_8$-constructions. Furthermore there are two cycles that don't encircle any single focus-focus point but traverse along the entire equator. These have self-intersection zero since they can easily be displaced.
One checks that
\end{minipage}
\qquad
\begin{minipage}[b]{0.4\textwidth}
\includegraphics[width=1\textwidth]{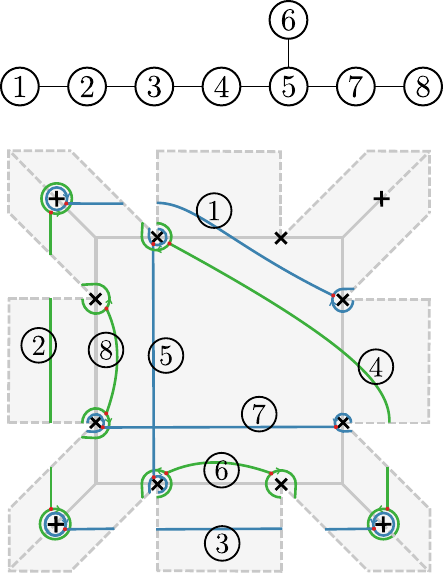}
\captionsetup{width=1.4\linewidth}
\captionof{figure}{$-E_8$ made of goggles} \label{goggleE8}
\ 
\end{minipage}\\
\noindent 
a pair of goggle and equator each form the intersection pairing $\bigl( \begin{smallmatrix}0 & 1\\ 1 & -2\end{smallmatrix}\bigr)$ which is equivalent to $H$ by a change of basis.
We have constructed $(-E_8)^{\oplus 2}\oplus H^{\oplus 2}\subset H_1(B,\iota_*\Lambda)$ and the key question is why this inclusion would be an isomorphism. 
\begin{figure}[h]
\includegraphics[width=1\linewidth]{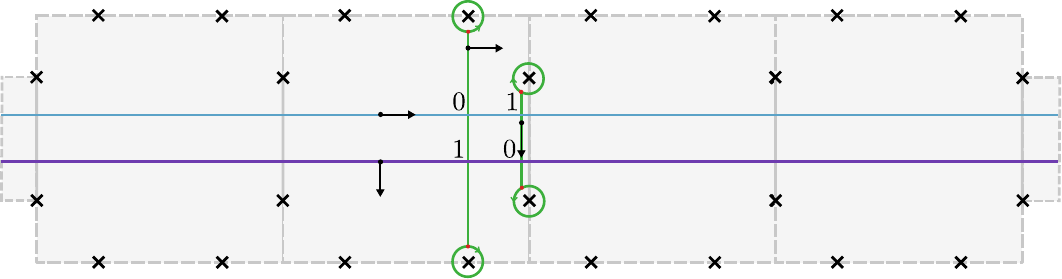}
\captionsetup{width=.96\linewidth}
\caption{Two hyperbolic planes, each made of a goggle and an equator. Numbers at intersection points of cycles display the respective the intersection numbers.}
\label{fig-equator-wcycles}
\end{figure}
An easy way to argue this is to follow the arguments at the beginning of the article to know that $H_1(B,\iota_*\Lambda)$ itself is isomorphic to $(-E_8)^{\oplus 2}\oplus H^{\oplus 2}$.
Symington's basis in \cite[Figure 17]{Sy01} is given in terms of relative cycles, that is cycles in $H_1(B,\Delta;\iota_*\Lambda)$. Note that the stalk of $\iota_*\Lambda$ at a point in $\Delta$ has rank one given by the monodromy invariant vectors. An edge of a cycle that ends on such a point therefore necessarily needs to carry a monodromy invariant vector.

There is a split exact sequence
\begin{equation}
0\ra H_1(B,\iota_*\Lambda)\ra H_1(B,\Delta;\iota_*\Lambda) \ra \underbrace{H_0(\Delta;\iota_*\Lambda)}_{\cong \ZZ^{24}}\ra 0.
\label{towardsrelatives}
\end{equation}
Indeed, Figure~\ref{fig-goggle-to-relative} sketches how we can lift the relative cycle shown on the right to a goggle by subtracting a ``local relative cycle'' for each point in $\Delta$. 
Such local relative cycles split \eqref{towardsrelatives}. 
\begin{figure}[h]
\includegraphics[width=1\linewidth]{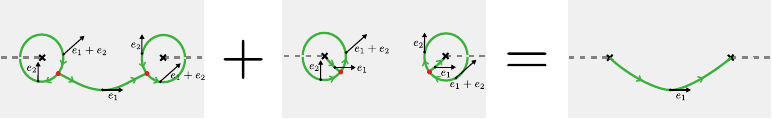}
\captionsetup{width=.96\linewidth}
\caption{Goggle plus two local relative cycles yield a Symington relative cycle.}
\label{fig-goggle-to-relative}
\end{figure}
The group $H_1(B,\Delta;\iota_*\Lambda)$ plays a central role in \cite{tropK3}.

\subsection*{Acknowledgments} 
The interest in expressing the pairing \eqref{eq-bil-form} purely in terms of affine geometry arose from discussions with Diego Matessi who already had an idea of its local form in dimension two.
Bernd Siebert greatly encouraged this work, provided background knowledge on the K3 case, technical advice on sheaf homology and gave valuable suggestions for improving the presentation.
Andreas Gross, Johannes Rau, Kristin Shaw and Yuto Yamamoto guided me through the state of knowledge of tropical intersection theory. 
The main theorem was proved during my stay as a postdoc at the Fields institute in Toronto in 2013, my gratitude here goes to Noriko Yui. 
I like to thank the referee for many improvements and corrections.

%
%

\section{Symple models and symple affine manifolds with singularities} \label{sec-simple}
Our definition of simplicity is derived from \cite[Definition\,1.60]{logmirror1} while being more general and easier to describe. 
We fix $a,b>0$.
Let $\triang$ be an $a$-dimensional lattice simplex\footnote{A lattice polytope is the convex hull in $\RR^n$ of a finite subset of $\ZZ^n$. Two such polytopes are considered isomorphic if a transformation in the affine group $\GL_n(\ZZ)\ltimes\ZZ^n$ takes one bijectively to the other.} in $\RR^a$ and let $\check\triang$ be a $b$-dimensional lattice simplex in $(\RR^b)^*=\Hom(\RR^b,\RR)$, both with respect to the standard lattice.
Let $Y\subset (\RR^a)^*$ and $\check Y\subset \RR^b$ be the codimension one skeleton of the normal fan of $\triang, \check\triang$ respectively.
\begin{definition} 
\label{def-simple-model}
Given $\triang$, $\check\triang$ as above\footnote{The author apologizes for using $\Delta$ and $\triang$ to mean different objects in the following. The benefit is to be consistent with the notation in \cite{logmirror1}.}, a {\bf symple model} is an affine manifold with singularities $(B,\Delta)$ homeomorphic to $\big((\RR^a)^*\times\RR^b,Y\times\check Y\big)$
so that the linear part of its\hfill monodromy\hfill representation\hfill is\hfill given\hfill as\hfill follows.\hfill If\hfill $(x,y)\in Y\times\check Y$\hfill is\hfill a\hfill point\hfill in\hfill a
\\[5pt]
\noindent
\begin{minipage}{\linewidth} 
\begin{minipage}{0.65\textwidth} 
maximal stratum $C$, the monodromy along a simple loop around $C$ is given as an automorphism of $(\RR^a)^*\times \RR^b$ by
\begin{equation}
\label{eq-explicit-monodromy}
 v \mapsto v+ \langle v,n_y\rangle m_x
\end{equation}
where $n_y$ is the (integral) edge vector of $\check\triang$ for the edge that is dual to the stratum of $\check Y$ containing $y$ and 
$m_x$ is the edge vector of $\triang$ that is dual to the stratum of $Y$ containing $x$ (and we identify $n_y$ with $(0,n_y)$ and $m_x$ with $(m_x,0)$). 
Orientations of the vectors and the loop match the illustration on the right after taking a suitable 2-dimensional slice transverse to $C$. 
\end{minipage}
\hspace{0.05\textwidth}
\begin{minipage}{0.3\textwidth}
\begin{figure}[H]
\includegraphics[width=1\textwidth]{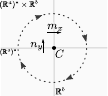}
\end{figure}
\vspace{2pt} 
\end{minipage}
\end{minipage} \vspace{0pt}
\end{definition}
If $\dim B=2$, then $\dim\Delta=0$ and the monodromy around the point $\Delta$ in a symple model for a suitable basis of a stalk of $\Lambda$ is given by $\bigl( \begin{smallmatrix}1 & k\\ 0 & 1\end{smallmatrix}\bigr)$ where $k$ is the product of the integral lengths of the two intervals $\triang$ and $\check\triang$.

If $\dim B=3$ and $B$ is not already given as a product of the two-dimensional situation with $\RR$, then either $\triang$ is a two-simplex or $\check\triang$ is a two-simplex and the respective other one is an interval, see Figure~\ref{fig-explicitmodels}. In both cases, $\Delta$ is homeomorphic to the 1-skeleton of the fan of $\PP^2$, i.e. three different rays that originate in the same point. At a point along a ray, the model is locally isomorphic to the product of $\RR$ with the situation for $\dim B=2$ but there is an interesting interplay between the monodromies around the three rays that is encoded in the polytopes $\triang,\check \triang$. 
The special case for $\dim B=3$ known by the name ``simple'' is given when $\triang$ and $\check\triang$ are both standard simplices\footnote{\label{fnstandard}The standard simplex of dimension $k$ is the convex hull of $\{0,e_1,...,e_k\}$ in $\RR^k$.}. In particular, $k=1$ for each ray in the simple case and this situation has been studied extensively in \cite[Theorem 1.10]{Gr98} and \cite[Example 2.8, Example 2.10]{CBM09}.

\begin{construction} 
\label{construction-symple-model}
A symple model exists for every pair $\triang,\check\triang$ as follows.
Pick a homeomorphism $h:(\RR^a)^*\ra \op{int}(\triang)$ which identifies $Y$ with the union of those simplices in the first barycentric subdivision of $\triang$ that do not meet any vertex of $\triang$. This can be done so that the closure in $\triang$ of $h(Y_e)$ meets $e$ whenever $Y_e$ is the maximal stratum of $Y$ that corresponds to an edge $e$.
Take $B=\op{int}(\triang)\times\RR^b$ and $\Delta=h(Y)\times\check Y$. 
Given a vertex $v\in\triang$, we define the open set
$$ U_v:=B\setminus \big(\Delta+\RR_{\ge0}(\triang-v)\big)$$
and observe that each $U_v$ is contractible and $B\setminus\Delta=\bigcup_v U_v$, see Figure~\ref{fig-explicitmodels}.
\begin{figure}
\includegraphics[width=.8\textwidth]{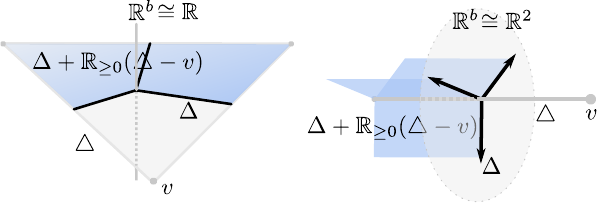}
\captionsetup{width=.95\linewidth}
\caption{In the explicit symple model for $\dim B=3$: depicting the set $\Delta+\RR_{\ge0}(\triang-v)$ when $\dim\triang=2$ (left) and $\dim\triang=1$ (right).}
\label{fig-explicitmodels}
\end{figure}
We fix one vertex $v_0\in\triang$. 
Let $\psi_{\check\triang}:\RR^b\ra\RR$ be the piecewise linear function defined by $\check\triang$ via
$$\psi_{\check\triang}(y)=-\min\{\langle y,n\rangle\,\mid\, n\in \check\triang\}.$$
The function is linear on each connected component of $\RR^b\setminus\check Y$ and in fact given by the negative of the pairing with the vertex of $\check\triang$ that corresponds to that component.

The set $U_v$ already has an affine structure from its embedding in $\op{int}(\triang)\times\RR^b$ but that is not the one we want to use unless $v=v_0$.
To obtain an affine structure on $U_v$, we define the chart $\varphi_v:U_v \ra \RR^a\times\RR^b$ by means of the piecewise linear embedding given by
$$(x,y)\mapsto (x,y)+\psi_{\check\triang}(y) \cdot (v-v_0,0).$$

The transition maps $\varphi_v(U_v\cap U_w) \stackrel{\varphi_{vw}}\ra \varphi_w(U_v\cap U_w)$ between charts are now forced upon us to be
$\varphi_{vw}\colon (x,y)\mapsto (x,y)+\psi_{\check\triang}(y) \cdot (w-v,0)$.
For $v\neq w$ the intersection $U_v\cap U_w$ agrees with $B\setminus(\op{int}(\triang)\times \check Y)$ and therefore
$\psi_{\check\triang}$ is linear on each component of $U_v\cap U_w$ as needed.
Figure~\ref{fig-explicitmodelcharts} illustrates the charts for the situation in dimension two.
We leave it to the reader to verify that the monodromy around each maximal stratum of $\Delta$ agrees with \eqref{eq-explicit-monodromy} (which can be done by reduction to the two-dimensional case by means of a suitable transverse slice).
\begin{figure}
\includegraphics[width=.9\textwidth]{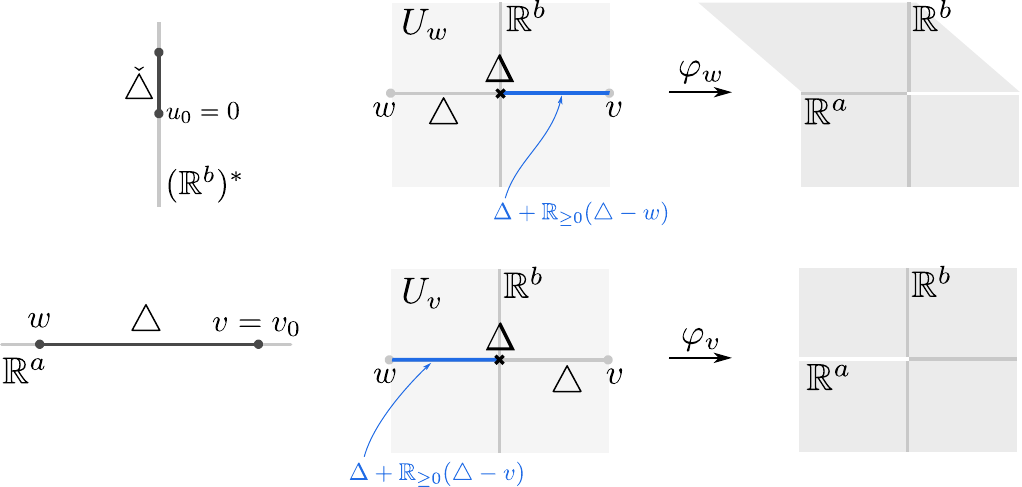}
\captionsetup{width=.95\linewidth}
\caption{For $\dim B=2$ and $a=b=1$, depicting $\varphi_v,\varphi_w$ when $v=v_0$.}
\label{fig-explicitmodelcharts}
\end{figure}
\end{construction}

For the remainder of the section, we will use $y$ to reference points in $B$.
\begin{definition}
 \label{definition-simple}
An integral affine manifold with singularities and possibly boundary $(B,\Delta)$ is {\bf symple} if it is locally a product of symple models and a trivial factor.
Specifically, this means that for every point $y\in B$ there exist $a_1,b_1,...,a_r,b_r,c,c'\ge 0$ and lattice polytopes $\triang_j\subset \RR^a$, $\check\triang_j\subset (\RR^b)^*$ for $j=1,...,r$ so that, in a neighborhood of $y$, $(B,\Delta)$ is homeomorphic to 
$((\RR^{a_1})^*\times \RR^{b_1})\times...\times ((\RR^{a_r})^*\times \RR^{b_r})\times \RR^c\times \RR_{\ge 0}^{c'}$ identifying $\Delta$ with the union of the inverse images of $Y_j\times\check Y_j$ under the projections to 
$((\RR^{a_j})^*\times \RR^{b_j})$ for $j=1,...,r$. The homeomorphism identifies the local systems $\Lambda$ on the complements of the singularities.
\end{definition} 
Without loss of generality, $c'\in\{0,1\}$ because $\RR_{\ge 0}^2$ is homeomorphic to $\RR\times\RR_{\ge 0}$ etc. and the monodromy doesn't interact with the boundary of $B$.

A large host of simple examples (and thus symple examples) in all dimensions can be found in \cite[Theorem 3.16]{grossannalen}. If the subdivision that underlies loc.cit.~is only a triangulation as opposed to a maximal triangulation, the resulting affine manifold will be symple but not simple.

\begin{remark} 
\label{wlog-p-origin}
If $(B,\Delta)$ is symple then every point $y$ permits a symple model that identifies $y$ with the origin by a ``localization argument'': if $y$ is a point in a symple model given by $\triang,\check\triang$ so that $y$ is \emph{not} the origin, then a suitable open neighborhood of $y$ is the product of $\RR^c$ with a symple model for suitable faces $\triang'\subset \triang, \check\triang'\subset \check\triang$ and $c$ is the sum of the co-dimensions of $\triang'$ and $\check\triang'$. The faces $\triang',\check\triang'$ are precisely the ones that correspond to the maximal stratum of $Y$ and $\check Y$ that contain $y$. (If $y$ doesn't lie in $\Delta$ then $c=\dim B$.)
\end{remark}

For $(B,\Delta)$ symple, $\dim B=n$, the linear part of the monodromy is contained in $\GL_n(\ZZ)$. 
As in Definition~\ref{def-affmfd}, let $\Lambda$ and $\check\Lambda=\Hom(\Lambda,\ZZ)$ denote local systems of integral tangent vectors and their duals on $B\setminus\Delta$ and let $\iota\colon B\setminus\Delta\ra B$ denote the inclusion.
\begin{remark} 
\label{rem-dir-sum}
For $(B,\Delta)$ symple and $y\in B$, the restriction of $\iota_*\Lambda$ to a neighborhood of $y$ decomposes as a direct sum
$\iota_*\Lambda=\Lambda_1\oplus...\oplus\Lambda_r\oplus\ZZ^c$ where $\Lambda_j$ is the pullback of $(\iota_j)_*\Lambda'_j$ under the projection to $B_j:=((\RR^{a_j})^*\times \RR^{b_j})$ for $\iota_j\colon B_j\setminus (Y_j\times\check Y_j)\hra B_j$ the inclusion and $\Lambda'_j$ integral tangent vectors on $B_j\setminus (Y_j\times\check Y_j)$. 
\end{remark}

The following Lemma makes the key ingredient for the construction of the pairing \eqref{eq-gen-pairing}. 
As can be seen from its proof, the Lemma holds a lot more generally than for the assumptions stated but we stick to our setup.

\begin{lemma} 
\label{lemma-sections} 
Let $(U,\Delta)$ be an integral affine manifold $U$ with singularities $\Delta$ and assume $\dim U>1$. Let $\pi_1=\pi_{1}(U\setminus\Delta,y)$ denote the monodromy group for some base point $y\in U\setminus\Delta$, then
$$H^0(U,\iota_*\bigwedge^p\check\Lambda)=\Big(\bigwedge^p\check\Lambda_y\Big)^{\pi_1}.$$
If $y\in \partial U\setminus \Delta$ and $\partial U\setminus \Delta$ contains a set of generators for $\pi_1$, e.g. when U is the natural closed ball compactification of a symple model, then also
$$H^0(\partial U,\iota_*\bigwedge^p\check\Lambda) = \Big(\bigwedge^p\check\Lambda_y\Big)^{\pi_1}.$$
Similar statements holds with $\check\Lambda$ replaced by $\Lambda$. 
Furthermore, there is a natural commutative diagram
\begin{equation}
\label{eq-key-map}
\begin{gathered}
\xymatrix@C=30pt
{ 
H_0(U,\iota_*\bigwedge^p\Lambda)\ar[r]\ar[d] & \Hom(H^0(U,\iota_*\bigwedge^p\check\Lambda),\ZZ)\ar[d]\\
H_0(U,\iota_*\bigwedge^p\Lambda_\QQ)\ar[r]  & \Hom(H^0(U,\iota_*\bigwedge^p\check\Lambda_\QQ),\QQ)
}
\end{gathered}
\end{equation}
with the bottom horizontal map being an isomorphism. 
A similar diagram exists if we swap $\Lambda$ and $\check\Lambda$.
\end{lemma}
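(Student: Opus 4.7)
I begin with the $H^0$ computations. Unwinding the pushforward, $\Gamma(U, \iota_*\bigwedge^p\check\Lambda) = \Gamma(U\setminus\Delta, \bigwedge^p\check\Lambda)$, and since $\Delta$ has real codimension at least two in $U$ (with $\dim U \geq 2$), the complement $U\setminus\Delta$ is connected. Global sections of the local system $\bigwedge^p\check\Lambda$ on this connected, locally simply connected space correspond to the $\pi_1$-invariants of the stalk at $y$, giving the first formula; the same reasoning covers $\Lambda$. For the boundary version I similarly identify $\Gamma(\partial U, \iota_*\bigwedge^p\check\Lambda) = \Gamma(\partial U \setminus \Delta, \bigwedge^p\check\Lambda)$; after checking that $\partial U \setminus \Delta$ is connected (which is direct for the closed-ball compactification of a symple model by inspecting $Y\times\check Y$), sections are invariants under $\pi_1(\partial U \setminus \Delta, y)$, and the hypothesis that $\partial U \setminus \Delta$ contains a set of generators for $\pi_1(U\setminus\Delta, y)$ forces the two invariant subspaces of $\bigwedge^p\check\Lambda_y$ to agree.

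For the diagram, I build the pairing from the evaluation $\bigwedge^p\Lambda \otimes \bigwedge^p\check\Lambda \to \underline{\ZZ}$ on $U\setminus\Delta$, pushed forward via the canonical map $\iota_*\mathcal{F} \otimes \iota_*\mathcal{G} \to \iota_*(\mathcal{F}\otimes\mathcal{G})$ composed with $\iota_*\underline{\ZZ} = \underline{\ZZ}$ to obtain $\iota_*\bigwedge^p\Lambda \otimes \iota_*\bigwedge^p\check\Lambda \to \underline{\ZZ}$ on $U$. The top horizontal arrow is then the adjunct of the cap product $H_0(U,-) \otimes H^0(U,-) \to H_0(U, \underline{\ZZ}) = \ZZ$, and the bottom arrow is its $\QQ$-linear extension; commutativity is functoriality of the cap product.

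The main task is to show the bottom arrow is an isomorphism. Using the $H^0$ formula, its codomain equals $\bigl((\bigwedge^p\check\Lambda_{y,\QQ})^{\pi_1}\bigr)^\vee$. The plan is to identify the domain with $\pi_1$-coinvariants,
$$H_0\bigl(U, \iota_*\bigwedge^p\Lambda_\QQ\bigr) \cong \bigl(\bigwedge^p\Lambda_{y,\QQ}\bigr)_{\pi_1}.$$
Once this is in hand, the bottom arrow becomes the canonical pairing $V_{\pi_1} \otimes (V^\vee)^{\pi_1} \to \QQ$, which is perfect over $\QQ$ by the universal property of coinvariants (the identity $\Hom(V_G, \QQ) = (V^\vee)^G$ together with finite-dimensionality forces double-dual recovery). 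The hard part is the coinvariants identification: on $U\setminus\Delta$ alone, sheaf homology of a local system is immediately the stalk coinvariants, but the singularities along $\Delta$ make $\iota_*\bigwedge^p\Lambda$ only constructible, and care is needed to see that pushing $0$-cycles across $\Delta$ does not introduce new relations. To handle this I would use the simplicial methods for constructible sheaves developed in Appendix~\ref{general-constructible-sheaves}: pick a triangulation of $U$ compatible with the stratification by $\Delta$ and $\partial U$, and compute $H_0$ via the resulting cellular chain complex. The $0$-chains are stalks of $\iota_*\bigwedge^p\Lambda$ at vertices, and the $1$-chain differentials identify stalk values via parallel transport along edges in $U\setminus\Delta$; combined with the observation that stalks at points of $\Delta$ are already local-monodromy-invariants by the pushforward description, the relations precisely generate $\pi_1$-coinvariance. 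Carrying out this simplicial calculation carefully across the singular stratum $\Delta$ is the main technical step.
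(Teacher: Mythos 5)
Your proof is correct and follows essentially the same route as the paper: identify $H^0$ with $\pi_1$-invariants of the stalk, identify $H_0$ with $\pi_1$-coinvariants, and observe that the evaluation pairing between coinvariants and invariants of a finite-dimensional $\QQ$-representation is perfect. The $H^0$ computation and the boundary version are argued exactly as in the paper. Where you part ways is in how you treat the coinvariants identification for $H_0$: the paper simply writes down the right-exact sequence
$\bigoplus_i \bigwedge^p\Lambda_y \xrightarrow{\sum_i(T_i-\id)} \bigwedge^p\Lambda_y \to H_0(U,\iota_*\bigwedge^p\Lambda)\to 0$
and the dual left-exact sequence for $H^0$ without comment, then reads off the diagram and the $\QQ$-isomorphism from the fact that the two labelled arrows are transposes of one another. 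You flag the $H_0$ identification as the main technical step and propose invoking the full simplicial apparatus of Appendix~\ref{general-constructible-sheaves}. That would certainly work, but it is heavier than necessary here: because the stalk of $\iota_*\bigwedge^p\Lambda$ at a point of $\Delta$ is the local-monodromy-invariant subspace, any $0$-chain supported on $\Delta$ is homologous to one supported in $U\setminus\Delta$, and any $1$-simplex meeting $\Delta$ carries an invariant coefficient, so the relations it imposes are already among those coming from $1$-chains in $U\setminus\Delta$. Thus $H_0(U,\iota_*\bigwedge^p\Lambda)=H_0(U\setminus\Delta,\bigwedge^p\Lambda)=(\bigwedge^p\Lambda_y)_{\pi_1}$ by a short direct argument, which is presumably what the paper has in mind when it asserts the exact sequence. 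Your version also builds the top arrow directly from the cap product, whereas the paper constructs the arrow abstractly from the dual exact sequences and only later (Lemma~\ref{lemma-cap-zero}) identifies it with $\op{tr}\circ\cap$; both orderings are fine.
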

\begin{proof} 
The statement $H^0(U,\iota_*\bigwedge^p\check\Lambda)=H^0(U\setminus\Delta,\bigwedge^p\check\Lambda)$ is clear. 
Note that $\bigwedge^p\check\Lambda$ is a local system on the path-connected space $U\setminus\Delta$.
Combining the equivalence between local systems and representations of the fundamental group, e.g.~\cite[Lemma 4.7]{DK02}, with \cite[Proposition 5.14,\,2.]{DK02} gives 
$H^0(U\setminus\Delta,\bigwedge^p\check\Lambda)=\Big(\bigwedge^p\check\Lambda_y\Big)^{\pi_1}$. 
The second statement where $\partial U$ replaces $U$ follows by the same arguments.

Let $\gamma_1,...,\gamma_r$ be a set of generators for $\pi_1$ and let $T_i$ be the monodromy transformation of $\bigwedge^p\Lambda_y$ given by parallel transport along $\gamma_i$.
Let $T^t_i$ denote the transformation of $\bigwedge^p\check\Lambda_y$ that is dual to $T_i$. We have exact sequences
$$
\xymatrix@C=30pt
{ 
\bigwedge^p\Lambda_y\oplus...\oplus \bigwedge^p\Lambda_y \ar^(.6){\sum_i(T_i-\id)}[rr] && \bigwedge^p\Lambda_y \ar[r] & H_0(U,\iota_*\bigwedge^p\Lambda) \ar[r] & 0,
}
$$
$$
\xymatrix@C=30pt
{ 
0\ar[r] & (\bigwedge^p\check\Lambda_y)^{\pi_1} \ar[r] &
 \bigwedge^p\check\Lambda_y \ar^(.38){(T^t_1-\id,...,T^t_r-\id)}[rr] & &
\bigwedge^p\check\Lambda_y\oplus...\oplus \bigwedge^p\check\Lambda_y,
}
$$
and since the labeled arrows are dual to each other we deduce the claimed properties about \eqref{eq-key-map}.
\end{proof}
We remark that the top horizontal map in \eqref{eq-key-map} is an isomorphism if and only if it is injective which in turn is equivalent to $H_0(U,\iota_*\bigwedge^p\Lambda)$ being torsion-free. 
For $U$ a product of symple models and a trivial factor as in Definition~\ref{definition-simple}, the space $H_0(U,\iota_*\Lambda)$ has torsion if and only if, for at least one $j$, the simplex $\triang_j$ is not isomorphic to the standard simplex\textsuperscript{\ref{fnstandard}}.

For any locally compact open subset $U\subset B$, we have by \cite[(50) on p.337]{Br97} a cap product
$$H_0(U,\iota_*\bigwedge^p\Lambda)\otimes H^0(U,\iota_*\bigwedge^p\check\Lambda)\stackrel\cap\lra H_0(U,\iota_*\bigwedge^p\Lambda\otimes\iota_*\bigwedge^p\check\Lambda).$$
\begin{lemma} 
\label{lemma-cap-zero}
There is a natural surjection $\op{tr}\colon H_0(U,\iota_*\bigwedge^p\Lambda\otimes\iota_*\bigwedge^p\check\Lambda)\ra\ZZ$ and the horizontal maps in \eqref{eq-key-map} agree with the maps induced from
the bilinear pairing $\op{tr}\circ\cap$.
\end{lemma}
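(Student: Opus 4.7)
The plan is to first construct $\op{tr}$ from the natural evaluation on exterior powers. The perfect pairing of local systems $\bigwedge^p\Lambda\otimes\bigwedge^p\check\Lambda\to\ZZ$ on $U\setminus\Delta$, combined with the canonical map $\iota_*\shF\otimes\iota_*\shH\to\iota_*(\shF\otimes\shH)$ and the identification $\iota_*\ZZ_{U\setminus\Delta}=\ZZ_U$, yields a sheaf homomorphism
\[
\mu\colon \iota_*\bigwedge^p\Lambda\otimes\iota_*\bigwedge^p\check\Lambda\lra\ZZ_U.
\]
Applying $H_0(U,-)$ and composing with the degree map $H_0(U,\ZZ_U)\to\ZZ$ (or its projection onto a chosen connected component, should $U$ be disconnected) produces $\op{tr}$. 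Surjectivity is immediate: on the stalk at any $y\in U\setminus\Delta$, $\mu$ is the standard perfect pairing between $\bigwedge^p\ZZ^n$ and its dual, so dual bases already realize $1$, and the surjection on stalks gives a surjection on $H_0$.

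Next I would identify the top horizontal arrow of \eqref{eq-key-map} with a monodromy pairing. Choosing a basepoint $y\in U\setminus\Delta$, Lemma~\ref{lemma-sections} identifies $H^0(U,\iota_*\bigwedge^p\check\Lambda)$ with the invariants $(\bigwedge^p\check\Lambda_y)^{\pi_1}$, and the first exact sequence in its proof identifies $H_0(U,\iota_*\bigwedge^p\Lambda)$ with the coinvariants $(\bigwedge^p\Lambda_y)_{\pi_1}$. Under these identifications, the top map of \eqref{eq-key-map} is exactly the pairing $[a_y]\otimes b_y\mapsto\langle a_y,b_y\rangle$ induced by the standard evaluation on $\bigwedge^p\Lambda_y\otimes\bigwedge^p\check\Lambda_y$; it is well-defined because $b_y$ is $\pi_1$-invariant and therefore annihilates the image of $T_i-\id$.

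It remains to check that $\op{tr}\circ\cap$ computes the same pairing. Using the chain-level description of the cap product on sheaf (co)homology from \cite[V-\S10, (50) on p.337]{Br97}, I would reduce to representing a class $\alpha\in H_0(U,\iota_*\bigwedge^p\Lambda)$ by a $0$-chain $a_y\cdot[y]$ supported at a single point $y\in U\setminus\Delta$; this is possible by the coinvariant presentation just used. The cap with a global section of $\iota_*\bigwedge^p\check\Lambda$, whose stalk at $y$ is $b_y$, is then the $0$-chain $a_y\otimes b_y$ at $y$, and its image under $\op{tr}$ is $\mu_y(a_y\otimes b_y)=\langle a_y,b_y\rangle$, matching the preceding paragraph. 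The symmetric statement with $\Lambda$ and $\check\Lambda$ swapped follows by the same argument. I expect the main obstacle to be careful bookkeeping of the cap product convention at the chain level in Bredon's framework of singular sheaf homology; once a $0$-cycle is reduced to one supported at a single point of $U\setminus\Delta$, the identification with the stalk pairing becomes formal.
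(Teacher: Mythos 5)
Your proposal is correct and follows essentially the same route as the paper: both realize $\op{tr}$ via the stalk pairing $\frak{p}$ (you via a sheaf morphism into $\ZZ_U$ using $\iota_*\ZZ_{U\setminus\Delta}=\ZZ_U$, the paper by descending $\frak{p}$ through the coinvariant presentation of $H_0(U,\iota_*\bigwedge^p\Lambda\otimes\iota_*\bigwedge^p\check\Lambda)$ — the two constructions give the same map), and both identify $\op{tr}\circ\cap$ with the horizontal arrow of \eqref{eq-key-map} by computing the cap product at chain level on a $0$-chain supported at a single point of $U\setminus\Delta$. The paper merely packages this last step as an explicit commutative diagram built from the exact sequences in the proof of Lemma~\ref{lemma-sections}, which is the bookkeeping you correctly flag as the one remaining formality.
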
 
\begin{proof} 
For $y\in U\setminus\Delta$, we have the obvious dual pairing surjection $\frak{p}\colon\bigwedge^p\Lambda_y\otimes \bigwedge^p\check\Lambda_y\ra\ZZ$.
If $T\colon\bigwedge^p\Lambda_y\ra \bigwedge^p\Lambda_y$ is the monodromy transformation given by parallel transport along a $y$-based loop $\gamma$ in $U$ and $T^*$ is its dual, then $\frak{p}(v,w)=\frak{p}(T(v),T^*(w))$. In other words, $\im(T\otimes T^*-\id)\subseteq \ker(\frak{p})$.
We have $H_0(U,\iota_*\bigwedge^p\Lambda\otimes\iota_*\bigwedge^p\check\Lambda)=(\bigwedge^p\Lambda_y\otimes \bigwedge^p\check\Lambda_y) / K$ where $K$ is the subgroup generated by the images of endomorphisms of the form
$(T\otimes T^*-\id)$ for $T$ the monodromy of a $y$-based loop. 
Since $(\bigwedge^p\Lambda_y\otimes \bigwedge^p\check\Lambda_y) / K \sra (\bigwedge^p\Lambda_y\otimes \bigwedge^p\check\Lambda_y)/ \ker(\frak{p}) \cong\ZZ$, we verified the existence and surjectiveness of $\op{tr}$.

The cap product at chain level is defined in \cite[(49) on p.337]{Br97} as the straightforward map that takes an $i$-chain $\alpha$ with coefficients in $\shA$ to the $i$-chain $\alpha\otimes\beta$ for a given 
global section $\beta\in\Gamma(U,\shB)$.
We need to verify that the horizontal maps in \eqref{eq-key-map} derive from this form after applying $\op{tr}$. 
Let $\beta\in\Gamma(U, \bigwedge^p\check\Lambda)$ be given and
consider the commutative diagram with exact rows
$$ 
\resizebox{\textwidth}{!}{
\xymatrix@C=30pt
{ 
\bigwedge^p\Lambda_y\oplus...\oplus \bigwedge^p\Lambda_y \ar^(.6){\sum_i(T_i-\id)}[rr]\ar^{\cdot\otimes\beta}[d] && \bigwedge^p\Lambda_y \ar[r]\ar^{\cdot\otimes\beta}[d] & H_0(U,\iota_*\bigwedge^p\Lambda) \ar[r]\ar^{\cdot\cap\beta}[d] & 0\\
\bigwedge^p\Lambda_y\otimes \bigwedge^p\check\Lambda_y\oplus...\oplus \bigwedge^p\Lambda_y\otimes \bigwedge^p\check\Lambda_y \ar^(.6){\sum_i(T_i\otimes T_i^*-\id)}[rr] && \bigwedge^p\Lambda_y\otimes \bigwedge^p\check\Lambda_y \ar[r]\ar^{\frak{p}}[d] & H_0(U,\iota_*\bigwedge^p\Lambda\otimes \bigwedge^p\check\Lambda)\ar^{\op{tr}}@{-->}[ld]  \ar[r] & 0\\
&&\ZZ.
}
}
$$
The claim follows from the observation that passing $\alpha\in H_0(U,\iota_*\bigwedge^p\Lambda)$ through the diagram to the bottom $\ZZ$ agrees with evaluating it on $\beta$ after mapping it under the horizontal map in \eqref{eq-key-map} by the construction of this horizontal map in the proof of Lemma~\ref{lemma-sections}.
\end{proof}

\begin{lemma} 
\label{lem-tube-is-mfd}
The boundary of a tubular neighborhood of $\Delta\subset B$ in a symple integral affine manifold $B$ is a topological manifold.
\end{lemma}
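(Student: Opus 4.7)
The plan is to argue locally on $B$ and then invoke the regular-neighborhood theorem from piecewise-linear topology. By Definition~\ref{definition-simple}, a neighborhood of every point $y \in B$ is PL-homeomorphic to a product of symple models and a trivial factor $\prod_j\big((\RR^{a_j})^*\times\RR^{b_j}\big)\times\RR^c\times\RR^{c'}_{\ge 0}$, and under this homeomorphism $\Delta$ is identified with the union of the preimages of the subsets $Y_j\times\check Y_j$. Each $Y_j$ and $\check Y_j$ is by construction the codimension-one skeleton of the normal fan of a lattice simplex, and is therefore a finite polyhedron. Products, preimages under the linear coordinate projections, and finite unions of polyhedra are again polyhedra, so $\Delta$ is locally a subpolyhedron of $B$.

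Now choose a PL triangulation of $B$ refining this local data so that $\Delta$ appears as a PL subcomplex; this is possible because PL manifolds admit triangulations compatibly refining any locally polyhedral closed subset. The regular-neighborhood theorem (e.g.\ Rourke--Sanderson, \emph{Introduction to Piecewise-Linear Topology}, Ch.~3) then asserts that a regular neighborhood $N$ of any subpolyhedron of a PL $n$-manifold is itself a codimension-zero PL submanifold with boundary, and consequently $\partial N$ is a closed PL $(n-1)$-manifold. Interpreting ``tubular neighborhood'' in the PL sense as a regular neighborhood (which is the natural replacement since $\Delta$ has corners and is not a smooth submanifold of $B$), we conclude that the boundary is a topological manifold.

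The principal step to be checked carefully is the first: that $\Delta$ is genuinely a subpolyhedron of a PL structure on $B$, rather than merely a closed subset. This reduces to the explicit local product structure of Definition~\ref{definition-simple}, which provides polyhedral defining data for $\Delta$ in each chart, and to the fact that the transition maps between symple models—being ingredients of a PL atlas—respect the polyhedral character of $\Delta$. The possible presence of $\partial B$ causes no additional complication, since the $\RR^{c'}_{\ge 0}$-factor in the local decomposition sits disjointly from the $Y_j\times\check Y_j$-locus, so the PL regular-neighborhood theorem for manifolds with boundary applies unchanged.
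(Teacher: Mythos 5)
Your proof is correct but takes a genuinely different route from the paper's. The paper's argument is a single line: it invokes the fact that the boundary of the amoeba of a complex hyperplane is a topological manifold, exploiting the specific geometry of symple models (tropical hyperplanes and their products) to realize the tubular neighborhood concretely as (a PL model for) an amoeba. You instead argue purely in PL topology: observe that $\Delta$ is a subpolyhedron of the PL manifold $B$, and then cite the regular-neighborhood theorem to conclude that a regular neighborhood is a codimension-zero PL submanifold with PL manifold boundary. Your route is more elementary (no tropical geometry input) and strictly more general: it does not actually use the symple hypothesis at all, since already Definition~\ref{def-affmfd} requires $(B,\Delta)$ to be locally PL homeomorphic to $(\RR^n,\Sigma)$ with $\Sigma$ a finite polyhedral fan, which makes $\Delta$ a subpolyhedron without invoking Definition~\ref{definition-simple}. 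The paper's argument is more geometric and ties the construction directly to the origin of these objects in mirror symmetry, but is terser and relies on an external analytic fact about amoebas. One small imprecision in your write-up: the claim that the $\RR_{\ge 0}^{c'}$-factor ``sits disjointly'' from the $Y_j\times\check Y_j$-locus is misleading, since $\Delta$ is the preimage of $\bigcup_j Y_j\times\check Y_j$ under the coordinate projections and hence genuinely meets $\partial B$; the correct observation is that near $\partial B$ the pair $(\Delta,B)$ is a product with a half-line factor, i.e.\ $\Delta$ meets $\partial B$ transversely, which is what allows the relative (manifold-with-boundary) version of the regular-neighborhood theorem to apply. With that phrasing fixed, the argument is sound.
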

\begin{proof} 
This follows from the fact that the boundary of the amoeba of a hyperplane is a topological manifold. 
\end{proof}

%
%

\section{First cohomology of punctured neighborhoods}
Let $B$ be a symple affine $n$-manifold with singularities $\Delta$, $p\in B$ a point and $U\subset B$ a small open neighborhood of $p$ in $B$. 
We set $\Lambda_\QQ=\Lambda\otimes_\ZZ\QQ$. The goal of this section is to prove the following theorem which will be the main ingredient for the perfect pairing in the following section.
\begin{theorem} 
\label{H1-of-punctured-is-zero}
If $n\ge 3$ then
$H^1(U\setminus\{p\},\iota_*\Lambda_\QQ)=0.$ 
\end{theorem}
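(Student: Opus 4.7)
The plan is to reduce the statement to a \v{C}ech computation on the link of $p$, exploiting the product structure of symple singularities. By Remark~\ref{wlog-p-origin} I may shrink $U$ to the local model
$$U=\prod_{j=1}^r\bigl((\RR^{a_j})^*\times\RR^{b_j}\bigr)\times\RR^c\times\RR^{c'}_{\ge 0}$$
with $p=0$ and $\Delta=\bigcup_j\pi_j^{-1}(Y_j\times\check Y_j)$. Remark~\ref{rem-dir-sum} splits $\iota_*\Lambda_\QQ$ as $\bigoplus_{j=1}^r\pi_j^{-1}(\iota_{j,*}\Lambda'_{j,\QQ})\oplus\QQ^c$ on $U$, so it suffices to verify $H^1=0$ on each direct summand. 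Since each $Y_j$ and $\check Y_j$ is a cone from the origin, so is $\Delta$; radial retraction therefore induces a stratum-preserving deformation retraction of $U\setminus\{p\}$ onto its link $L$, an $(n-1)$-sphere (or closed hemisphere when $c'=1$), and each summand is the pullback of its restriction to $L$. I am thereby reduced to computing $H^1(L,\cdot)$ for each summand.

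For the trivial summand $\QQ^c$, $H^1(L,\QQ)=0$ holds because $L$ is an $(n-1)$-sphere or hemisphere and $n\ge 3$. For a symple-factor summand $\pi_j^{-1}(\iota_{j,*}\Lambda'_{j,\QQ})$, I plan to compute $H^1(L,\cdot)$ via a \v{C}ech cover of $L$ given by (i) the intersections $L\cap\pi_j^{-1}(U_v)$, one for each vertex $v\in V(\triang_j)$, where the $U_v$ are the contractible open sets of Construction~\ref{construction-symple-model} (these cover $L\setminus(L\cap\pi_j^{-1}(\Delta_j))$), augmented by (ii) a small tubular neighborhood of $L\cap\pi_j^{-1}(\Delta_j)$ in $L$ whose manifold structure is supplied by Lemma~\ref{lem-tube-is-mfd}. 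On each open set of this cover the sheaf is constant with value the corresponding monodromy-invariant sublattice of $\Lambda'_{j,\QQ}$ by Lemma~\ref{lemma-sections}, so the first \v{C}ech cohomology of the cover computes $H^1(L,\cdot)$.

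The principal obstacle is the \v{C}ech computation itself, namely showing that every 1-cocycle is a coboundary rationally. Concretely, on each pairwise overlap $L\cap\pi_j^{-1}(U_v\cap U_w)$, I must verify that the sum of the two $\QQ$-invariant sublattices on the individual opens equals the $\QQ$-invariant sublattice on the overlap. Using the explicit description \eqref{eq-explicit-monodromy}, which writes each simple-loop monodromy $T_C-\id$ as a rank-one nilpotent with image a single integer vector $m_x$ and kernel the annihilator of $n_y$, this becomes an elementary linear-algebra verification. Both the use of $\QQ$-coefficients (cf.\ Counter-Example~\ref{counterexample-conifold}) and the hypothesis $n\ge 3$ are essential here: rational coefficients erase torsion obstructions arising from integer-length factors implicit in \eqref{eq-explicit-monodromy}, while $n\ge 3$ guarantees enough transverse monodromy directions for the rational spans to cover the full invariant lattice on each overlap.
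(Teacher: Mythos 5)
Your reduction steps — shrinking $U$ to a local model with $p$ the origin, invoking the direct-sum decomposition of Remark~\ref{rem-dir-sum}, and radially retracting $U\setminus\{p\}$ onto the link $L$ — are sound and parallel to the paper's setup. However, the \v{C}ech computation you propose on $L$ has genuine gaps that are not cosmetic.

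First, your cover mixes two kinds of opens with incompatible roles. On $V_v := L\cap\pi_j^{-1}(U_v)$, the set $U_v$ is contractible and disjoint from $\Delta$, so $\iota_*\Lambda_\QQ|_{V_v}$ is the constant sheaf with the \emph{full} stalk $\Lambda_\QQ$; there is no monodromy-invariant sublattice involved. By contrast, on the tubular neighborhood $W$ of $L\cap\pi_j^{-1}(\Delta_j)$, the sheaf $\iota_*\Lambda_\QQ$ is \emph{not} constant — it is a nontrivial constructible sheaf with stalk jumps along the strata of $\Delta\cap L$. The sentence ``on each open set of this cover the sheaf is constant with value the corresponding monodromy-invariant sublattice'' is therefore false on both kinds of opens, and the subsequent ``verify that the sum of the two $\QQ$-invariant sublattices on the individual opens equals the $\QQ$-invariant sublattice on the overlap'' does not describe what the \v{C}ech complex actually looks like: the overlaps $V_v\cap V_w$ and $V_v\cap W$ are away from $\Delta$ and carry the full stalk (possibly on several components, which contribute additional summands).

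Second, and more fundamentally, for your \v{C}ech cohomology to agree with $H^1(L,\iota_*\Lambda_\QQ)$ you need the cover to be 1-acyclic, in particular $H^1(W,\iota_*\Lambda_\QQ)=0$. But $W$ is a neighborhood of the link of $\Delta$ in $L$ carrying exactly the kind of constructible sheaf you started with, one dimension down; establishing this vanishing is essentially an instance of the theorem itself and would require an induction on dimension that you have not set up. (The paper sidesteps this by never covering $L$: it filters the sheaf via $0\to G\to\iota_*\Lambda_\QQ\to Q\to 0$ and $0\to Q\to\check G\to S\to 0$ with $G,\check G$ constant and $S$ supported on $\Delta$, reduces to showing $\Gamma(U,S)\to\Gamma(U\setminus\{p\},S)$ is onto, and then \v{C}ech-computes the sheaf $S$ on $Y\times\check Y\setminus\{0\}$ with a cover indexed by faces of $\triang$ and $\check\triang$, identified with a tensor product of explicit complexes related to $H^\bullet(\PP^r,\Omega_{\PP^r})$ and $H^\bullet(\triang,\QQ)$; a Mayer--Vietoris step then handles the $\RR^c$-factor.)

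Third, your explanation of where $n\ge 3$ enters (``enough transverse monodromy directions'') is not an argument. In the paper, $n\ge4$ is used so that $H^1(S^{n-1},\QQ)=H^2(S^{n-1},\QQ)=0$, which kills the contributions of the constant subsheaf $G$ and reduces matters to $Q$ and then $S$; the boundary case $n=3$ fails this vanishing ($H^2(S^2)\ne 0$) and must be handled by a separate direct computation (Lemma~\ref{3d-lem_removevertex}). Your proposal does not address $n=3$ separately, nor does it give a precise role to the hypothesis.

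In summary, the initial reductions are fine, but the proposed \v{C}ech computation rests on an incorrect description of the sheaf on the cover, omits the needed acyclicity of the tubular-neighborhood open, and would require an unstated induction. As it stands, this is not a valid proof.
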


\begin{proof} 
If $p\in\partial B$, then $U\setminus \{p\}$ is contractible and the assertion directly follows from the fact that $\Delta$ meets $\partial B$ transversely, so assume $p\not\in\partial B$.
In view of Remark~\ref{rem-dir-sum}, it suffices to prove the theorem for $(B,\Delta)=(B_j\times\RR^c,Y_j\times \check Y_j\times\RR^c)$ where $(B_j,Y_j\times \check Y_j)$ is a symple model.
By Remark~\ref{wlog-p-origin}, it suffices to assume $p$ is the origin.
For $n=3$, the result is Lemma~\ref{3d-lem_removevertex} and for $n\ge 4$, this is Proposition~\ref{prop-H1-of-punctured-is-zero}.
\end{proof}

If $p\not\in\Delta$ then $\iota_*\Lambda_\QQ$ is a constant sheaf on $U\setminus \{p\}$. 
Since $U\setminus \{p\}$ is homotopic to $S^{n-1}$, Theorem~\ref{H1-of-punctured-is-zero} follows directly in this case. 
We thus assume from now on that $p\in\Delta$.
Furthermore, we will be able to prove the case $n\ge 4$ by a general machinery and then treat the case $n=3$ at the end of this section. The proof of the next statement will occupy most of this section.
\begin{proposition} 
\label{prop-H1-of-punctured-is-zero}
If $a,b>0$, $n=a+b+c\ge 4$, $(U,\Delta)$ is the product of a symple model $((\RR^{a})^*\times \RR^{b}),Y\times\check Y)$ with $\RR^c$ and $p\in\Delta$ the origin then $H^1(U\setminus\{p\},\iota_*\Lambda_\QQ)=0$.
\end{proposition}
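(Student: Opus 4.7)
The plan is to combine the product decomposition from Remark~\ref{rem-dir-sum} with Mayer--Vietoris and Künneth arguments to reduce the statement to the core case $c=0$ of a single symple model, and then to settle this core case via a Čech computation adapted to the charts of Construction~\ref{construction-symple-model}.

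First, by Remark~\ref{rem-dir-sum}, on $U\cong B_0\times\RR^c$ with $B_0:=(\RR^a)^*\times\RR^b$, the sheaf splits as $\iota_*\Lambda_\QQ\cong\pi^*\shG\oplus\QQ^c$, where $\pi\colon U\to B_0$ is the projection and $\shG:=(\iota_0)_*\Lambda_{0,\QQ}$. Since $U\setminus\{p\}\simeq S^{n-1}$ with $n-1\ge 3$, the constant summand contributes zero in degree one, so it remains to show $H^1(U\setminus\{p\},\pi^*\shG)=0$. I cover $U\setminus\{p\}$ by $W_+:=(B_0\setminus\{0\})\times\RR^c$ and $W_-:=B_0\times(\RR^c\setminus\{0\})$ (for $c\ge 1$; the case $c=0$ is the core case below). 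On each of $W_\pm$ and on $W_+\cap W_-$ the sheaf $\pi^*\shG$ is a pullback from one of $B_0$ or $B_0\setminus\{0\}$, and the complementary factor is either contractible or homotopy-equivalent to a sphere; the Künneth formula then expresses each cohomology in terms of $H^*(B_0,\shG)$ or $H^*(B_0\setminus\{0\},\shG)$ tensored with $H^*(S^{c-1},\QQ)$. A short diagram chase on the Mayer--Vietoris sequence, using Lemma~\ref{lemma-sections} to identify all degree-zero terms with monodromy-invariant subspaces of $\Lambda_\QQ$, reduces the whole computation in degree one to the pair of vanishing statements $H^1(B_0,\shG)=0$ and $H^1(B_0\setminus\{0\},\shG)=0$.

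For the core case we have a symple model $(B_0,Y\times\check Y)$ with $a+b\ge 3$, and the task is to prove both vanishings. For $H^1(B_0,\shG)$ I use the cover $\{U_v\}_{v\in\operatorname{Vert}(\triang)}$ of Construction~\ref{construction-symple-model} augmented with a small contractible neighborhood $U_*$ of the origin; each chart $U_v$ is contractible and disjoint from $\Delta$, so $\shG|_{U_v}$ is the constant sheaf $\Lambda_{0,\QQ}$, while $\shG(U_*)$ is the full monodromy-invariant subspace by Lemma~\ref{lemma-sections}. The Čech transition data on pairwise intersections are controlled by the explicit transvections $v\mapsto v+\langle v,n_y\rangle m_x$ of \eqref{eq-explicit-monodromy}. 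For $H^1(B_0\setminus\{0\},\shG)$ one runs essentially the same Čech computation but omits the chart $U_*$, the relevant cohomology being computed by the restriction of the resulting complex to the charts not covering the origin.

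The main obstacle I expect is carrying out the combinatorics of these Čech complexes: one must show that every $1$-cocycle with respect to the given cover is a coboundary, which translates into a linear-algebraic statement about the transvections. Over $\QQ$ this reduces to the spanning property of the edge vectors $\{m_x\}$ in $(\RR^a)^*$ and $\{n_y\}$ in $\RR^b$ (guaranteed because $\triang,\check\triang$ are full-dimensional simplices), which forces enough mixing of the transition data to eliminate any $1$-class. The same argument over $\ZZ$ would fail, which is consistent with the theorem being only stated rationally, and the hypothesis $n\ge 4$ ensures that at least one of $a,b$ is large enough for the key intersection $W_+\cap W_-$ to have the simply-connected sphere factor that makes the Künneth identifications clean.
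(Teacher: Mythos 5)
Your route is genuinely different from the paper's, which replaces $\iota_*\Lambda_\QQ$ by an auxiliary sheaf $S$ supported on $\Delta$ (via the two short exact sequences $0\to G\to\iota_*\Lambda_\QQ\to Q\to 0$ and $0\to Q\to\check G\to S\to 0$), reduces to the surjectivity of $\Gamma(U,S)\to\Gamma(U\setminus\{p\},S)$, and then computes $\Gamma((Y\times\check Y)\setminus\{0\},S)$ explicitly using a Čech cover of $(Y\times\check Y)\setminus\{0\}$ and the toric cohomology of $\PP^r$; Mayer--Vietoris on the $\RR^c$-factor enters only at that late stage, for the much simpler sheaf $S$. Your proposal, by contrast, keeps $\pi^*\shG$ and tries Mayer--Vietoris and Čech directly. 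The first gap is in your claimed reduction for $c\ge 1$: the hypotheses allow $a=b=1$ and $c\ge 2$ (e.g.\ $n=4$), and in this focus-focus case $B_0\setminus\{0\}\simeq S^1$ with unipotent monodromy $T$, so $H^1(B_0\setminus\{0\},\shG)=\coker(T-\id)\cong\QQ\neq 0$. Your parenthetical remark that ``$n\ge 4$ ensures at least one of $a,b$ is large enough'' is therefore false: $n\ge 4$ bounds $a+b+c$, not $a+b$. The vanishing of $H^1(U\setminus\{p\},\pi^*\shG)$ does still hold there, but only by a cancellation inside the Mayer--Vietoris long exact sequence (the map $H^1(W_+)\oplus H^1(W_-)\to H^1(W_+\cap W_-)$ is injective), so the reduction to ``two vanishing statements'' cannot be the form of the argument.

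The second and more serious gap is that the proposed Čech cover for the core $c=0$ case does not cover the space. Construction~\ref{construction-symple-model} records $B\setminus\Delta=\bigcup_v U_v$, i.e.\ $\{U_v\}_{v}$ (for $v$ running over the vertices of $\triang$) covers $B_0\setminus\Delta_0$ and misses all of $\Delta_0$; adding a small contractible neighborhood $U_*$ of the origin and then ``omitting'' it leaves $\Delta_0\setminus\{0\}$ entirely uncovered, so the resulting complex does not compute $H^\bullet(B_0\setminus\{0\},\shG)$. This is not cosmetic: $\shG=(\iota_0)_*\Lambda_\QQ$ is a \emph{non-derived} pushforward, and its cohomology on $B_0\setminus\{0\}$ differs from the cohomology of the local system $\Lambda_\QQ$ on $B_0\setminus\Delta_0$ precisely by contributions living along $\Delta_0\setminus\{0\}$. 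You would have to enlarge the cover to include open sets around the strata of $\Delta_0\setminus\{0\}$, and controlling the sections of $\shG$ on those sets is exactly where the content is --- the paper's identification of the resulting combinatorics with tensor products of complexes coming from $H^\bullet(\PP^r,\Omega_{\PP^r})$ and $H^\bullet(\triang,\QQ)$ is what makes the computation go through. Your closing heuristic about edge vectors spanning and ``mixing'' the transvections neither supplies a valid cover nor carries out the linear algebra, so the core case, which is where all the difficulty of the proposition resides, remains unproved.
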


In the situation of Proposition~\ref{prop-H1-of-punctured-is-zero}, consider the constant sheaf $G$ with stalk $\Gamma(U,\iota_*\Lambda_\QQ)$ on $U$ and set $Q = \iota_*\Lambda_\QQ/G$, so we have an exact sequence
\begin{equation} \label{punctured-ses-one}
0\ra G\ra \iota_*\Lambda_\QQ \ra Q\ra 0.
\end{equation}
By the particular form of the monodromy \eqref{eq-explicit-monodromy}, all stalks $Q_y$ for different $y\in U\setminus\Delta$ are identified unambiguously by parallel transport.
Let $\check G$ denote the constant sheaf on $U$ with stalk $Q_y$ for some $y\in U\setminus\Delta$. 
We have another exact sequence
\begin{equation} \label{punctured-ses-two}
0\ra Q \ra \check G \ra S\ra 0
\end{equation}
defining a constructible sheaf $S$ that is concentrated on $U\cap \Delta$.
For $n\ge 4$, since $U\setminus \{p\}$ is homotopic to $S^{n-1}$, we have
$$ H^1(U\setminus \{p\},G)=H^2(U\setminus \{p\},G)=0$$
and thus using the long exact sequences of cohomology for the short exact sequence \eqref{punctured-ses-one} yields
\begin{equation}
\label{h1lambdah1q}
H^1(U\setminus\{p\},\iota_*\Lambda_\QQ)=H^1(U\setminus\{p\},Q).
\end{equation}
Consider the long exact sequence of cohomology obtained from the short exact sequence \eqref{punctured-ses-two}. If $n\ge 3$ then 
$H^1(U\setminus \{p\},\check G)=0$,
so the sequence reads
\begin{equation}
\label{lesofsecondses}
\Gamma(U\setminus \{p\},Q)\ra \Gamma(U\setminus \{p\},\check G) \ra \Gamma(U\setminus \{p\},S)\ra H^1(U\setminus \{p\},Q)\ra 0
\end{equation}
and gives the following result.
\begin{proposition} 
\label{prop-surjective-to-S}
If $\dim B\ge 4$ then 
$$H^1(U\setminus\{p\},\iota_*\Lambda_\QQ)=0\quad \iff\quad  \Gamma(U\setminus \{p\},\check G) \ra \Gamma(U\setminus \{p\},S) \hbox{ is surjective}.$$
\end{proposition}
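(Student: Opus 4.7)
The proposition is essentially a bookkeeping assembly of the two reductions just carried out, so my plan is simply to combine them cleanly. The point is that both sides of the asserted equivalence have been expressed as the vanishing of a single group, namely $H^1(U\setminus\{p\},Q)$.

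First I would invoke equation \eqref{h1lambdah1q}: under the hypothesis $n\ge 4$, the constant sheaf $G$ has no $H^1$ or $H^2$ on $U\setminus\{p\}\simeq S^{n-1}$, so the long exact sequence of \eqref{punctured-ses-one} yields an isomorphism
\[
H^1(U\setminus\{p\},\iota_*\Lambda_\QQ)\;\cong\;H^1(U\setminus\{p\},Q).
\]
Thus the left-hand statement of the proposition is equivalent to $H^1(U\setminus\{p\},Q)=0$.

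Second I would appeal to the four-term exact sequence \eqref{lesofsecondses}, which is already established for $n\ge 3$ using $H^1(U\setminus\{p\},\check G)=0$ (again because $\check G$ is constant and $S^{n-1}$ has vanishing $H^1$ for $n\ge 3$). The last three terms
\[
\Gamma(U\setminus\{p\},\check G)\;\lra\;\Gamma(U\setminus\{p\},S)\;\lra\;H^1(U\setminus\{p\},Q)\;\lra\;0
\]
identify $H^1(U\setminus\{p\},Q)$ with the cokernel of the global sections map $\Gamma(U\setminus\{p\},\check G)\to\Gamma(U\setminus\{p\},S)$. Hence $H^1(U\setminus\{p\},Q)=0$ if and only if this map is surjective.

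Chaining the two equivalences proves the proposition. There is no real obstacle here; the substantive work of Proposition~\ref{prop-H1-of-punctured-is-zero} has been relocated to verifying the surjectivity of $\Gamma(U\setminus\{p\},\check G)\to\Gamma(U\setminus\{p\},S)$, which is precisely the reformulation the proposition provides and which will be handled separately using the explicit local description of $S$ along $\Delta\cap U$.
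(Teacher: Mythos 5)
Your proof is correct and matches the paper's argument exactly: the proposition is obtained by chaining the isomorphism \eqref{h1lambdah1q} (from the long exact sequence of \eqref{punctured-ses-one} using $H^1=H^2=0$ for the constant sheaf $G$ on $U\setminus\{p\}\simeq S^{n-1}$ when $n\ge 4$) with the cokernel description of $H^1(U\setminus\{p\},Q)$ given by the four-term sequence \eqref{lesofsecondses}. No substantive difference from the paper.
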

We abuse notation by using $\check G$ not only for the constant sheaf but also for its stalk at $p$.
\begin{lemma}
\label{lemma-coho-S-on-U}
$H^k(U,S)=0$ unless $k=0$ where it equals $\check G$.
\end{lemma}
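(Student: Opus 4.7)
The plan is to exploit the conic structure of $U$ with apex $p$ and show that $S$ is a conic constructible sheaf, then invoke the general principle that the cohomology of such a sheaf on a cone is concentrated in degree zero and given by its stalk at the apex.

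In the explicit model $(U, \Delta) = ((\RR^a)^* \times \RR^b \times \RR^c, Y \times \check Y \times \RR^c)$ with $p$ at the origin, the skeleta $Y$ and $\check Y$ are unions of cones with apex the origin, so $\Delta$ is itself a union of cones. Radial scaling $x \mapsto tx$ for $t \in (0,1]$ therefore provides a strong deformation retraction of $U$ onto $\{p\}$ that preserves the stratification of $U$ by strata of $\Delta$.

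Next I would verify that $\iota_*\Lambda_\QQ$ is conic with respect to this scaling. The monodromy formula \eqref{eq-explicit-monodromy} depends only on which stratum of $\Delta$ a small loop encircles, and radial scaling sends small loops to small loops of the same stratum-type. Hence the local system $\Lambda_\QQ$ on $U \setminus \Delta$, and thus $\iota_*\Lambda_\QQ$ on $U$, carries a canonical $\RR_{>0}$-equivariant structure. Since $G$ and $\check G$ are constant (hence trivially conic), the quotients $Q$ and $S$ are conic as well.

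The standard vanishing result for conic constructible sheaves on a cone then yields $H^k(U, S) \cong S_p$ concentrated in degree zero. To compute $S_p$, I observe that $p$ lies in the closure of every stratum of $\Delta \cap U$, so any punctured neighborhood of $p$ in $U \setminus \Delta$ already realizes the full monodromy group around $\Delta$; consequently $(\iota_*\Lambda_\QQ)_p = \Gamma(U, \iota_*\Lambda_\QQ) = G$, forcing $Q_p = 0$, and the short exact sequence \eqref{punctured-ses-two} gives $S_p = \check G$. The main obstacle is the precise technical formulation and verification of the conicity of $S$, which reduces to checking the $\RR_{>0}$-equivariance of the affine structure built in Construction~\ref{construction-symple-model} under radial scaling.
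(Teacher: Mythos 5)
Your proposal matches the paper's proof: both exploit the conic structure of $(U,\Delta)$ under radial scaling to conclude that $H^k(U,S)$ equals the stalk $(R^k\id_*S)_p$, which vanishes for $k>0$, and both compute $S_p=\check G$ via $(\iota_*\Lambda_\QQ)_p=\Gamma(U,\iota_*\Lambda_\QQ)=G$ and $Q_p=0$ together with the exact sequence~\eqref{punctured-ses-two}. The only difference is cosmetic: the paper spells out the identification of $H^k(U,S)$ with the stalk of the cohomology presheaf, whereas you invoke it as a standard conic-sheaf vanishing principle.
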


\begin{proof} 
We use that $\Delta$ is conical at $p$, i.e.~$\lambda\Delta\subset\Delta$ for $\lambda\in(0,1]$. 
Furthermore, $\iota_*\Lambda$, $Q$ and $S$ respect this conical structure, i.e.~for $f_\lambda\colon U\ra U, x\mapsto \lambda x$, we naturally identify $f_\lambda^{-1}S=S$. This implies
$H^k(U,S)=H^k(f^{-1}_\lambda U,S)$ for all $\lambda$ and thus $H^k(U,S)$ coincides with the stalk of the cohomology presheaf $V\mapsto H^k(V,S)$ at $p$. 
However this is zero for $k>0$ as it is $(R^k\id_*(S))_p$ which is obtained by taking cohomology on an injective resolution after applying $\id$. 
To determine the $k=0$ case, note that, by a similar conical argument and by the definition of $G$, we have 
$\Gamma(U,G)=\Gamma(U,\iota_*\Lambda_\QQ)=(\iota_*\Lambda_\QQ)_p$, so $Q_p=0$. Exactness of \eqref{punctured-ses-two} then gives $\Gamma(U,S)=S_p=\check G_p=\check G$.
\end{proof}

\begin{lemma} 
\label{lemma-local-sections} 
We have $H^k(U,\iota_*\Lambda)=0$ for $k>0$.
\end{lemma}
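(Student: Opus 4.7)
The plan is to imitate the proof of Lemma~\ref{lemma-coho-S-on-U} almost verbatim, after first verifying that $\iota_*\Lambda$ respects the conical structure of $(U,\Delta)$ at $p$. Since $(U,\Delta)$ is a product of a symple model with a trivial factor and $p$ may be taken to be the origin (Remark~\ref{wlog-p-origin}), both $U$ and $\Delta$ are invariant under radial scaling $f_\lambda\colon U\to U$, $x\mapsto \lambda x$ for $\lambda\in(0,1]$. The map $f_\lambda$ is an affine transformation with identity linear part, so it preserves the flat connection and hence the integral tangent local system $\Lambda$ on $U\setminus\Delta$. Pushing forward along $\iota$ gives a canonical identification $f_\lambda^{-1}(\iota_*\Lambda)=\iota_*\Lambda$.

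Viewing $f_\lambda$ as the homeomorphism $U\stackrel\sim\to \lambda U$, the identification above yields $H^k(U,\iota_*\Lambda)=H^k(\lambda U,\iota_*\Lambda)$ for every $\lambda\in(0,1]$. Taking the direct limit as $\lambda\to 0$ over the cofinal family of shrinking neighborhoods $\{\lambda U\}$ of $p$ identifies the common value with the stalk at $p$ of the presheaf $V\mapsto H^k(V,\iota_*\Lambda)$. For $k>0$ this stalk is $\big(R^k\id_*(\iota_*\Lambda)\big)_p=0$, which can be seen either by noting that $\id_*$ is exact or by computing on an injective resolution whose stalks are still acyclic.

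The argument has essentially no obstacle; the only point that requires a moment's reflection is the naturality $f_\lambda^{-1}(\iota_*\Lambda)=\iota_*\Lambda$, which is immediate from the preservation of $\Delta$ together with the fact that the linear part of the scaling is trivial. Note that exactly the same template proves the $k>0$ vanishing of $H^k(U,\iota_*\bigwedge^p\Lambda)$, $H^k(U,\iota_*\check\Lambda)$, and their wedge powers, since all of these sheaves are built functorially from $\Lambda$ and thus inherit the conical invariance.
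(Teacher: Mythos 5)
Your proof is correct and follows the paper's own argument essentially verbatim: the paper's proof of Lemma~\ref{lemma-local-sections} is a one-line reference to the conical-shape argument spelled out in the proof of Lemma~\ref{lemma-coho-S-on-U}, which is exactly what you reconstruct. One minor imprecision: the linear part of $f_\lambda\colon x\mapsto\lambda x$ is $\lambda\,\id$, not $\id$; what actually matters is that this scalar commutes with the monodromy representation (and $f_\lambda$ preserves the conical $\Delta$), so parallel transport is unaffected and one still has the natural identification $f_\lambda^{-1}(\iota_*\Lambda)=\iota_*\Lambda$.
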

\begin{proof}
The statement follows from the conical shape of $\Delta$ as explained in the proof of Lemma~\ref{lemma-coho-S-on-U}.
\end{proof}

The combination of Proposition~\ref{prop-surjective-to-S} and Lemma~\ref{lemma-coho-S-on-U} (and the fact that $\Gamma(U\setminus \{p\},\check G)=\check G$) gives the following.
\begin{proposition} 
\label{prop-surjective-to-S-2}
If $\dim B\ge 4$ then 
$$H^1(U\setminus\{p\},\iota_*\Lambda_\QQ)=0\quad \iff\quad  \Gamma(U,S) \ra \Gamma(U\setminus \{p\},S) \hbox{ is surjective}.$$
\end{proposition}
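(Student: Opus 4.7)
The plan is to splice together the two preceding results via a four-square commutative diagram of restriction and quotient maps. First I would invoke Proposition~\ref{prop-surjective-to-S} to reformulate the vanishing of $H^1(U\setminus\{p\},\iota_*\Lambda_\QQ)$ as surjectivity of the bottom arrow in
\[
\xymatrix@C=30pt{
\Gamma(U,\check G)\ar[r]\ar[d] & \Gamma(U,S)\ar[d] \\
\Gamma(U\setminus\{p\},\check G)\ar[r] & \Gamma(U\setminus\{p\},S),
}
\]
where the vertical maps are the restrictions coming from $U\setminus\{p\}\hookrightarrow U$ and the horizontal maps are induced by the surjection $\check G\to S$ in \eqref{punctured-ses-two}. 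The target of the equivalence stated in Proposition~\ref{prop-surjective-to-S-2} is surjectivity of the right vertical arrow, so it suffices to show that the top horizontal arrow and the left vertical arrow are both isomorphisms.

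For the left vertical arrow, the sheaf $\check G$ is by construction constant on $U$ with stalk $\check G$, and both $U$ and $U\setminus\{p\}$ are connected (this uses $n\ge 2$, which is ensured by $\dim B\ge 4$), so the restriction map $\Gamma(U,\check G)\to\Gamma(U\setminus\{p\},\check G)$ is simply the identity on $\check G$. For the top horizontal arrow, I apply $\Gamma(U,\cdot)$ to the short exact sequence \eqref{punctured-ses-two}: the conical-contraction argument used in the proof of Lemma~\ref{lemma-coho-S-on-U} identifies $\Gamma(U,Q)$ with the stalk $Q_p$, which vanishes since by definition the map $G_p\to(\iota_*\Lambda_\QQ)_p$ is an isomorphism. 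Combined with Lemma~\ref{lemma-coho-S-on-U}, which yields $\Gamma(U,S)=\check G$, the resulting exact sequence $0\to 0\to \check G\to\check G\to H^1(U,Q)\to\cdots$ forces the top horizontal arrow to be an isomorphism.

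With both of these in hand, commutativity of the square implies that surjectivity of the right vertical arrow is equivalent to surjectivity of the bottom horizontal arrow, which by Proposition~\ref{prop-surjective-to-S} is equivalent to $H^1(U\setminus\{p\},\iota_*\Lambda_\QQ)=0$. There is no real obstacle here; the whole content of the proposition is the bookkeeping identification $\Gamma(U,S)=\check G=\Gamma(U\setminus\{p\},\check G)$ that allows one to transport the surjectivity criterion from $\check G\to S$ on $U\setminus\{p\}$ to the restriction $\Gamma(U,S)\to\Gamma(U\setminus\{p\},S)$. The only thing to double-check, and what I would state explicitly before concluding, is that the hypothesis $\dim B\ge 4$ is used exclusively through Proposition~\ref{prop-surjective-to-S} (via the vanishing $H^1(U\setminus\{p\},G)=H^2(U\setminus\{p\},G)=0$); the remaining steps work whenever $U\setminus\{p\}$ is connected.
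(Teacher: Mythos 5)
Your proposal is correct and follows essentially the same route as the paper, which compresses the whole argument into one line (combine Proposition~\ref{prop-surjective-to-S}, Lemma~\ref{lemma-coho-S-on-U}, and the fact that $\Gamma(U\setminus\{p\},\check G)=\check G$). Your commutative square makes explicit the bookkeeping that the paper leaves implicit; the only place where you are slightly terse is the claim that the exact sequence $0\to 0\to\check G\to\check G\to H^1(U,Q)$ forces the top horizontal arrow to be an isomorphism, which relies on $\check G$ being a finite-dimensional $\QQ$-vector space (so an injective endomorphism is automatically surjective) or, equivalently, on the conical argument giving $H^1(U,Q)=0$ as in the proof of Lemma~\ref{lemma-coho-S-on-U}.
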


\begin{lemma} 
\label{lemma-Delta-described}
The stalk of $Q$ at a point $(x,y,z)\in Y\times\check Y\times \RR^c$ only depends on $y$ and is given by
$$\Hom(T_{\check\triang} / T_{\tau_y},\QQ)$$
where 
\begin{itemize}
\item $\tau_y$ is the face of $\check\triang$ dual to the minimal stratum of $\check Y$ containing $y$,
\item $T_{\sigma}$ denotes the tangent space of $\sigma$ over $\QQ$ for $\sigma=\tau_y$ or $\sigma=\check \triang$.
\end{itemize}
This entirely describes $Q$: if $y'$ is contained in a smaller stratum than $y$ then $T_{\tau_y}\subset T_{\tau_{y'}}$, so
$T_{\check\triang} / T_{\tau_{y}}\subset T_{\check\triang} / T_{\tau_{y'}}$
 and
$$\Hom(T_{\check\triang} / T_{\tau_{y'}},\QQ)\subset \Hom(T_{\check\triang} / T_{\tau_y},\QQ)$$
is the generization map $Q_{y'}\to Q_{y}$.
\end{lemma}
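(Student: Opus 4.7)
The plan is to compute $\iota_*\Lambda_\QQ$ and $G$ at $(x,y,z)$ as monodromy invariants via Lemma~\ref{lemma-sections} and then pass to the quotient $Q$. Write a tangent vector at $(x,y,z)$ as $v=(v_1,v_2,v_3)\in(\RR^a)^*\oplus\RR^b\oplus\RR^c$. By Lemma~\ref{lemma-sections} applied to a small contractible neighborhood $V$ of $(x,y,z)$, the stalk $(\iota_*\Lambda_\QQ)_{(x,y,z)}$ equals the invariants of $\Lambda_\QQ$ under the local monodromy group $\pi_1^{\mathrm{loc}}:=\pi_1(V\setminus\Delta)$, while $G_y=\Gamma(U,\iota_*\Lambda_\QQ)$ equals the invariants under the full $\pi_1(U\setminus\Delta)$.

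Next, by normal-fan duality, the maximal strata of $\Delta$ whose closure contains $(x,y,z)$ correspond to pairs $(m,n)$ where $m$ is an edge of $\triang$ contained in $\tau_x$ and $n$ is an edge of $\check\triang$ contained in $\tau_y$. Simple loops around these strata generate $\pi_1^{\mathrm{loc}}$, and by \eqref{eq-explicit-monodromy} the monodromy along such a loop is $v\mapsto v+\langle v_2,n\rangle\,(m,0,0)$ (trivial on the $\RR^c$ factor). Invariance forces $\langle v_2,n\rangle\,m=0$, and since $m\ne 0$ this reduces to $\langle v_2,n\rangle=0$. As $n$ ranges over the edges of $\tau_y$, which span $T_{\tau_y}\subset T_{\check\triang}=(\RR^b)^*$, the invariance condition becomes $v_2\in T_{\tau_y}^\perp$, with no constraint on $v_1$ or $v_3$. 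Hence
\[
(\iota_*\Lambda_\QQ)_{(x,y,z)}=(\RR^a)^*\oplus T_{\tau_y}^\perp\oplus\RR^c\quad\text{(rationally).}
\]
Repeating the argument over the generators coming from \emph{all} maximal strata of $\Delta$ in $U$ forces $\langle v_2,n\rangle=0$ for every edge $n$ of $\check\triang$. These edges span $T_{\check\triang}$, so $v_2=0$ and $G_y=(\RR^a)^*\oplus\RR^c$. Taking the quotient gives
\[
Q_{(x,y,z)}=T_{\tau_y}^\perp\cong\Hom(T_{\check\triang}/T_{\tau_y},\QQ),
\]
which visibly depends only on $y$.

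It remains to identify the generization map. If $y'$ lies in a smaller stratum than $y$ of $\check Y$, then the fan cone at $y'$ is a face of the one at $y$, so the dual faces satisfy $\tau_y\subset\tau_{y'}$; the resulting surjection $T_{\check\triang}/T_{\tau_y}\twoheadrightarrow T_{\check\triang}/T_{\tau_{y'}}$ yields, after applying $\Hom(-,\QQ)$, the inclusion $Q_{y'}\hookrightarrow Q_y$ in the lemma, and this inclusion agrees with the sheaf-theoretic generization map under the identifications above (a section invariant under a small monodromy group remains invariant under the smaller monodromy group at a more generic point). The main obstacle is keeping the normal-fan duality straight---matching strata of $Y,\check Y$ with faces of $\triang,\check\triang$ and ensuring that the edges entering the monodromy formula are precisely those contained in $\tau_x$ and $\tau_y$; once this dictionary is fixed, the computation is a direct application of linear algebra over $\QQ$.
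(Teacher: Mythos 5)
Correct, and this is exactly what the paper's one-line proof (``follows directly from the definition of $Q$ and \eqref{eq-explicit-monodromy}'') is asserting: identify the stalk $(\iota_*\Lambda_\QQ)_{(x,y,z)}$ with the local monodromy invariants via Lemma~\ref{lemma-sections}, read off the linear constraints from \eqref{eq-explicit-monodromy} using the normal-fan dictionary, and quotient by $G$ to get $T_{\tau_y}^\perp\cong\Hom(T_{\check\triang}/T_{\tau_y},\QQ)$. One small remark: your ``surjection $T_{\check\triang}/T_{\tau_y}\twoheadrightarrow T_{\check\triang}/T_{\tau_{y'}}$'' is the right statement, and the ``$\subset$'' appearing in the lemma's middle line is best read as that surjection (the dualized conclusion in the lemma agrees with yours).
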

\begin{proof} 
The description follows directly from the definition of $Q$ and \eqref{eq-explicit-monodromy}.
\end{proof}

Consequently, at the stalk at $(x,y,z)\in Y\times\check Y\times \RR^c$, the sequence \eqref{punctured-ses-two} reads
$$0 \ra \Hom(T_{\check\triang} / T_{\tau_y},\QQ) \ra \Hom(T_{\check\triang} ,\QQ) \ra \Hom(T_{\tau_y},\QQ) \ra 0$$
and we conclude the next lemma.
\begin{lemma} 
\label{sheafS-described}
The stalk of $S$ at a point $(x,y,z)\in  Y\times\check Y\times \RR^c$ is given by $\Hom(T_{\tau_y},\QQ)$.
\end{lemma}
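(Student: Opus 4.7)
The strategy is to extract the statement directly from the short exact sequence \eqref{punctured-ses-two} by computing stalks, using Lemma~\ref{lemma-Delta-described} to pin down $Q$ and reading off $\check G$ from its definition. Since $\check G$ is the constant sheaf on $U$ with stalk equal to $Q_y$ at some point $y\in U\setminus\Delta$, and since at such a general point $\tau_y$ is the empty face of $\check\triang$ so that $T_{\tau_y}=0$, Lemma~\ref{lemma-Delta-described} identifies this stalk as $\check G = \Hom(T_{\check\triang},\QQ)$.

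Taking stalks of \eqref{punctured-ses-two} at a point $(x,y,z)\in Y\times\check Y\times\RR^c$ and plugging in the descriptions of $Q_{(x,y,z)}$ and $\check G_{(x,y,z)}$ yields an exact sequence
\begin{equation*}
0\ra \Hom(T_{\check\triang}/T_{\tau_y},\QQ) \ra \Hom(T_{\check\triang},\QQ) \ra S_{(x,y,z)} \ra 0,
\end{equation*}
so that $S_{(x,y,z)}$ is identified as the cokernel of the first map. Dualizing the short exact sequence $0\to T_{\tau_y}\to T_{\check\triang}\to T_{\check\triang}/T_{\tau_y}\to 0$ of finite-dimensional $\QQ$-vector spaces, which is automatically exact on the right since $\QQ$ is a field, gives a canonical identification of that cokernel with $\Hom(T_{\tau_y},\QQ)$.

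The only thing to verify beyond this formal computation is that the isomorphism $S_{(x,y,z)}\cong \Hom(T_{\tau_y},\QQ)$ is compatible with the already-identified generization maps of $Q$ from Lemma~\ref{lemma-Delta-described}, so that these cokernels assemble into the claimed sheaf; this is immediate from the naturality of taking cokernels in the exact sequence \eqref{punctured-ses-two} of sheaves on $U$. There is no real obstacle here — the lemma is essentially a bookkeeping consequence of Lemma~\ref{lemma-Delta-described} and the definition of $S$ — so the ``hard step'' is simply making sure to state the identification of $\check G$ correctly in terms of $\Hom(T_{\check\triang},\QQ)$ before applying the snake/cokernel argument.
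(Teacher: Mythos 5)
Your proof is correct and follows essentially the same route as the paper: the paper simply records that, after inserting the stalk description from Lemma~\ref{lemma-Delta-described} and the identification $\check G\cong\Hom(T_{\check\triang},\QQ)$, the stalk of~\eqref{punctured-ses-two} at $(x,y,z)$ reads
$0\ra \Hom(T_{\check\triang}/T_{\tau_y},\QQ)\ra \Hom(T_{\check\triang},\QQ)\ra\Hom(T_{\tau_y},\QQ)\ra 0$,
which is exactly your dualization of $0\to T_{\tau_y}\to T_{\check\triang}\to T_{\check\triang}/T_{\tau_y}\to 0$. The only cosmetic slip is calling $\tau_y$ the ``empty face'' for $y\notin\check Y$; in the face/cone duality it is a vertex of $\check\triang$, but either way $T_{\tau_y}=0$ and the computation is unaffected.
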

In particular $S$ is the pullback of a sheaf (that we also call $S$) under the projection $Y\times\check Y\times \RR^c\ra \check Y$.

This lemma will enable us to compute the cohomology of $S$ on $U\setminus\{p\}$. The following complex will appear in the computation:
\begin{equation}
\label{complex-1}
0\ra \bigoplus_{{\tau\subset\check\triang}\atop{\dim \tau =0}} T_\tau \ra \bigoplus_{{\tau\subset\check\triang}\atop{\dim \tau =1}} T_\tau  \ra ...\ra T_{\check \triang}\ra 0
\end{equation}
where the maps are given by inclusions weighted by a sign that is obtained from an ordering of the vertices of $\check \triang$ (as usual in algebraic topology).
Set $C^i =\bigoplus_{{\tau\subset\check\triang}\atop{\dim \tau =i}} T_\tau$ and note that $C^0=0$.
\begin{lemma} 
We have
$H^k(C^\bullet)=0$ unless $k=1$ where it has rank one.
\end{lemma}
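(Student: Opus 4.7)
The plan is to realise $C^\bullet$ as the kernel term in a short exact sequence of cochain complexes
\begin{equation*}
0 \to C^\bullet \to \widetilde C^\bullet \to D^\bullet \to 0
\end{equation*}
built face by face from the short exact sequences
\begin{equation*}
0 \to T_\tau \to \QQ^{V(\tau)} \to \QQ \to 0,
\end{equation*}
where $V(\tau)$ is the vertex set of $\tau$, the right-hand map sends $(a_w)_{w\in V(\tau)}\mapsto \sum_w a_w$, and the kernel is $T_\tau$ realised in barycentric coordinates as those vectors whose coefficients sum to zero. Accordingly, set $\widetilde C^i := \bigoplus_{\dim\tau=i}\QQ^{V(\tau)}$ and $D^i := \bigoplus_{\dim\tau=i}\QQ$, with differentials induced from the inclusions $V(\tau)\subseteq V(\sigma)$ for $\tau\subset\sigma$ carrying the same simplicial signs that define the differential of $C^\bullet$. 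The complex $D^\bullet$ is the simplicial cochain complex of the topological disk $\check\triang$, so $H^0(D^\bullet)=\QQ$ and $H^k(D^\bullet)=0$ for $k>0$.

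The crux is to show $H^\bullet(\widetilde C^\bullet)=0$. For each vertex $v\in V(\check\triang)$, selecting the $e_v$-coordinate of $\QQ^{V(\tau)}$ for every face $\tau\ni v$ defines a subcomplex $\widetilde C^\bullet_v\subset\widetilde C^\bullet$, closed under the differential because any codimension-one superset $\sigma\supset\tau$ still contains $v$ and the transition maps send $e_v$ to $e_v$. These assemble into a direct sum decomposition $\widetilde C^\bullet=\bigoplus_{v\in V(\check\triang)}\widetilde C^\bullet_v$. The bijection $\tau\mapsto V(\tau)\setminus\{v\}$ between $i$-faces of $\check\triang$ containing $v$ and $(i-1)$-faces of the simplex $\check\triang^v$ on $V(\check\triang)\setminus\{v\}$ identifies $\widetilde C^\bullet_v$ with the augmented simplicial cochain complex of $\check\triang^v$ shifted by $+1$ in degree (the face $\{v\}$ playing the role of the augmentation term). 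Since $\check\triang^v$ is a simplex, and therefore contractible, this augmented complex is acyclic. Consequently each $\widetilde C^\bullet_v$, and hence $\widetilde C^\bullet$ itself, has vanishing cohomology.

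Feeding $H^\bullet(\widetilde C^\bullet)=0$ into the long exact sequence associated to the short exact sequence of complexes yields connecting isomorphisms $H^{i-1}(D^\bullet)\stackrel{\sim}\to H^{i}(C^\bullet)$ for every $i\ge 1$. Combined with the trivial vanishing $H^0(C^\bullet)=0$ forced by $C^0=0$, this gives $H^1(C^\bullet)\cong H^0(D^\bullet)=\QQ$ and $H^k(C^\bullet)=0$ for $k\ne 1$, as asserted.

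The only genuine bookkeeping concern is consistency of signs: one must verify that the short exact sequence of complexes is natural with respect to the differentials and that the vertex-wise splitting of $\widetilde C^\bullet$ is a splitting of complexes. Both are routine once a total ordering of $V(\check\triang)$ is fixed and the standard simplicial sign conventions are used throughout, so no new idea is needed beyond the acyclicity argument sketched above.
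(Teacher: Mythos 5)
Your argument is correct and takes a genuinely different route from the paper's. The paper reduces to $\check\triang$ a standard simplex (over $\QQ$ any two simplices of the same dimension give isomorphic complexes $C^\bullet$), then identifies $C^\bullet\otimes\CC$ with the $\ZZ^r$-degree-zero part of the \v{C}ech complex of the torus-invariant cover computing $H^\bullet(\PP^r,\Omega_{\PP^r})$, citing Danilov, and reads off the answer from $H^1(\PP^r,\Omega_{\PP^r})\cong\CC$ being the only nonvanishing group. You instead stay purely combinatorial: you resolve $T_\tau$ by the barycentric presentation $0\to T_\tau\to\QQ^{V(\tau)}\to\QQ\to0$ face-by-face, obtaining $0\to C^\bullet\to\widetilde C^\bullet\to D^\bullet\to0$, kill $\widetilde C^\bullet$ via the vertex-wise splitting $\widetilde C^\bullet=\bigoplus_v\widetilde C^\bullet_v$ with each $\widetilde C^\bullet_v$ a shifted augmented cochain complex of a simplex (hence acyclic), and transport $H^\bullet(D^\bullet)$ through the long exact sequence. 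Both proofs are sound. Your version has the advantage of being self-contained and elementary — no appeal to toric sheaf cohomology — and it makes the degree shift by $1$ transparent (it comes from the connecting homomorphism); the paper's version is shorter on the page but imports the $\Omega_{\PP^r}$ computation as a black box. A minor remark: the paper's normalization to the standard simplex is not needed in your argument, since the barycentric short exact sequence and the vertex-wise splitting are available for any simplex; the only input you need is that the vertices of a face are affinely independent, which is what makes $T_\tau\cong\ker(\QQ^{V(\tau)}\to\QQ)$.
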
 
\begin{proof} 
First note that any two complexes \eqref{complex-1} for different lattice simplices $\check\triang$ of the same dimension are isomorphic because taking tangent spaces forgets the information about the lattice and over $\QQ$ all simplices of the same dimension are isomorphic. We may thus assume $\check\triang$ is a standard simplex.
Since projective space $\PP^r$ is a toric variety, its cohomology for every torus-invariant sheaf is graded by the character lattice $\ZZ^r$. 
We are interested in the coherent sheaf of K\"ahler differentials $\Omega_{\PP^r}$.
By the finiteness of cohomology ranks, all non-zero cohomology of the \v{C}ech complex of the standard open cover of torus invariant charts for $\Omega_{\PP^r}$ is concentrated in $\ZZ^r$-degree zero, see \cite{Da78}.
The only non-trivial cohomology is $H^1(\PP^r,\Omega_{\PP^r})\cong\CC$.
The grading degree zero part of the \v{C}ech complex for $\Omega_{\PP^r}$ is identified with the complex \eqref{complex-1} after tensoring the latter with $\CC$, see \cite[Proposition 4.3]{Da78}.
This implies the assertion. 
\end{proof}
We also need a complex closely related to \eqref{complex-1}, namely
\begin{equation}
\label{complex-2}
0\ra \bigoplus_{{\tau\subset\triang}\atop{\dim \tau =0}} \QQ \ra \bigoplus_{{\tau\subset\triang}\atop{\dim \tau =1}} \QQ  \ra ...\ra \bigoplus_{\tau={\triang}} \QQ\ra 0
\end{equation}
 and since it computes $H^i(\triang,\QQ)$, denoting $D^i =\bigoplus_{{\tau\subset\triang}\atop{\dim \tau =i}} \QQ$, we get the following.
\begin{lemma} 
$H^k(D^\bullet)=0$ unless $k=0$ where it has rank one.
\end{lemma}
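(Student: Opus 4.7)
The plan is to recognize $D^\bullet$ as the (unreduced) simplicial cochain complex of $\triang$ viewed as a simplicial complex consisting of all of its faces, including $\triang$ itself. Concretely, a basis of $D^i$ is indexed by the $i$-dimensional faces $\tau \subset \triang$, and the differential $d\colon D^i \to D^{i+1}$, up to the sign coming from the ordering of vertices of $\triang$, sends the basis vector dual to $\tau$ to the signed sum of the basis vectors dual to the $(i+1)$-faces of $\triang$ that contain $\tau$. This is exactly the standard formula for the simplicial coboundary.

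Once this identification is made, the computation reduces to observing that the geometric realization of $\triang$ is contractible, so $H^0(\triang,\QQ) = \QQ$ and $H^k(\triang,\QQ) = 0$ for all $k \ge 1$. This gives the claim.

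If one prefers an elementary direct proof that avoids invoking contractibility as a black box, one can fix a vertex $v_0$ of $\triang$ and define a chain homotopy $h\colon D^i \to D^{i-1}$ by $h(e_\tau) = e_{\tau \setminus \{v_0\}}$ (with appropriate sign) when $v_0 \in \tau$ and $h(e_\tau) = 0$ otherwise. A direct calculation shows $dh + hd = \mathrm{id}$ away from degree $0$, while in degree $0$ the same identity exhibits the cokernel of $d\colon D^0 \to D^1$ as being one-dimensional, generated by the class of $e_{v_0}$. Since there is no obstacle here beyond routine bookkeeping of signs, I would present the result in one of these two forms rather than dwell on details.
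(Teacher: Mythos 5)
Your main argument is exactly the paper's: the paper simply observes that $D^\bullet$ "computes $H^i(\triang,\QQ)$" and lets contractibility of the simplex finish the job, which is precisely your first paragraph. The elementary chain-homotopy alternative you sketch (contracting toward a fixed vertex $v_0$) is also correct and standard, just not needed given the paper's one-line justification.
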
 
We take the duals $\check C^i = \Hom(C^{b-i},\QQ)$ and $\check D^i = \Hom(D^{a-i},\QQ)$ and obtain from the previous lemmata the following.
\begin{lemma} 
\begin{enumerate} 
\item $H^k(\check C^\bullet)=0$ unless $k=\dim\check\triang-1$ where it has rank one.
\item $H^k(\check D^\bullet)=0$ unless $k=\dim\triang$ where it has rank one.
\end{enumerate}
\end{lemma}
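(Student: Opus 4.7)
The plan is to reduce both statements to the previous two lemmas via $\QQ$-linear duality. Since $\Hom(-,\QQ)$ is exact on the category of $\QQ$-vector spaces and all the modules in $C^\bullet$ and $D^\bullet$ are finite-dimensional, dualizing a complex commutes (up to reindexing) with taking cohomology.

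More precisely, for any bounded cochain complex $A^\bullet$ of finite-dimensional $\QQ$-vector spaces concentrated in degrees $0,\ldots,m$, the reversed-dual complex defined by $B^i:=\Hom(A^{m-i},\QQ)$ with differentials $d_B^i:=\Hom(d_A^{m-i-1},\QQ)$ satisfies
$$H^k(B^\bullet)\cong \Hom(H^{m-k}(A^\bullet),\QQ)$$
for every $k$. This is the standard fact that $\Hom(-,\QQ)$ is an exact contravariant functor, together with the observation that the degrees get reversed by the definition.

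Applying this to $A^\bullet=C^\bullet$ with $m=b=\dim\check\triang$, we get $B^\bullet=\check C^\bullet$ and hence
$$H^k(\check C^\bullet)\cong \Hom(H^{b-k}(C^\bullet),\QQ).$$
By the previous lemma, $H^{b-k}(C^\bullet)$ is zero unless $b-k=1$, i.e.\ $k=\dim\check\triang-1$, where it has rank one. This gives (1). Applying the same argument to $A^\bullet=D^\bullet$ with $m=a=\dim\triang$ yields
$$H^k(\check D^\bullet)\cong \Hom(H^{a-k}(D^\bullet),\QQ),$$
which is zero unless $a-k=0$, i.e.\ $k=\dim\triang$, and has rank one in that degree, proving (2).

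There is no essential obstacle here; the only thing to verify carefully is the sign/indexing convention so that the reversed-dual of $C^\bullet$ really is $\check C^\bullet$ as defined in the text, but this is immediate from $\check C^i=\Hom(C^{b-i},\QQ)$.
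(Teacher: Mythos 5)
Your proof is correct and matches the paper's (implicit) argument exactly: the paper introduces $\check C^\bullet$ and $\check D^\bullet$ as degree-reversed $\QQ$-duals and states that the lemma follows "from the previous lemmata," which is precisely the exactness-of-$\Hom(-,\QQ)$ argument you spell out. Nothing is missing.
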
 
Finally, we consider the truncated dual of $\check D^\bullet$,
$$\check{\bar{D}}^j = \left\{\begin{array}{ll}\Hom(D^{a-j},\QQ),& a\neq j\\ 0,& a=j.\end{array}\right.$$
for which we conclude the following.
\begin{lemma} 
$H^k(\check{\bar{D}}^\bullet)= 0$ unless $k=\dim\triang-1$ where it has rank $|\op{vertices}(\triang)|-1=\dim \triang$.
\end{lemma}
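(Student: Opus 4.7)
My plan is to compare $\check{\bar{D}}^\bullet$ directly to $\check D^\bullet$, exploiting the fact that the two complexes agree in all degrees except the top one. Concretely, $\check{\bar{D}}^j = \check D^j$ for $j<a$ while $\check{\bar{D}}^a = 0$. In particular, the differential $\check{\bar{D}}^{a-1} \to \check{\bar{D}}^a$ is the zero map (whereas in $\check D^\bullet$ it is the map $\check D^{a-1}\to \check D^a$ dual to inclusion of codimension-one faces).

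First I would dispense with the easy degrees. For $k < a-1$, the portion of the two complexes in degrees $k-1, k, k+1$ is literally identical, so $H^k(\check{\bar{D}}^\bullet) = H^k(\check D^\bullet) = 0$ by the previous lemma. For $k \ge a$, we have $\check{\bar{D}}^k = 0$, so $H^k(\check{\bar{D}}^\bullet) = 0$ trivially.

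The only interesting case is $k = a-1$. Since the outgoing differential from $\check{\bar{D}}^{a-1}$ is zero, we get
\[
H^{a-1}(\check{\bar{D}}^\bullet) \;=\; \check D^{a-1} \big/ \im\bigl(\check D^{a-2} \to \check D^{a-1}\bigr).
\]
The key step is to identify this with $\im\bigl(\check D^{a-1}\to \check D^a\bigr)$. This uses the vanishing $H^{a-1}(\check D^\bullet)=0$ (from the previous lemma), which gives $\ker\bigl(\check D^{a-1}\to \check D^a\bigr) = \im\bigl(\check D^{a-2}\to \check D^{a-1}\bigr)$, hence the quotient on the right maps isomorphically onto the image of $\check D^{a-1}\to \check D^a$ inside $\check D^a$.

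Finally, since $H^a(\check D^\bullet) = \check D^a / \im\bigl(\check D^{a-1}\to \check D^a\bigr)$ has rank one by the previous lemma, the image has rank $\dim \check D^a - 1 = |{\op{vertices}}(\triang)| - 1 = \dim\triang$, which establishes the claim. The main point — really the only non-bookkeeping input — is the reinterpretation of $H^{a-1}$ of the truncated complex as the image of the top differential of $\check D^\bullet$; everything else is formal and follows immediately from the previous lemma.
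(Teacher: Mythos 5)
Your argument is correct: the paper states this lemma without proof (treating it as an immediate consequence of the preceding one), and your proposal supplies exactly the natural verification. The one non-bookkeeping observation — that the truncation turns the top differential off and thereby replaces $H^{a-1}$ by the quotient $\check D^{a-1}/\im(\check D^{a-2}\to\check D^{a-1})$, which by $H^{a-1}(\check D^\bullet)=0$ is identified with $\im(\check D^{a-1}\to\check D^a)$, a hyperplane in $\check D^a\cong\QQ^{a+1}$ since $H^a(\check D^\bullet)$ has rank one — is precisely the intended chain of reasoning, and your degree-by-degree bookkeeping is complete (recall $a>0$ is assumed, so no degenerate index occurs).
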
 
By tensoring the cohomologies of the factors, we obtain the cohomology of the tensor product:
\begin{lemma} 
\label{CD-lemma}
$H^k(\check{\bar{D}}^\bullet\otimes \check C^\bullet)= 0$ unless $k=\dim\triang+\dim\check\triang-2$ where it has rank $\dim \triang$.
\end{lemma}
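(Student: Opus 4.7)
The plan is to apply the algebraic Künneth formula to the tensor product of the two complexes, whose individual cohomologies have already been determined in the preceding lemmas. Since we work over $\QQ$ and each term of both $\check{\bar{D}}^\bullet$ and $\check C^\bullet$ is a finite-dimensional $\QQ$-vector space, flatness is automatic, and there are no $\Tor$ correction terms.

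Specifically, the Künneth theorem for bounded complexes of $\QQ$-vector spaces gives a natural isomorphism
\begin{equation*}
H^k(\check{\bar{D}}^\bullet\otimes \check C^\bullet) \;\cong\; \bigoplus_{i+j=k} H^i(\check{\bar{D}}^\bullet)\otimes_\QQ H^j(\check C^\bullet).
\end{equation*}
By the previous two lemmas, $H^i(\check{\bar{D}}^\bullet)$ vanishes except for $i = \dim\triang-1$, where it has rank $\dim\triang$, and $H^j(\check C^\bullet)$ vanishes except for $j = \dim\check\triang-1$, where it has rank one. Hence the sum on the right is zero unless $k=\dim\triang+\dim\check\triang-2$, in which case it has rank $\dim\triang\cdot 1=\dim\triang$.

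I do not expect any real obstacle; the only thing to make sure of is that we are entitled to invoke Künneth without derived corrections, but this is guaranteed because we are over a field. A reader worried about signs or conventions can alternatively prove the statement by hand via the spectral sequence of the double complex $\check{\bar{D}}^\bullet\otimes \check C^\bullet$: filtering by rows, the $E_1$-page has $\check{\bar{D}}^i\otimes H^j(\check C^\bullet)$, which collapses to a single column concentrated in $j=\dim\check\triang-1$ with entries $\check{\bar{D}}^i$, so $E_2 = H^i(\check{\bar{D}}^\bullet)\otimes H^{\dim\check\triang-1}(\check C^\bullet)$ and the spectral sequence degenerates there, giving the same conclusion.
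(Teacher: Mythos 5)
Your proof is correct and follows exactly the same route as the paper: the paper's own justification is the one-line remark ``By tensoring the cohomologies of the factors, we obtain the cohomology of the tensor product,'' which is precisely the Künneth formula over $\QQ$ that you invoke. Your added spectral-sequence alternative and the explicit note about the absence of $\Tor$ terms are fine but not needed, as the paper leaves them implicit.
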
 

Using Proposition~\ref{prop-surjective-to-S}, we obtain the special case $c=0$ of Proposition~\ref{prop-H1-of-punctured-is-zero} from the $k=0$ part of (1) of the following result.

\begin{proposition} 
\begin{enumerate}
\item
If $\dim\triang+\dim\check\triang\ge 4$ then
$$H^k(Y\times \check Y\setminus\{(0,0)\}, S)=\left\{\begin{array}{ll} \check G & k=0\\ H & k=\dim\triang+\dim\check\triang-3  \\0 &\hbox{otherwise} \end{array}\right.$$
where $H$ is defined by the equality and is of rank $\dim \triang$.
\item
If $\dim\triang+\dim\check\triang=3$ then $H^k(Y\times \check Y\setminus\{(0,0)\}, S)=0$ for $k>0$ and we have an exact sequence
$$0\ra \check G\ra H^0(Y\times \check Y\setminus\{(0,0)\}, S)\ra H\ra 0$$
where again $H$ is defined by the sequence and is of rank $\dim \triang$.
\item
Finally for $\dim\triang+\dim\check\triang=2$, $Y\times\check Y=\{0,0\}$, so by triviality $H^k(Y\times \check Y\setminus\{(0,0)\}, S)=0$ for all $k$.
\end{enumerate}
\label{prop-punctured-Y-checkY-computed}
\end{proposition}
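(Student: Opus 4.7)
My plan is to compute $H^*(Y\times\check Y\setminus\{(0,0)\},S)$ by combining K\"unneth and Mayer--Vietoris, reducing everything to the algebraic complexes studied in Lemma~\ref{CD-lemma} and preceding.

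By Lemma~\ref{sheafS-described}, $S$ is pulled back from a constructible sheaf on $\check Y$ along the second projection; I will call this sheaf $S'$. K\"unneth over $\QQ$ gives $H^*(U\times V,S)\cong H^*(U,\QQ)\otimes H^*(V,S')$ for open $U\subset Y$, $V\subset\check Y$, so I would first determine the four relevant factor cohomologies. Since $Y$ is conical at the origin, $H^*(Y,\QQ)=\QQ$ is concentrated in degree $0$; the same conical scaling argument as in Lemma~\ref{lemma-coho-S-on-U} gives $H^*(\check Y,S')=\check G$ concentrated in degree $0$. Next, $Y\setminus\{0\}$ retracts to the link of $0$ in $Y$, namely the $(a-2)$-skeleton of the simplicial sphere $\partial\triang$; for $a\ge 3$ this is $\QQ$ in degree $0$ and $\QQ^a$ in degree $a-2$, while for $a=2$ it is a disjoint union of $a+1$ points contributing $\QQ^a$ via reduced $\tilde H^0$. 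The crucial identification is for $H^*(\check Y\setminus\{0\},S')$: the link of $0$ in $\check Y$ has $i$-cells in bijection with faces $\tau\subset\check\triang$ of dimension $b-i-1$, the stalk of $S'$ on such a cell is $\Hom(T_\tau,\QQ)$, and the cellular coboundary agrees with the differential of $\check C^\bullet$ via the generization maps made explicit in Lemma~\ref{lemma-Delta-described}. The cellular cochain complex is therefore $\check C^\bullet$ with its degree-zero piece $\Hom(T_{\check\triang},\QQ)$ removed and the remainder reindexed down by one. Since $\check C^\bullet$ has all cohomology concentrated in top degree, this yields $H^0(\check Y\setminus\{0\},S')=\im(d^0_{\check C})=\check G$ and $H^{b-2}(\check Y\setminus\{0\},S')=\QQ$, with all other degrees vanishing.

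I would then apply Mayer--Vietoris to the open cover $Z^\circ=U_1\cup U_2$ with $U_1=(Y\setminus\{0\})\times\check Y$ and $U_2=Y\times(\check Y\setminus\{0\})$, feeding in the K\"unneth decompositions. The degree-zero piece forces $H^0(Z^\circ,S)=\check G$ because the map $H^0(U_1)\oplus H^0(U_2)=\check G\oplus\check G\to H^0(U_1\cap U_2)=\check G$ is the difference of two copies of the identity on $\check G$, with kernel the diagonal. For the top non-trivial degree, the K\"unneth summand $\tilde H^{a-2}(Y\setminus\{0\},\QQ)\otimes H^{b-2}(\check Y\setminus\{0\},S')=\QQ^a\otimes\QQ=\QQ^a$ sits in total degree $a+b-4$ of $H^*(U_1\cap U_2,S)$ with no matching contributions in the neighbouring degrees of $H^*(U_i,S)$ under the case (1) hypothesis $a+b\ge 4$; the Mayer--Vietoris connecting homomorphism therefore produces $H^{a+b-3}(Z^\circ,S)=\QQ^a$. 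Vanishing in the intermediate degrees comes from checking that the remaining K\"unneth restriction maps $H^*(U_1)\oplus H^*(U_2)\to H^*(U_1\cap U_2)$ are isomorphisms on every other summand, which reduces to the observation that $H^0(Y,\QQ)\to H^0(Y\setminus\{0\},\QQ)$ and $H^0(\check Y,S')\to H^0(\check Y\setminus\{0\},S')$ are the identity on their diagonal images.

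Case (3) is immediate since $Y\times\check Y=\{(0,0)\}$. For case (2) with $a+b=3$, either $Y$ or $\check Y$ is a single point (say $a=1$, so $Y=\{0\}$ and $Z^\circ=\check Y\setminus\{0\}$, or the symmetric situation), one of the factors $U_1,U_2$ degenerates, and a direct computation analogous to the above yields the extension $0\to\check G\to H^0(Z^\circ,S)\to H\to 0$ with $H$ of rank $\dim\triang$. The main obstacle I anticipate is the faithful identification of the cellular cochain complex of the link of $\check Y$ with $S'$-coefficients as the shifted and truncated $\check C^\bullet$ at the level of differentials, which requires carefully tracking signs and invoking the explicit generization maps from Lemma~\ref{lemma-Delta-described}; the bookkeeping for low-dimensional cases ($a$ or $b$ equal to $1$ or $2$) also needs some care because K\"unneth summands reorganize and may overlap.
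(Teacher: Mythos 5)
Your approach is correct, but it is genuinely different in structure from the paper's. The paper works with a single \v{C}ech cover $\frak U=\{Y_\tau\times\check Y_{\check\tau}\}$ indexed by pairs of faces $(\tau,\check\tau)$ with $(\dim\tau,\dim\check\tau)\in\{(a,b-1),(a-1,b)\}$, shows directly that $\check C^\bullet(\frak U,S)$ is the total complex $\check{\bar D}^\bullet\otimes\check C^\bullet$ with its degree-zero term $\check{\bar D}^0\otimes\check C^0=\check G$ deleted and the grading shifted down by one, and then reads off the answer from Lemma~\ref{CD-lemma}. You instead split the computation in two: you establish the factor cohomologies $H^*(Y,\QQ)$, $H^*(Y\setminus\{0\},\QQ)$, $H^*(\check Y,S')$, $H^*(\check Y\setminus\{0\},S')$ (the last two via the conical scaling argument and via identifying the link cochain complex as $\check C^{\bullet+1}$), and then assemble them by K\"unneth and a two-set Mayer--Vietoris with $U_1=(Y\setminus\{0\})\times\check Y$, $U_2=Y\times(\check Y\setminus\{0\})$. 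Both routes ultimately rest on the same algebraic facts about $\check C^\bullet$ (all cohomology in top degree) and a truncated simplicial sphere, so the key computation is shared. The paper's single-cover approach has the virtue of handling all degrees and all $(a,b)$ in one stroke, since the shift-and-truncate bookkeeping is transparent once the identification with $\check{\bar D}^\bullet\otimes\check C^\bullet$ is made; your K\"unneth/Mayer--Vietoris route is arguably more recognizable but, as you correctly flag, requires separate case analysis when $a\le 2$ or $b\le 2$ because the distinct K\"unneth contributions collapse into the same total degree (e.g.\ for $b=2$ the classes $\check G=\im(d^0_{\check C})$ and the rank-one class $H^{b-2}(\check Y\setminus\{0\},S')$ both live in degree $0$, and the top class of $U_1\cap U_2$ lands in degree $a-2$ where $U_1$ also contributes). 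Two small points to watch if you write this up fully: (i) the generization maps you invoke are those of $S$, not $Q$; Lemma~\ref{lemma-Delta-described} describes $Q$, and the corresponding surjective generization maps for $S$ follow from the sequence \eqref{punctured-ses-two}; and (ii) the K\"unneth isomorphism you use for $p_{\check Y}^*S'$ on a non-compact product needs a one-line justification (either a Leray spectral sequence argument over $\QQ$, or a retraction of $U_1,U_2,U_1\cap U_2$ onto compact links which reduces it to the compact case).
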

\begin{proof}
We use an open cover of $(Y\times\check Y)\setminus \{(0,0)\}$. 
Note that strata of $Y$ are indexed by faces of $\triang$ of dimension at least one and similarly strata of $\check Y$ are indexed by faces of $\check \triang$ of dimension at least one.
For $\tau\in\triang$ a face of dimension at least one, let $Y_\tau\subset Y$ be the open subset given by the the open star of the relative interior of the stratum of $Y$ corresponding to $\tau$, i.e.~$Y_\tau$ is the union of the relative interiors of the strata of $Y$ corresponding to the faces of $\triang$ containing $\tau$. 
We similarly define open subsets $\check Y_{\check\tau}\subset \check Y$ indexed by faces $\check\tau\subset\check\triang$ of dimension at least one.
The point of this is that
$$\frak{U}=\{Y_\tau\times Y_{\check\tau}\mid (\dim\tau,\dim\check\tau)\in \{(\dim\triang,\dim\check\triang-1),(\dim\triang-1,\dim\check\triang)\}\}$$
is an open cover of $(Y\times\check Y)\setminus \{(0,0)\}$.
We have 
$$(Y_{\tau_1}\times Y_{\check\tau_1})\cap (Y_{\tau_2}\times Y_{\check\tau_2})\cap ...\cap (Y_{\tau_k}\times Y_{\check\tau_k}) = (Y_{\tau_1\cap...\cap\tau_k}\times Y_{\check\tau_1\cap...\cap\check\tau_k})$$
and therefore identify the \v{C}ech complex $\check C^\bullet(\frak{U},S)$ of $S$ with respect to $\frak{U}$ with the complex
$$
\begin{array}{rcl}
\check C^k(\frak{U},S) 
&=& \bigoplus_{{(\tau_0,\check\tau_0),...,(\tau_k,\check\tau_k)}\atop{\hbox{\tiny pairwise distinct}}} \Gamma(Y_{\tau_1\cap...\cap\tau_k}\times Y_{\check\tau_1\cap...\cap\check\tau_k},S)\\
&=& \bigoplus_{I_k}\Gamma(Y_{\tau}\times Y_{\check\tau},S)\\
&=& \bigoplus_{I_k} \QQ\otimes_\QQ\Gamma(Y_{\check\tau},S)\\
&\overset{\hbox{\tiny Lemma~\ref{sheafS-described}}}{=}& \bigoplus_{I_k} \QQ\otimes_\QQ \Hom(T_{\check\tau},\QQ)\\
&=&  \bigoplus_{i+j=k+1}\check{\bar{D}}^i\otimes \check C^j
\end{array}
$$
where $I_k$ denotes the set of pairs $\{(\tau,\check\tau)\,\mid\,\tau\subset\triang,\check\tau\subset\check\triang, \codim\tau+\codim\check\tau=k+1\}$. 
Note that we need the truncation $\check{\bar{D}}^\bullet$ of $\check{{D}}^\bullet$ because $Y$ has no strata indexed by vertices of $\triang$. 
For $\check Y$ and $\check C^\bullet$, this is already taken care of because $C^0=0$.
Note that the zero'th term $\check{\bar{D}}^0\otimes \check C^0$ is not part of the above complex but all other terms of $\check{\bar{D}}^\bullet\otimes \check C^\bullet$ are.
Acknowledging the shift $k+1\leadsto k$, the assertion now follows from Lemma~\ref{CD-lemma} by noting that $\check{\bar{D}}^0\otimes \check C^0 = \Hom(T_{\check\Delta_1},\QQ)=\check G$.
\end{proof}

\begin{proof}[Proof of Proposition~\ref{prop-H1-of-punctured-is-zero}] We have $n\ge 4$.
As said before, Proposition~\ref{prop-surjective-to-S} and Proposition~\ref{prop-punctured-Y-checkY-computed} gives the $c=0$ case, so assume $c>0$. 
Again by Proposition~\ref{prop-surjective-to-S}, we are done if we show that $H^0(U\setminus \{p\},S)=\check G$.
Since $S$ has support on $\Delta$, we have 
$$H^0(U\setminus \{p\},S)=H^0((Y\times\check Y\times\RR^c)\setminus \{(0,0,0)\},S)$$
and we are going to compute this via the open cover consisting of the two sets $(Y\times\check Y\setminus\{(0,0)\})\times\RR^c$ and $Y\times\check Y\times(\RR^c\setminus\{0\})$.
Recall from Lemma~\ref{sheafS-described} that $S$ is a sheaf pulled back from the $\check Y$-factor. Mayer-Vietoris yields the Cartesian diagram
\begin{equation}
\label{MV-Cartesian}
\begin{gathered}
\xymatrix@C=30pt
{ 
H^0(U\setminus \{p\},S) \ar[rr]\ar[d]&&  H^0(\RR^c,\QQ)\otimes H^0(Y\times\check Y\setminus\{(0,0)\},S) \ar^{restriction\otimes\id}[d] \\
H^0(\RR^c\setminus\{0\},\QQ)\otimes\check G\ar^(.38){id\otimes restriction}[rr] && H^0(\RR^c\setminus\{0\},\QQ)\otimes H^0(Y\times\check Y\setminus\{(0,0)\},S).
}
\end{gathered}
\end{equation}
We now need to distinguish two cases. 

If $a=b=1$, then by (3) of Proposition~\ref{prop-punctured-Y-checkY-computed} the right column of \eqref{MV-Cartesian} is zero. 
Now $n\ge 4$ implies $c\ge 2$, hence $H^0(\RR^c\setminus\{0\},\QQ)\cong\QQ$ and therefore $H^0(U\setminus \{p\},S)=\check G$, so we are done.

If $a+b>2$ then we are in either (1) or (2) of Proposition~\ref{prop-punctured-Y-checkY-computed} both giving that the restriction
$$ H^0(Y\times\check Y,S) \ra H^0(Y\times\check Y\setminus\{(0,0)\},S)$$
is injective. The restriction $H^0(\RR^c,\QQ) \ra H^0(\RR^c\setminus\{0\},\QQ)$ is an isomorphism if $c>1$ and is the diagonal $\QQ\ra\QQ^2$ if $c=1$. 
Either way, \eqref{MV-Cartesian} implies $H^0(U\setminus \{p\},S)=\check G$, so we are done with proving Proposition~\ref{prop-H1-of-punctured-is-zero}.
\end{proof}

We finally treat the three-dimensional case.

\begin{lemma} 
\label{3d-lem_removevertex} 
Let $B$ be symple threefold, $p\in B$ and $U\subset B$ be a small open neighborhood of $p$ then
$H^1(U\setminus \{p\}, \iota_*\Lambda_\QQ)=0.$
\end{lemma}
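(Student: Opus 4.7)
The plan is to proceed analogously to Proposition~\ref{prop-H1-of-punctured-is-zero}, first reducing to a product of a symple model with a trivial factor via Remark~\ref{rem-dir-sum} and placing $p$ at the origin via Remark~\ref{wlog-p-origin}. For $n=3$ the cases to consider are $(a,b,c)\in\{(1,1,1),(2,1,0),(1,2,0)\}$; in each $\Delta$ consists of either a line or three coplanar rays through $p=0$. The earlier argument exploited $H^2(U\setminus\{p\},G)=0$, but in dimension three $U\setminus\{p\}\simeq S^2$, so this vanishing fails. I would therefore set aside the $G,Q,S$--machinery and apply Mayer--Vietoris directly.

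Cover $U\setminus\{p\}$ by $A:=U\setminus\Delta$ and a small tubular neighborhood $B$ of $\Delta\setminus\{p\}$ in $U\setminus\{p\}$. Then $B$ is a disjoint union of contractible tubes, one for each component of $\Delta\setminus\{p\}$, and $A\cap B$ is a corresponding disjoint union of circles. By Lemma~\ref{lemma-local-sections}, $H^k(B,\iota_*\Lambda_\QQ)=0$ for $k>0$; on each circle of $A\cap B$ the cohomology of $\Lambda_\QQ$ is the standard invariants/coinvariants of the local monodromy $T_i$; on $A$ it is derivations modulo inner derivations of $\pi_1(A)=\ZZ$ in case $(1,1,1)$ and $\pi_1(A)=F_2$ in the other two cases.

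The Mayer--Vietoris sequence then reduces the lemma to two claims: (a) that $H^0(A,\iota_*\Lambda_\QQ)\oplus H^0(B,\iota_*\Lambda_\QQ)\to H^0(A\cap B,\iota_*\Lambda_\QQ)$ is surjective, so that the preceding connecting map vanishes, and (b) that the restriction $\rho\colon H^1(A,\iota_*\Lambda_\QQ)\to H^1(A\cap B,\iota_*\Lambda_\QQ)$ is injective. Claim (a) is immediate because $H^0$ on each component of $B$ already surjects onto the corresponding invariants in $A\cap B$.

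The main obstacle is claim (b). For $(1,1,1)$ both components of $A\cap B$ represent the unique generator of $\pi_1(A)=\ZZ$, so $\rho$ is diagonal and hence injective. For $(2,1,0)$ and $(1,2,0)$ I would present $\pi_1(A)=F_2=\langle\gamma_1,\gamma_2\rangle$ with $\gamma_3=(\gamma_1\gamma_2)^{-1}$, use the explicit monodromy $T_i(v)=v+\langle v,n_i\rangle m_i$ from \eqref{eq-explicit-monodromy}, and verify by a short calculation that any derivation $c$ with $c(\gamma_i)\in\mathrm{image}(T_i-1)$ for $i=1,2,3$ is already inner. The essential input is the single linear relation $m_1+m_2+m_3=0$ among the edge vectors of $\triang$ (dually, $n_1+n_2+n_3=0$ for $\check\triang$): it forces the two independent parameters $\alpha,\beta$ coming from $c(\gamma_1),c(\gamma_2)$ to coincide and identifies the resulting one-parameter kernel precisely with the image of the inner derivations, completing the proof.
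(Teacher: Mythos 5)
Your argument is correct, and it takes a genuinely different route from the paper's. The paper covers $U\setminus\{p\}$ by three contractible open sets $U_1,U_2,U_3$ (one per leg of $\Delta$, with a two-component triple intersection), writes down the \v{C}ech differential $d_1$ as the explicit matrix $\bigl(\begin{smallmatrix}\id & -\id & \id\\ \id & -T_1 & T_2\end{smallmatrix}\bigr)$, and checks by Gaussian elimination that $\rk\ker d_1=\rk d_0$ in the two sub-cases (monodromy-invariant plane vs.\ line), with a two-chart cover for the edge/bivalent case. You instead decompose $U\setminus\{p\}$ into $A=U\setminus\Delta$ and a disjoint union of tubes $B$ around $\Delta\setminus\{p\}$ and reduce, via Mayer--Vietoris and the vanishings $H^{>0}(B)=0$, to injectivity of the restriction $H^1(A)\to H^1(A\cap B)$, which you then phrase as a statement about derivations of $\pi_1(A)$. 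The computational heart is the same — both arguments hinge on the monodromy formula \eqref{eq-explicit-monodromy} and the relation among the three edge vectors — but your packaging isolates more conceptually what is being used: that a derivation with values in the respective $\im(T_i-1)$ is forced to be inner. One small imprecision: the mechanism you describe (``forces $\alpha=\beta$'' giving a one-parameter kernel matching inner derivations) is exactly the sub-case $(a,b)=(2,1)$, where $\im(T_i-1)=\QQ m_i$ with $m_1,m_2$ independent and $m_1+m_2+m_3=0$. In the sub-case $(a,b)=(1,2)$ one has $\im(T_i-1)=\QQ m$ for all $i$, the third constraint is automatic, and both the kernel of derivation-level $\rho$ and the inner derivations are two-dimensional; the conclusion is the same but the bookkeeping differs. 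You should also note (as the paper does in one line) that the case $p\in\partial B$ is trivial, since $\Delta$ meets $\partial B$ transversally and $U\setminus\{p\}$ is then contractible.
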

\begin{proof} 
If $p\not\in\Delta$ then $\iota_*\Lambda_\QQ$ is a local system with stalk $\QQ^3$ on $U$ and the assertion follows from $H^1(S^2,\QQ)=0$ since $U\setminus \{p\}$ retracts to an $S^2$. 
The case $p\in\partial B$ is trivial. Note that $\Delta$ is a trivalent graph. 

First assume that $p\in\Delta$ is a (trivalent) vertex.
Let $e_1,e_2,e_3$ denote the three components of $(\Delta\cap U)\setminus \{p\})$. 
We use a \v{C}ech computation with a cover featuring three contractible open sets $U_1,U_2,U_3$ covering $U\setminus\{p\}$ where $U_j$ contains $e_j$ and is disjoint from $e_k$ for $k\neq j$. Such open sets can be chosen with the extra property that
$U_j\cap U_k$ is contractible for every $j,k$ and $U_1\cap U_2\cap U_3$ has two connected components each of which is contractible. 
Let $C^m=\bigoplus_{j_0<...<j_m} \Gamma(U_{j_0}\cap...\cap U_{i_m},\iota_*\Lambda_\QQ)$ denote the terms, so we have a \v{C}ech complex
$$0\ra C^0\stackrel{d_0}{\lra} C^1\stackrel{d_1}{\lra} C^2\ra 0$$
where $\rk C^0=3\cdot 2=6$, $\rk C^1=3\cdot 3=9$ and $\rk C^2=2\cdot 3=6$.
We want to show that $\rk\ker d_1=\rk d_0$. Note that $d_1$ is given by the matrix
\begin{equation} \label{matrix-3d}
\begin{pmatrix} 
\id &-\id&\id\\
\id &-T_1&T_2\\
\end{pmatrix}
\end{equation}
where the columns are given by bases of the sections of $\iota_*\Lambda_\QQ$ on $U_1\cap U_2$, $U_1\cap U_3$, $U_2\cap U_3$, the two rows correspond to the two components of $U_1\cap U_2\cap U_3$ and $T_i$ denotes the monodromy transformation around $e_i$. Using Gaussian elimination to get rid of the first column, we find that the kernel of \eqref{matrix-3d} has the same rank as the kernel of the $6\times 3$-matrix
$$
\begin{pmatrix} 
\id-T_1&T_2-\id
\end{pmatrix}
$$
which is $2\cdot 2=4$-dimensional in the case $a=2$ (monodromy invariant plane at $p$) and $5$-dimensional in the case $a=1$ (monodromy invariant line at $p$). 
On the other hand, since the restriction $H^0(U, \iota_*\Lambda)\ra H^0(U\setminus \{p\}, \iota_*\Lambda)$ is an isomorphism, $\dim\ker(d_0)=a$, so the rank of $d_0$ is $6-a$ and we conclude the assertion.

It remains to check the case when $p\in\Delta$ is bivalent or contained in an edge of $\Delta$. Let $e_1,e_2$ be the outgoing legs of $\Delta$ at $p$ and take the \v{C}ech cover of $U\setminus p$ consisting of the two open sets $U_1=U\setminus e_1$, $U_2=U\setminus e_2$ then $H^k(U_j,\iota_*\Lambda)=0$ for $k>0$ and $j=1,2$. Moreover, $H^0(U_1\cap U_2,\iota_*\Lambda)=H^0(U_j,\iota_*\Lambda)=\Gamma(U,\iota_*\Lambda)$ for $j=1,2$ which implies the result.
\end{proof}

%
%


\section{The pairing and its perfectness in degree one}
This section is occupied with proving the following result.

\begin{theorem} 
\label{pairingthm}
Let $(B,\P)$ be an integral affine manifold with singularities (as defined in \S\ref{sec-simple}) with $\iota\colon B\setminus\Delta\hra B$ the inclusion of the regular locus. 
For each $p,q\ge 0$, there is a commutative diagram
$$
\xymatrix@C=30pt
{ 
H_q(B,\iota_*\bigwedge^p\Lambda) \ar[r] & \Hom(H^q(B,\iota_*\bigwedge^p\check\Lambda),\ZZ) \\
H_q(B\setminus\Delta,\bigwedge^p\Lambda) \ar[r] \ar[u] & \Hom(H^q(B\setminus\Delta,\bigwedge^p\check\Lambda),\ZZ) \ar[u] 
}
$$
with vertical maps given by $\iota$. 
The horizontal maps generalize the horizontal maps in \eqref{eq-key-map}.
A similar diagram exists if we replace $\Lambda,\check\Lambda$ by $\Lambda_\QQ,\check\Lambda_\QQ$ and there is a natural map between these two diagrams giving a commutative cube.
If $B$ is symple, in the square diagram with $\Lambda_\QQ,\check\Lambda_\QQ$ the bottom horizontal map is an isomorphism and, if in addition $p=1$ and $q\le 1$, also the top horizontal map is an isomorphism.
\end{theorem}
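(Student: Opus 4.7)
The plan has four steps. First, I would construct the pairings and the diagram. On $B\setminus\Delta$ the natural evaluation $\bigwedge^p\Lambda \otimes \bigwedge^p\check\Lambda \to \ZZ$ is a pairing of dual local systems; applying $\iota_*$ and composing with sheaf multiplication and the augmentation yields a pairing $\iota_*\bigwedge^p\Lambda \otimes \iota_*\bigwedge^p\check\Lambda \to \ZZ_B$. Combined with Bredon's cap product and $H_0(B,\ZZ)\to\ZZ$ this gives the top pairing, and the bottom pairing is the analogous construction on $B\setminus\Delta$. Commutativity of the square follows from the naturality of cap product under $\iota$ together with $\iota^{-1}\iota_*\bigwedge^p\Lambda=\bigwedge^p\Lambda$. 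The $\QQ$-variant of the diagram and the comparison cube come from tensoring with $\QQ$ and reinterpreting cap product as a map into $\Hom$, exactly as in Lemma~\ref{lemma-cap-zero}.

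Second, the bottom map of the $\QQ$-square is an isomorphism by the universal coefficient theorem applied to the dual local systems $\bigwedge^p\Lambda_\QQ$, $\bigwedge^p\check\Lambda_\QQ$ of finite-dimensional $\QQ$-vector spaces on $B\setminus\Delta$. For the top map when $(p,q)=(1,0)$, I would cover $B$ by open neighborhoods $U_\alpha$ of product-of-symple-model type (Definition~\ref{definition-simple} and Remark~\ref{rem-dir-sum}) arranged so that all intersections are again of this shape; on each such open, the local pairing is perfect over $\QQ$ by Lemma~\ref{lemma-sections} via~\eqref{eq-key-map}. The Mayer--Vietoris sequence then realizes $H^0(B,\iota_*\check\Lambda_\QQ)$ as an equalizer and $H_0(B,\iota_*\Lambda_\QQ)$ as the $\QQ$-dual coequalizer, and a five-lemma globalizes local perfectness.

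Third, for $(p,q)=(1,1)$ I would refine the cover above and form matched \v{C}ech/simplicial double complexes computing $H^*(B,\iota_*\check\Lambda_\QQ)$ and $H_*(B,\iota_*\Lambda_\QQ)$, the latter via the simplicial techniques for constructible sheaf homology developed in the appendix. Each cover element can be arranged to be conical at its defining point, so Lemma~\ref{lemma-local-sections} forces the higher cohomology of $\iota_*\check\Lambda_\QQ$ on it to vanish, and symmetrically for $\iota_*\Lambda_\QQ$; whenever an intersection has the shape of a punctured conical neighborhood, Theorem~\ref{H1-of-punctured-is-zero} provides the parallel $H^1$-vanishing. These vanishings collapse both spectral sequences in total degree one, so that $H^1(B,\iota_*\check\Lambda_\QQ)$ and $H_1(B,\iota_*\Lambda_\QQ)$ are both computed by a \v{C}ech $E_2^{1,0}$-line of local sections that are perfectly paired over $\QQ$ by Lemma~\ref{lemma-sections}. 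Identifying the \v{C}ech coboundary for cohomology with the transpose of the chain boundary under these local dual pairings, via a chain-level cap product calculation in the style of Lemma~\ref{lemma-cap-zero}, lifts local perfectness to the desired global perfectness.

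The hard part will be Step 3: arranging simultaneous \v{C}ech/simplicial models so that the cap product is realized on chains as the sum of the local perfect pairings with matching signs, and then deploying Theorem~\ref{H1-of-punctured-is-zero} precisely where the \v{C}ech spectral sequence needs it to collapse in the relevant range. Counter-Example~\ref{counterexample-conifold} confirms that the symple hypothesis is indispensable: without it, $H^1$ of a punctured singular neighborhood can fail to vanish and the pairing fails to be perfect.
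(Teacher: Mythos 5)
Your overall architecture (local dualities over a cover globalized by a double complex, with Theorem~\ref{H1-of-punctured-is-zero} providing the crucial degree-one vanishing) is aligned with the paper's strategy via Lemma~\ref{iso-via-clstars} and Proposition~\ref{hypothesis-for-lemma}. Where you say the pairing is the cap product followed by $\op{tr}$, the paper instead constructs it via the chain-complex functor machinery and only afterwards identifies it with the cap product in Corollary~\ref{pairing-is-cap-product}; this is mostly cosmetic. Also mostly cosmetic is your use of a ``good cover by symple models'' versus the paper's triangulation-and-closed-stars setup, although the latter neatly sidesteps having to argue that intersections are again symple.

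The genuine gap is in Step~3, where you claim the collapse of the \emph{homology} spectral sequence follows ``symmetrically'' from Lemma~\ref{lemma-local-sections} and that Theorem~\ref{H1-of-punctured-is-zero} kicks in ``whenever an intersection has the shape of a punctured conical neighborhood.'' Neither is right. What is actually needed is $H_1(|\P_\tau|,\iota_*\Lambda_\QQ)=0$ for each closed star, and this does \emph{not} follow from contractibility of $|\P_\tau|$ (that only helps for constant coefficients) nor from the cohomological vanishing $H^{>0}(|\P_\tau|,\iota_*\Lambda_\QQ)=0$ of Lemma~\ref{lemma-local-sections}: homology and cohomology of a constructible sheaf on a contractible space are not mirror images of each other. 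The paper's Proposition~\ref{hypothesis-for-lemma} proves this vanishing by a dimension induction: Poincar\'e--Lefschetz for constructible sheaves (Theorem~\ref{cor-iso-ho-coho}, developed in the appendix precisely for this) together with Lemma~\ref{lemma-local-sections} gives $H_1(|\P_\tau|)\cong H_1(\partial|\P_\tau|)$; the inductively established pairing on the lower-dimensional $\partial|\P_\tau|$ turns that into $\Hom(H^1(\partial|\P_\tau|,\iota_*\check\Lambda_\QQ),\QQ)$; and \emph{this} cohomology is where Theorem~\ref{H1-of-punctured-is-zero} enters, since $\partial|\P_\tau|$ is homotopy equivalent to a punctured conical neighborhood. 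So Theorem~\ref{H1-of-punctured-is-zero} is applied to the boundary sphere of the closed star, not to an intersection in the cover, and the bridge from that $H^1$-vanishing to the required $H_1$-vanishing is a nontrivial duality-plus-induction argument you have not supplied. Until that bridge is built, your spectral sequence does not degenerate in the homology direction, and perfectness for $(p,q)=(1,1)$ is not established.
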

We remark that the top horizontal map in the theorem defines the pairing \eqref{eq-gen-pairing} and the statement about the isomorphism over $\QQ$ for $p=q=1$ proves Theorem~\ref{main-thm-intro}. See Theorem~\ref{thm-pairings-agree} and Corollary~\ref{pairing-is-cap-product} below for the connection to the cap and cup product.

\begin{cexample} 
\label{counterexample-conifold}
A point $p$ in an affine 3-manifold $B$ with singularities $\Delta$ is a \emph{conifold point} if a neighborhood $U$ of $p$ is homeomorphic to a local model defined similar to Definition~\ref{def-simple-model} where either $\triang$ or $\check\triang$ is a unit square and the other is a unit interval, see~\cite{CBM09,RS}. 
Consequently, $\Delta$ is a four-valent graph locally at $p$ and we require $p$ to be the four-valent vertex. 
By the same proof as that for Lemma~\ref{lemma-local-sections}, we get $H^1(U,\iota_*\Lambda)=H^1(U,\iota_*\check\Lambda)=0$. 
On the other hand, $\rk H_1(U,\iota_*\Lambda)=\rk H_1(U,\iota_*\check\Lambda)=1$ can be deduced via the arguments\footnote{Use that $\partial U$ is symple and apply \eqref{induction-iso} and \eqref{iso-in-front}.} in the proof of Proposition~\ref{hypothesis-for-lemma} combined with the knowledge that 
$\rk H^1(\partial U,\iota_*\Lambda)=\rk H^1(\partial U,\iota_*\check\Lambda)=1$ which in turn can be gained from an easy \v{C}ech computation. 
The statement of the top horizontal map in Theorem~\ref{pairingthm} being an isomorphism for $p=q=1$ over $\QQ$ thus fails for $B=U$ (because $B$ is not symple).
\end{cexample}

One would usually deduce perfectness from the sheaf level, i.e.~use the isomorphism $\Lambda\ra\Hom(\check\Lambda,\ZZ)$. The difficulty is that taking $\iota_*$ messes this up. 
Indeed, while there is a sheaf map $\varphi\colon \iota_*\Lambda\ra \Hom(\iota_*\check\Lambda,\ZZ)$, it is \emph{not} an isomorphism, e.g.~it is the zero map at a focus-focus-point. 
To get around this problem, we replace $\varphi$ by the top horizontal map in \eqref{eq-key-map} of Lemma~\ref{lemma-sections}, i.e.~we locally work with the \emph{homology} of closed stars rather than a map of sheaves, see Lemma~\ref{iso-via-clstars} below. 

In the symple case, \eqref{eq-gen-pairing} in degrees other than $p=q=1$ is presumably also perfect over $\QQ$ but I found this hard to prove. 

Here is how we are going to proceed: as the first step in Lemma~\ref{iso-via-clstars}, we introduce a general tool to produce a map from homology into the dual of cohomology. The Lemma doesn't seem to do this as stated but we are going to apply it for the situation where the target of the map $\varphi$ is the homology of the dual of a \v{C}ech complex for $\iota_*\bigwedge^p\check\Lambda$. If we make sure to satisfy the assumptions of the Lemma, most importantly, $H_1(\P_\tau,A)=0$, then the Lemma will imply Theorem~\ref{pairingthm}. 
The key ingredient about the vanishing first homology group is then provided by Proposition~\ref{hypothesis-for-lemma}. The proof mostly does a diagram chase where in turn the key ingredient is the vanishing of the first cohomology of punctured neighbourhoods that we have shown in Theorem~\ref{H1-of-punctured-is-zero} if $\dim B\ge 3$ and the two-dimensional case can be treated separately.

\begin{definition} 
\label{def-chain-complex}
Let $\P$ be a locally finite simplicial complex together with a total order of its vertices and let $A$ a contra-variant functor from $\P$ into groups, so a group $A_\tau$ for every $\tau\in\P$ with maps $A_\tau\ra A_{\tau'}$ whenever $\tau'\subset\tau$.
We call $(\P,A)$ a \emph{chain complex}. Let $\P^{[i]}\subset \P$ denote the subset of $i$-dimensional simplices.
The homology complex
$C_\bullet(\P,A)$ with terms
$C_i(\P,A)=\bigoplus_{\tau\in\P^{[i]}} A_\tau$
and differential $d_i\colon C_i(\P,A)\ra C_{i-1}(\P,A)$ is defined in the usual way by the summing over the maps $A_\tau\ra A_{\tau'}$ weighted by orientation for $\tau'\subset\tau$ with $\tau'\in\P^{[i-1]}, \tau\in\P^{[i]}$ using the ordering of vertices.
We denote the homology by $H_i(\P,A)$. 
\end{definition}

In view of Definition~\ref{def-constructible}, if $\Lambda$ is a $\P$-constructible sheaf on the topological realization $|\P|$ of $\P$ and the functor $A$ is defined by $\tau\mapsto \Gamma(U_\tau,\Lambda)$ for $U_\tau$ a small neighborhood of $\tau$ in $|\P|$, then
we obtain a natural isomorphism $H^\P_i(|\P|,\Lambda)=H_i(\P,A)$.  
If $\Lambda$ is $\P$-acyclic, Theorem~\ref{simplicial=singular} gives
\begin{equation}
\label{eq-iso-homology-chain-complex}
H_i(|\P|,\Lambda)=H_i(\P,A).
\end{equation} 

\begin{definition} 
\label{def-clstar}
Given a locally finite simplicial complex $\P$ and $\tau\in\P$, we define the \emph{closed star} of $\tau$ to be the finite simplicial complex given by
$$\P_\tau=\{\omega\in\P\mid \omega \hbox{ is a face of a simplex that contains }\tau\}.$$
\end{definition}

\begin{lemma} 
\label{iso-via-clstars}
Let $(\P,A)$ and $(\P,B)$ be chain complexes and assume $|\P_\tau|$ is contractible for all $\tau\in\P$.
Assume that we have, for each $\tau\in\P$, a map
$$\varphi_\tau\colon  H_0(\P_\tau,A) \ra B_\tau $$
which is compatible with inclusions. Then the set of $\varphi_\tau$ induces a composition
$$\varphi\colon H_i(\P,A)\ra H_i(\P,A')\ra H_i(\P,B)$$
for $A'$ the functor $\tau\mapsto H_0(\P_\tau,A)$ and for each $i\ge 0$.
If in addition each $\varphi_\tau$ is an isomorphism and
$$H_i(\P_\tau,A)=0$$
for $1\le i\le k$ then $\varphi$ is an isomorphism in degrees $i\le k$.
\end{lemma}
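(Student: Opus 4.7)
The plan is to compare $H_\bullet(\P,A)$ with $H_\bullet(\P,A')$ via a double complex whose two spectral sequences degenerate in opposite ways, and then to postcompose with the chain level isomorphism induced by the collection $\{\varphi_\tau\}$. Concretely I would introduce
\begin{equation*}
D_{p,q} \;=\; \bigoplus_{\tau \in \P^{[p]}}\bigoplus_{\sigma \in \P_\tau^{[q]}} A_\sigma,
\end{equation*}
observing that $\sigma\in \P_\tau$ iff $\tau\in \P_\sigma$, so the same group is also indexed by fixing $\sigma$ and letting $\tau$ range over $\P_\sigma^{[p]}$. The horizontal differential $d^h\colon D_{p,q}\to D_{p-1,q}$ is the chain differential of $\P_\sigma$ with constant coefficient $A_\sigma$ (for fixed $\sigma$); the vertical differential $d^v\colon D_{p,q}\to D_{p,q-1}$ is the chain differential of $(\P_\tau, A)$ (for fixed $\tau$). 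With the usual Koszul sign convention these anti-commute and $\mathrm{Tot}(D)$ is a genuine chain complex.

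Running the horizontal-first spectral sequence, each row becomes, for fixed $\sigma$, the simplicial chain complex of $\P_\sigma$ tensored with the constant group $A_\sigma$; the contractibility hypothesis on $|\P_\sigma|$ collapses it to $A_\sigma$ in degree $0$ and zero otherwise. The $E^1$ page therefore sits in the single column $E^1_{0,q} = C_q(\P,A)$, the surviving differential is $d^v$, and $E^2$ computes $H_q(\P,A)$ there. Convergence yields a first edge isomorphism $H_n(\mathrm{Tot}(D))\cong H_n(\P,A)$.

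The vertical-first spectral sequence has, for each fixed $\tau$, the column $C_\bullet(\P_\tau, A)$, so by the second hypothesis $E^1_{p,0} = C_p(\P,A')$, the rows $1\le q\le k$ vanish entirely, and $E^2_{p,0} = H_p(\P,A')$. In total degree $n\le k$, any other bidegree $(p, n-p)$ with $p<n$ lives in row $n-p\in [1,k]$ and therefore vanishes. An outgoing differential $d^r\colon E^r_{n,0}\to E^r_{n-r,r-1}$ with $r\ge 2$ targets either a vanishing row $r-1\le k$ or, when $r\ge k+2$, a negative column $n-r<0$; incoming differentials originate from bidegrees with negative $q$-coordinate. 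Hence $E^\infty_{n,0} = E^2_{n,0} = H_n(\P,A')$ accounts for the whole abutment in total degree $n$.

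Combining the two edge maps gives, for $n\le k$, the isomorphism $H_n(\P,A)\cong H_n(\mathrm{Tot}(D))\cong H_n(\P,A')$; this composition is the first arrow of the lemma, and it is defined in all degrees independently of the vanishing assumption. Postcomposing with the chain level isomorphism $\bigoplus_\tau A'_\tau\to \bigoplus_\tau B_\tau$ induced by the $\varphi_\tau$ recovers $\varphi$ and makes it an isomorphism in degrees $\le k$. The main obstacle I expect is bookkeeping: fixing consistent signs so that $d^h$ and $d^v$ anti-commute on the cross terms of $D_{p,q}$, and then verifying that the edge map $H_n(\P,A)\to H_n(\P,A')$ coincides with the naive map obtained from the natural transformation $A\to A'$ sending $a\in A_\tau$ to the class of its restriction at any vertex of $\tau$ inside $H_0(\P_\tau,A)$ (independence of the choice of vertex follows from $d_1$ of $C_\bullet(\P_\tau,A)$). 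Once those compatibilities are recorded, both spectral sequence collapses are immediate from the two stated hypotheses.
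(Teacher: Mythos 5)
Your double complex $D_{p,q}=\bigoplus_{\tau\in\P^{[p]}}\bigoplus_{\sigma\in\P_\tau^{[q]}}A_\sigma$ is exactly the double complex \eqref{eq-double-complex-homology} the paper uses, and your two spectral-sequence collapses (rows degenerating by contractibility of $|\P_\sigma|$, columns by the vanishing of $H_i(\P_\tau,A)$ for $1\le i\le k$) are precisely the content of the paper's maps $\varphi_2$ and $\varphi_1$ between the total complex and $C_\bullet(\P,A)$, $C_\bullet(\P,B)$. This is the same argument phrased in spectral-sequence rather than quasi-isomorphism language, so the proposal is correct and essentially identical to the paper's proof.
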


\begin{proof} 
Consider the double complex
\begin{equation}
\begin{gathered}
\label{eq-double-complex-homology}
\resizebox{0.9\textwidth}{!}{
\xymatrix@C=30pt
{ 
\vdots\ar[d]&\vdots\ar[d]&\vdots\ar[d]&\\
\bigoplus_{\tau\in\P^{[0]}} C_2(\P_\tau,A)\ar[d] & \bigoplus_{\tau\in\P^{[1]}} C_2(\P_\tau,A) \ar[l]\ar[d] & \bigoplus_{\tau\in\P^{[2]}} C_2(\P_\tau,A) \ar[l]\ar[d] &\ar[l]\dots\\
\bigoplus_{\tau\in\P^{[0]}} C_1(\P_\tau,A)\ar[d] & \bigoplus_{\tau\in\P^{[1]}} C_1(\P_\tau,A) \ar[l]\ar[d] & \bigoplus_{\tau\in\P^{[2]}} C_1(\P_\tau,A) \ar[l]\ar[d] &\ar[l]\dots\\
\bigoplus_{\tau\in\P^{[0]}} C_0(\P_\tau,A) & \bigoplus_{\tau\in\P^{[1]}} C_0(\P_\tau,A) \ar[l] & \bigoplus_{\tau\in\P^{[2]}} C_0(\P_\tau,A) \ar[l] & \ar[l]\dots
}
}
\end{gathered}
\end{equation}
where the vertical maps are the homology differentials and the horizontal maps are given by the injections $C_i(\P_{\tau},A)\subset C_i(\P_{\tau'},A)$ for $\tau'\subset\tau$ weighted by sign. 
We may view the double complex as the chain complex for the functor that associates to $\tau\in\P$ the summand $C_\bullet(\P_\tau,A)$ of the corresponding column.

The collection of $\varphi_\tau$ induces a map from the bottom row of the double complex to the complex $C_\bullet(\P,B)$. 
Moreover, if $T_\bullet$ denotes the total complex of the double complex, the described map constitutes a map of complexes $\varphi_1\colon T_\bullet\ra C_\bullet(\P,B)$.

We next look at the homology of the rows in the double complex. 
For fixed $\tau$, gathering all terms involving $A_\tau$, we obtain the homology complex of the closed star of $\tau$ with constant coefficients $A_\tau$. 
Since the closed star is contractible, the rows have homology concentrated in the left-most columns and together form the complex $C_\bullet(\P,A)$ in this column. 
We thus obtain another map of complexes $\varphi_2\colon T_\bullet\ra C_\bullet(\P,A)$ which a quasi-isomorphism. We now obtain the desired map $\varphi$
as the composition of $\varphi=\bar\varphi_1\circ\bar\varphi_2^{-1}$ where $\bar\varphi_1,\bar\varphi_2$ denote the map induced on homology by $\varphi_1,\varphi_2$ respectively.

The extra assumption that each $\varphi_\tau$ is an isomorphism and that $H_i(\P_\tau,A)=0$ for $1\le i\le k$ implies that $\bar\varphi_1$ is an isomorphism in degrees $\le k$ and thus implies the claim. 
\end{proof}

\begin{remark}
\label{remark-compatibility-subcomplex}
The assignment of $\varphi$ in Lemma~\ref{iso-via-clstars} is functorial, i.e.~$\varphi_{BC}\circ \varphi_{AB}=\varphi_{AC}$ when given another map of chain complexes $(\P,B)\ra (\P,C)$ and requiring $H_i(\P_\tau,B)=0$ for $1\le i\le k$.
The assignment of $\varphi$ is furthermore compatible with restriction to a subcomplex: let $\P'\subset\P$ be simplicial subcomplex whose closed stars are also contractible. 
Let $A',B'$ be the functors induced via restricting $A,B$ to $\P'$. Assume $H_i(\P'_\tau,A')=0$ for $1\le i\le k$. Then the following diagram commutes
$$
\xymatrix@C=30pt
{ 
H_i(\P',A')\ar^{\varphi'}[r]\ar[d] & H_i(\P',B')\ar[d] \\
H_i(\P,A)\ar^{\varphi}[r] & H_i(\P,B).
}
$$
\end{remark}

\begin{example}
\label{ex-pair-local-sys}
Assume $\shP$ is a locally finite simplicial complex with a total order of its vertices and whose topological realization is a topological manifold $M$. Assume that $|\shP_\tau|$ is contractible for each $\tau\in\shP$.
Let $L$ be a locally constant sheaf with stalks free finitely generated $\ZZ$-modules. 
Let $\check L=\Hom(L,\ZZ)$ denote the dual local system and let $W_\tau$ be the open star of $\tau$, i.e.~the interior of the closed star $|\shP_\tau|$.
The open sets $W_v$ with $v$ running over the vertices of $\shP$ give an open cover. 
For $v_1,...,v_k$ vertices, note that $W_{v_1}\cap...\cap W_{v_1}=W_\tau$  if $v_1,...,v_k$ form the simplex $\tau$ and $W_{v_1}\cap...\cap W_{v_1}=\emptyset$ otherwise.
Applying $\Hom(\cdot,\ZZ)$ to the \v{C}ech complex 
$$ \bigoplus_{\tau\in\P^{[0]}} \Gamma(W_\tau,\check L)\ra \bigoplus_{\tau\in\P^{[1]}} \Gamma(W_\tau,\check L)\ra \bigoplus_{\tau\in\P^{[2]}} \Gamma(W_\tau,\check L)\ra ...$$
yields the homology complex for the functor $B$ that sends $\tau\mapsto \Hom(\Gamma(W_\tau,\check L),\ZZ)$ in the sense of Definition~\ref{def-chain-complex}.
Consider the functor $A\colon  \tau\mapsto \Gamma(\tau, L)$, so we have an isomorphism of functors $A\ra B$, given by the tautological map $\Gamma(\tau, L)\ra \Hom(\Gamma(W_\tau,\check L),\ZZ)$.
The induced isomorphism in homology 
$\varphi\colon H_i(\shP,A)\ra H_i(\shP,B)$
agrees with the one from Lemma~\ref{iso-via-clstars}: indeed, $H_i(\P_\tau,A)=0$ holds for all $i>0$ since $L$ is constant on $\P_\tau$ and $\P_\tau$ contractible. 
Also $H_0(\P_\tau,A)=A_\tau$, so the map $\varphi_\tau$ in the Lemma is just the isomorphism $A_\tau\ra B_\tau$. 

The map $\varphi$ can be precomposed with the isomorphism \eqref{eq-iso-homology-chain-complex} using that $L$ is $\P$-acyclic.
The resulting isomorphism then may be postcomposed with the natural map $H_i(\shP,B)\ra \Hom(H^i(M,\check L),\ZZ)$, so that the entire composition yields the natural map $\alpha$ in the short exact sequence
 $$ 0\ra \operatorname{Ext}_\ZZ^1(H^{i+1}(M,\check L),\ZZ)\ra H_i(M,L)\stackrel{\alpha}\lra \Hom(H^i(M,\check L),\ZZ) \ra 0$$ 
and $\alpha$ is equivalent to the natural pairing
\begin{equation}
H_i(M, L)\otimes H^i(M,\check L)\ra\ZZ.
\label{eq-pairing}
\end{equation}
See \cite[Theorem~10.5]{Br97} and \cite[\S 12.2.1]{Cu13} for similar versions.
Explicitly, if $\beta=\sum_{\tau\in\shP^{[i]}} a_\tau$ for $a_\tau\in \Gamma(\tau,L)$ with only finitely many $a_\tau$ non-zero is an $i$-cycle representing an element in $H_i(M,L)$ and $s=(b_\tau)_{\tau\in\shP^{[i]}}$ with $b_\tau\in \Gamma(W_\tau,\check L)$ is a $i$-\v{C}ech-cocycle representing an element in $H^i(M,\check L)$ then the pairing of $\beta$ and $s$ under \eqref{eq-pairing} is explicitly given by
\begin{equation}
\langle\beta,s\rangle :=\sum_\tau \langle a_\tau,b_\tau\rangle
\end{equation}
where $\langle \, ,\,\rangle$ on the right hand side denotes the pairing $L\otimes\check L\ra \ZZ$.
\end{example}

\begin{lemma} 
\label{lemma-homotope-inside}
Let $B$ be a symple integral affine $n$-manifold with singularities $\Delta$ together with a triangulation $\P$ so that $\Delta$ is a simplicial subcomplex; in other words, $\P$ is a refinement of the natural stratification of $B$ given by the strata of $\Delta$. 
Let $|\P_\tau|\subset B$ denote the closed star of a simplex $\tau$ and $W_\tau:=\Int(|\P_\tau|)$ its interior. 
After replacing $\P$ with a barycentric subdivision if needed, we may assume each $|\P_\tau|$ strongly retracts to a point in its interior.
The natural map $H_i(W_\tau,\iota_*\bigwedge^p\Lambda)\ra H_i(|\P_\tau|,\iota_*\bigwedge^p\Lambda)$ is an isomorphism for each $i$. 
A similar statement holds for $\check\Lambda$ in place of $\Lambda$.
\end{lemma}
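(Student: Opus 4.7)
The plan is to pass to a purely simplicial computation via the framework of Appendix~\ref{general-constructible-sheaves}. Because $\Delta$ is a simplicial subcomplex of $\P$, the sheaf $\shF := \iota_*\bigwedge^p\Lambda$ is $\P$-constructible: its stalk on the relative interior of each open simplex is the appropriate monodromy-invariant subgroup provided by Lemma~\ref{lemma-sections}. Applying the conical vanishing argument of Lemma~\ref{lemma-local-sections} on each open star (which is conically retractible onto a single interior point) also gives $\P$-acyclicity of $\shF$.

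Granting $\P$-acyclicity, Theorem~\ref{simplicial=singular} identifies $H_i(|\P_\tau|,\shF)$ with the simplicial sheaf homology $H_i(\P_\tau, A)$ from Definition~\ref{def-chain-complex}, where $A$ is the functor $\sigma\mapsto \Gamma(U_\sigma,\shF)$ for a small neighborhood $U_\sigma$. The complement $|\P_\tau|\setminus W_\tau$ is the underlying space of the simplicial subcomplex $L := \{\sigma\in\P_\tau : \tau\not\subseteq\sigma\}$, so a relative version of the comparison theorem computes $H_i(W_\tau,\shF)$ via a corresponding simplicial chain complex built from $\P_\tau\setminus L$, and the natural map in the lemma is realized at the simplicial level as a chain map between these two models.

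It remains to show this chain map is a quasi-isomorphism. I would use the barycentric subdivision hypothesis to realize the strong deformation retraction of $|\P_\tau|$ onto a point $p\in W_\tau$ as a sequence of elementary simplicial collapses on $L$, each pairing a free face $\sigma\in L$ with a coface $\sigma*v\in \P_\tau\setminus L$ obtained by joining with a vertex on the way toward $\tau$. Summed over all pairs, these collapses furnish an explicit chain-level contracting homotopy killing the contribution of $L$. The main obstacle is sheaf compatibility: the contraction must intertwine with the generization maps of $A$ coming from the monodromy of $\Lambda$. This can be checked one elementary collapse at a time, where the collapse is supported in a single open star and is therefore realized by a short path in $B\setminus\Delta$; the symple hypothesis on $(B,\Delta)$ together with the explicit form of the monodromy in Definition~\ref{def-simple-model} ensures that parallel transport along such paths produces exactly the identifications implicit in $A$, so the required compatibility holds and chain homotopy invariance completes the argument.
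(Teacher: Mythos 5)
Your proposal contains two genuine gaps, and the overall route diverges fundamentally from what the lemma is actually asserting.

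First, the identification of a simplicial model for $H_i(W_\tau,\iota_*\bigwedge^p\Lambda)$ is incorrect as written. The complex $C_\bullet(\P_\tau\setminus L, A)$ you describe (chains supported on simplices containing $\tau$) is exactly $C^{\P_\tau}_\bullet(|\P_\tau|, |L|; \shF)$; by Theorem~\ref{simplicial=singular} it computes the \emph{relative} homology $H_i(|\P_\tau|, |L|; \shF)$, not $H_i(W_\tau, \shF)$. The open star $W_\tau$ is an open subset, not a $\triang$-subcomplex, so it is not covered by Theorem~\ref{simplicial=singular}, and the absolute homology of $W_\tau$ and the relative homology of the pair $(|\P_\tau|, |L|)$ are distinct groups in general. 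You would need a separate argument — which is precisely what the lemma supplies — to relate them.

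Second, and more seriously, your proposed chain-level contraction would prove too much. If the collapse you describe furnished a chain homotopy compatible with the functor $A$ and ``killed the contribution of $L$,'' then since the collapse ends at a point, it would show $H_i(|\P_\tau|, \iota_*\bigwedge^p\Lambda) = 0$ for all $i > 0$. But this vanishing is exactly the substance of Proposition~\ref{hypothesis-for-lemma} (for $i=1$), where it requires the full symple hypothesis, an induction on dimension, and the nontrivial vanishing theorem~\ref{H1-of-punctured-is-zero}; moreover, it \emph{fails} for non-symple singularities, as Counter-Example~\ref{counterexample-conifold} shows $\rk H_1(U,\iota_*\Lambda)=1$ near a conifold point. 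Sheaf homology with constructible coefficients is not homotopy-invariant, and the monodromy of $\Lambda$ around $\Delta$ inside $|\P_\tau|$ is precisely the obstruction: your remark that the symple hypothesis ``ensures compatibility with parallel transport'' glosses over the fact that the generization maps $A_\sigma \to A_{\sigma'}$ are generally not isomorphisms when passing from one stratum of $\Delta$ to another, so the elementary collapse pairs need not give invertible chain maps.

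The paper's actual proof is much more direct and avoids passing to a simplicial model for $W_\tau$. It works at the level of singular cycles: a cycle $\beta$ in $|\P_\tau|$ is a finite sum of singular simplices each decorated with a section of $\iota_*\bigwedge^p\Lambda$ defined on an open neighborhood $U_\tau$ of that simplex; flowing $\beta$ inward along the strong retraction by a sufficiently small amount keeps each simplex inside its $U_\tau$ (so the decorating sections extend over the motion) but pushes $\beta$ off $\partial|\P_\tau|$, and the trace of the retraction provides the homology between $\beta$ and the pushed-in cycle $\beta'$. The same small-motion argument applies to relations. The key is that the motion is tiny relative to the open cover of $\beta$ by the $U_\tau$'s, so no question of sheaf compatibility or collapsibility ever arises — the map is a quasi-isomorphism purely because cycles near the boundary can be wiggled inward without changing their class, not because either side vanishes.
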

\begin{proof} 
In order to show that the map is surjective, we pull an $i$-cycle $\beta$ that hits the boundary of $|\P_\tau|$ into the interior. 
The cycle $\beta$ is a finite sum of simplices $\tau$ each carrying a section $a_\tau$ of $\iota_*\bigwedge^p\Lambda$. The section $a_\tau$ is defined on some neighbourhood $U_\tau$ of $\tau$. The union of the sets $U_\tau$ forms an open cover of $\beta$. Since $|\P_\tau|$ strongly retracts to a point in its interior, we may move $\beta$ along such a retraction. For a sufficiently small movement towards the retraction, we have that each cell $\tau$ of $\beta$ is still contained in $U_\tau$ but doesn't meet the boundary of $|\P_\tau|$ anymore. The resulting cycle $\beta'$ differs from the original one by the boundary of a chain which is obtained as the trace of the retraction from $\beta$ to $\beta'$. In a similar fashion, also relations between cycles can be homotoped into $W_\tau$ which proves the map in the assertion is an isomorphism.
\end{proof}

\begin{proposition} 
\label{hypothesis-for-lemma}
Let $B$ be a symple integral affine $n$-manifold with singularities $\Delta$ together with a triangulation $\P$ so that $\Delta$ is a simplicial subcomplex.
We assume the triangulation is fine enough so that each closed star $|\P_\tau|$ is contractible and contained in a symple model.
For $\tau\in\P$, let $W_\tau:=\operatorname{int}(|\P_\tau|)$ denote the the open star of $\tau$ then there is a commutative diagram 
\begin{equation}
\label{eq-key-map-in-prop}
\begin{gathered}
\xymatrix@C=30pt
{ 
H_0(|\P_\tau|,\iota_*\bigwedge^p\Lambda)\ar[r]\ar[d] & \Hom(\Gamma(W_\tau,\iota_*\bigwedge^p\check\Lambda),\ZZ)\ar[d]\\
H_0(|\P_\tau|,\iota_*\bigwedge^p\Lambda_\QQ)\ar[r]  & \Hom(\Gamma(W_\tau,\iota_*\bigwedge^p\check\Lambda_\QQ),\QQ)
}
\end{gathered}
\end{equation}
and $H_1(|\P_\tau|,\iota_*\Lambda_\QQ)=0.$
Similar statements hold after swapping $\Lambda$ and $\check\Lambda$ in the assertions.
\end{proposition}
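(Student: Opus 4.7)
The first assertion, the commutative diagram \eqref{eq-key-map-in-prop}, I would obtain directly from Lemma~\ref{lemma-sections} applied to $U:=W_\tau$. The hypotheses that $|\P_\tau|$ is contractible and is contained in a symple model translate to exactly the setup of that lemma on $W_\tau$ (which, as the interior of $|\P_\tau|$, is itself a contractible open neighborhood inside a symple model). I then invoke Lemma~\ref{lemma-homotope-inside} to rewrite $H_0(|\P_\tau|,\iota_*\bigwedge^p\Lambda)$ as $H_0(W_\tau,\iota_*\bigwedge^p\Lambda)$, while $H^0(W_\tau,-)=\Gamma(W_\tau,-)$ is tautological. The swap of $\Lambda$ and $\check\Lambda$ is the parallel case of Lemma~\ref{lemma-sections}.

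To prove the vanishing $H_1(|\P_\tau|,\iota_*\Lambda_\QQ)=0$, my plan is to combine Poincar\'e-Lefschetz duality with Theorem~\ref{H1-of-punctured-is-zero}. Using Remark~\ref{wlog-p-origin} I would first arrange that the central point $p\in W_\tau$ to which $|\P_\tau|$ retracts is the origin of the ambient symple model, so that the conical-scaling argument of Lemma~\ref{lemma-local-sections} gives $H^k(|\P_\tau|,\iota_*\Lambda_\QQ)=0$ for all $k>0$. Theorem~\ref{thm-lefschetz} applied to the compact oriented PL manifold-with-boundary $|\P_\tau|$ then yields
$$H_1(|\P_\tau|,\iota_*\Lambda_\QQ)\cong H^{n-1}(|\P_\tau|,\partial|\P_\tau|;\iota_*\Lambda_\QQ),$$
and the long exact sequence of the pair, together with the previous vanishing, rewrites this for $n\ge 3$ as $H^{n-2}(\partial|\P_\tau|,\iota_*\Lambda_\QQ)$. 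Since $\Delta$ is conical at $p$, the radial deformation retraction of $W_\tau\setminus\{p\}$ onto $\partial|\P_\tau|$ is stratum-preserving, so this further identifies with $H^{n-2}(W_\tau\setminus\{p\},\iota_*\Lambda_\QQ)$.

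In dimension $n=3$ the target degree is exactly $1$ and the vanishing is the content of Theorem~\ref{H1-of-punctured-is-zero}. The dimension $n=2$ is treated separately and by hand: the long exact sequence of the pair now exhibits $H^1(|\P_\tau|,\partial|\P_\tau|;\iota_*\Lambda_\QQ)$ as the cokernel of the restriction $H^0(|\P_\tau|,\iota_*\Lambda_\QQ)\to H^0(\partial|\P_\tau|,\iota_*\Lambda_\QQ)$, and Lemma~\ref{lemma-sections} identifies both sides with the same space of monodromy invariants under the focus-focus monodromy (or with the full stalk if $p\notin\Delta$), the restriction being an isomorphism as verified in the introductory focus-focus computation.

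The hard part, I expect, will be the range $n\ge 4$, where Theorem~\ref{H1-of-punctured-is-zero} delivers the vanishing of $H^1$ but not directly that of $H^{n-2}$. My plan there is to prolong the diagram chase begun in the proof of Theorem~\ref{H1-of-punctured-is-zero}: run the long exact sequences attached to the two short exact sequences
$$0\to G\to\iota_*\Lambda_\QQ\to Q\to 0 \quad\text{and}\quad 0\to Q\to\check G\to S\to 0$$
on $W_\tau\setminus\{p\}$ in all degrees, feed in the complete computation of $H^\bullet(W_\tau\setminus\{p\},S)$ from Proposition~\ref{prop-punctured-Y-checkY-computed}, and use $H^1(W_\tau\setminus\{p\},\iota_*\Lambda_\QQ)=0$ as an anchor. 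The crux will be showing that the connecting homomorphism $H^{n-2}(W_\tau\setminus\{p\},Q)\to H^{n-1}(W_\tau\setminus\{p\},G)$ is injective; once this is in place the vanishing $H^{n-2}(W_\tau\setminus\{p\},\iota_*\Lambda_\QQ)=0$ and therefore the claim follow. The swapped statement with $\check\Lambda$ in place of $\Lambda$ is proved by the symmetric argument.
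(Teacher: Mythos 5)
Your proof of the commutative diagram \eqref{eq-key-map-in-prop} is correct and matches the paper's approach: Lemma~\ref{lemma-sections} applied to $W_\tau$, combined with the identification $H_0(|\P_\tau|,\cdot)\cong H_0(W_\tau,\cdot)$ from Lemma~\ref{lemma-homotope-inside}. Likewise your $n=2$ and $n=3$ arguments for the vanishing work: the reduction via Poincar\'e--Lefschetz and the long exact sequence of the pair to $H^{n-2}(W_\tau\setminus\{p\},\iota_*\Lambda_\QQ)$ is sound, and this group vanishes by Lemma~\ref{lemma-sections} (for $n=2$) and Theorem~\ref{H1-of-punctured-is-zero} (for $n=3$).

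For $n\geq 4$, however, your plan has a genuine gap that you yourself flag. You need $H^{n-2}(W_\tau\setminus\{p\},\iota_*\Lambda_\QQ)=0$ with $n-2\geq 2$, which is precisely the degree range \emph{not} covered by Theorem~\ref{H1-of-punctured-is-zero}. Your proposed fix is to prolong the diagram chase using the two short exact sequences, but this breaks down for two reasons. First, Proposition~\ref{prop-punctured-Y-checkY-computed} only computes $H^\bullet(Y\times\check Y\setminus\{(0,0)\},S)$; for $c>0$ one needs the Mayer--Vietoris analysis in the proof of Proposition~\ref{prop-H1-of-punctured-is-zero}, which is written only for degree $\leq 1$ and is not free to extend. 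Second and more seriously, the crux you identify — injectivity of the connecting map $H^{n-2}(W_\tau\setminus\{p\},Q)\to H^{n-1}(W_\tau\setminus\{p\},G)$ — is genuinely nontrivial. When $c=0$ one finds $H^{n-2}(W_\tau\setminus\{p\},Q)\cong H^{n-3}(W_\tau\setminus\{p\},S)$ has rank $\dim\triang$, and $H^{n-1}(W_\tau\setminus\{p\},G)\cong G$ also has rank $\dim\triang$; so the question is whether a particular map between two spaces of the same rank is injective, which requires tracking explicit cochain representatives through the boundary maps. You do not do this, and I do not see a short route to it.

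The paper circumvents this computation entirely by a different mechanism: it performs an \emph{induction on $n$}, applies the induction hypothesis (via Lemma~\ref{iso-via-clstars}) to the closed stars of a triangulation of the boundary sphere $\partial U$ to establish that the pairing
\begin{equation*}
H_k(\partial U,\iota_*\Lambda_\QQ)\longrightarrow \Hom(H^k(\partial U,\iota_*\check\Lambda_\QQ),\QQ)
\end{equation*}
is an isomorphism for $k=0,1$, and then uses Theorem~\ref{H1-of-punctured-is-zero} applied to $\check\Lambda$ to conclude the right-hand side vanishes and hence $H_1(\partial U)=0$; the chain of identifications $H_1(\partial U)\cong H_1(U)$ via Theorem~\ref{cor-iso-ho-coho} and Lemma~\ref{lemma-local-sections} then finishes the proof. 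The key insight you are missing is that the needed vanishing of $H_1$ should be deduced by first establishing the perfect pairing \emph{one dimension down}, on $\partial U$, using the very Lemma~\ref{iso-via-clstars} that the paper set up for this purpose — rather than by attempting a direct cohomology computation in degree $n-2$.
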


If $(B,\P)$ is a symple integral affine manifold with polyhedral decomposition $\P$ and straightened discriminant in the sense of \cite{logmirror1} then $(\P^\bary)^\bary$ gives a triangulation satisfying the assumptions of Proposition~\ref{hypothesis-for-lemma} if $B$ is compact. For non-compact $B$ this goes similarly noting that some infinite (but locally finite) triangulation is needed for the unbounded cells in $B$.

\begin{proof}[Proof of Proposition~\ref{hypothesis-for-lemma}] 
The existence of the diagram is provided by Lemma~\ref{lemma-sections} for $n>1$ and the case $n=1$ ($\Delta=\emptyset$) follows from Example~\ref{ex-pair-local-sys}.

We prove the statement that $H_1(|\P_\tau|,\iota_*\Lambda_\QQ)=0$. The case $n=1$ follows because $|\P_\tau|$ is contractible and $\iota_*\Lambda_\QQ$ constant, so assume $n\ge 2$. Note that $\partial|\P_\tau|$ is homeomorphic to a sphere also in the case where $\tau\subset\partial B$.
By Lemma~\ref{lemma-homotope-inside}, it suffices to prove $H_1(U,\iota_*\Lambda_\QQ)=0$ for $U$ a closed subset of $W_\tau$ that is obtained by removing an $\epsilon$-neighborhood of $\partial|\P_\tau|$ and with the property that it contains a set of generators of $H_1(W_\tau,\iota_*\Lambda_\QQ)$.
We may assume that $U$ is also triangulated (by a new triangulation $\shP$).
We are going to apply the induction assumption to show that $H_1(\partial U,\iota_*\Lambda_\QQ)=0$.
We view $\iota_*\Lambda_\QQ$ as the functor on $\shP$ given by
$\sigma\mapsto \Gamma(\sigma,\iota_*\Lambda_\QQ)$. Let $\partial\shP$ denote the triangulation of $\partial U$ induced from $\shP$.
By \eqref{eq-iso-homology-chain-complex}, we have $H_1(\partial U,\iota_*\Lambda_\QQ)=H_1(\partial\shP,\iota_*\Lambda_\QQ)$.
Let $\omega\in\partial\shP$ and $C$ be the closed star of $\omega$ in $\partial\shP$. We want to show that $H_1(C,\iota_*\Lambda_\QQ)=0$.
Let $X$ be the symple model that $|\P_\tau|$ is contained in by assumption. 
By Remark~\ref{rem-dir-sum}, we may assume $r=1$, i.e.~$\Delta\cap X$ is given by $Y\times\check Y$.
We claim that $X$ splits of a trivial factor on $C$ in the sense of the localization in Remark~\ref{wlog-p-origin}. 
Indeed, the transition from $|\P_\tau|$ to $U$ ensures that $\partial U$ misses the zero-stratum of $Y\times\check Y$. 
It also ensures that there is a unique minimal stratum of $Y\times\check Y$ met by $C$ which is at least one-dimensional (unless $C\cap\Delta=\emptyset$ in which case $H_1(C,\iota_*\Lambda_\QQ)=0$ is obvious). We therefore find a monodromy invariant vector $v$ so that the symple model $X$ splits as 
$X\cong \RR v\times X'$ in a neighborhood of $C$ in the sense of Remark~\ref{wlog-p-origin}.
We obtain an induced splitting 
$$\iota_*\Lambda_\QQ\cong \iota'_*\Lambda'_\QQ \oplus\QQ v$$ 
and for $\iota'_*\Lambda'_\QQ$ we obtain from the induction assumption that $H_1(C,\iota'_*\Lambda'_\QQ)=0$ and thus $H_1(C,\iota_*\Lambda_\QQ)=0$ for all closed stars $C$ in $\partial\shP$.
Plugging this into Lemma~\ref{iso-via-clstars} yields an isomorphism
\begin{equation}
\label{induction-iso}
H_k(\partial U,\iota_*\Lambda_\QQ) \ra \Hom(H^k(\partial U,\iota_*\check\Lambda),\QQ)
\end{equation}
for $k=0,1$.
Next consider the isomorphism of long exact sequences that we have by Theorem~\ref{cor-iso-ho-coho},
\begin{equation*}
\resizebox{\textwidth}{!}{
\xymatrix@C=30pt
{ 
\dots\ar[r]& H_k(\partial U) \ar[r]\ar^\sim[d] & H_k( U) \ar[r]\ar^\sim[d] & H_k(U,\partial U) \ar[r]\ar^\sim[d]& H_{k-1}(\partial U) \ar[r]\ar^\sim[d] &\dots\\
\dots\ar[r]& H^{n-(k+1)}(\partial U) \ar[r] & H^{n-k}( U,\partial U) \ar[r] & H^{n-k}(U) \ar[r] &H^{n-k}(\partial U) \ar[r] &\dots
}
}
\end{equation*}
where all coefficients are $\iota_*\Lambda_\QQ$, also in what follows next. 
Lemma~\ref{lemma-local-sections} says $H^{n-k}(U) = 0$ for $k\neq n$ 
and therefore
\begin{equation}
\label{iso-in-front}
H_k(\partial U)= H_k(U)
\end{equation}
holds for $k<n-1$. 
For $n>1$, Lemma~\ref{lemma-sections} says the restriction 
\begin{equation}
\label{iso-in-back}
H^0(U)\ra H^0(\partial U)
\end{equation}
is an isomorphism.
In order to show that $H_1(U)=0$ for $n\ge 3$, 
we insert \eqref{iso-in-front} and Theorem~\ref{H1-of-punctured-is-zero} into \eqref{induction-iso}.
For showing that $H_0(U) \ra \Hom(H^0(U),\QQ)$ is an isomorphism, we set $k=0$ and plug \eqref{iso-in-front} and \eqref{iso-in-back} into \eqref{induction-iso},
concluding the induction step for $n\ge 3$.

The case $n=2$ can be treated as follows; assume $n=2$. 
The case $k=0$ goes the same way as before when $n>2$. 
However, $\rk H^1(\partial U)\ge 1$ so we need a different argument for $k=1$. Regarding the isomorphism of exact sequences
\begin{equation*}
\resizebox{0.9\textwidth}{!}{
\xymatrix@C=30pt
{ 
\dots\ar[r] &H_{2}(U,\partial U) \ar[r]\ar^\sim[d] & H_1(\partial U) \ar[r]\ar^\sim[d] & H_1( U) \ar[r]\ar^\sim[d] & H_1(U,\partial U) \ar[r]\ar^\sim[d]& H_{0}(\partial U) \ar[r]\ar^\sim[d] &\dots\\
\dots\ar[r] &H^{0}(U) \ar^\sim[r] & H^{0}(\partial U) \ar[r] & H^{1}( U,\partial U) \ar[r] & \protect\underbrace{H^{1}(U)}_{=0} \ar[r] &H^{1}(\partial U) \ar[r] &\dots,
}
}
\end{equation*}
since the restriction $H^{0}(U)\ra H^{0}(\partial U)$ is an isomorphism by Lemma~\ref{lemma-sections}, the diagram gives $H_1(U)=0$ and we are done also with $n=2$.
\end{proof} 

\begin{proof}[Proof of Theorem~\ref{pairingthm}] 
The functor $\tau\mapsto \Hom(\Gamma(W_\tau,\iota_*\bigwedge^p\check\Lambda),\ZZ)$ is contra-variant and
the $k$th homology of the associated chain complex comes with a natural map to $\Hom(H^k(B,\iota_*\bigwedge^p\check\Lambda),\ZZ)$. This goes similarly over $\QQ$.
The horizontal maps in \eqref{eq-key-map} give maps of functors so that Lemma~\ref{iso-via-clstars} produces the top horizontal map of the diagram in the assertion both over $\ZZ$ and $\QQ$.

Let $T\supset \Delta$ be an open tubular neighborhood of $\Delta$ in $B$ so that $B\setminus T$ permits a locally finite triangulation (i.e.~form the neighborhood in the PL category). We use Lemma~\ref{lem-tube-is-mfd} to know that $B\setminus T$ is a topological manifold with boundary.
We claim that the natural map $H_q(B\setminus\Delta,\bigwedge^p\Lambda)\ra \Hom(H^q(B\setminus\Delta,\bigwedge^p\check\Lambda),\ZZ)$ is isomorphic to 
$H_q(B\setminus T,\bigwedge^p\Lambda)\ra \Hom(H^q(B\setminus T,\bigwedge^p\check\Lambda),\ZZ)$.
The isomorphism $H^q(B\setminus T,\bigwedge^p\check\Lambda)=H^q(B\setminus\Delta,\bigwedge^p\Lambda)$ follows by a \v{C}ech cover argument. 
The similar identification for the homology groups follows from the observation that we can push cycles as well as relations of cycles from $B\setminus\Delta$ into $B\setminus T$ similar to what we did in the proof of Lemma~\ref{lemma-homotope-inside}.
The map
$H_q(B\setminus T,\Lambda_\QQ)\ra \Hom(H^q(B\setminus T,\Lambda_\QQ),\QQ)$ agrees with the map $\alpha$ in Example~\ref{ex-pair-local-sys} so 
that the existence of the commutative diagram in the assertion of Theorem~\ref{pairingthm} follows from Remark~\ref{remark-compatibility-subcomplex}. This also proves that the bottom horizontal map is an isomorphism over $\QQ$.

To finally prove that the top horizontal map is an isomorphism over $\QQ$ when $p=1$ and $q\le 1$, we combine Proposition~\ref{hypothesis-for-lemma} with Lemma~\ref{iso-via-clstars}.
\end{proof} 

%
%

\section{Proofs for the intersection pairing theorems}
Let $B$ be an integral affine manifold with singularities $\Delta$ and $\iota\colon B\setminus \Delta \ra B$ the inclusion of the regular locus.
There is a map of constructible sheaves
$$\iota_*\bigwedge^{p_1}\Lambda\ \otimes\ \iota_*\bigwedge^{p_2}\Lambda\ \ra\ \iota_*\bigwedge^{p_1+p_2}\Lambda,$$
indeed by the characterization of sections of $\iota_*\bigwedge^p \Lambda$ via monodromy invariant sections of a nearby stalk in Lemma~\ref{lemma-sections}, the map is well-defined. 
If $B$ is $n$-dimensional and orientable then $\iota_*\bigwedge^{n}\Lambda\cong \ZZ$ and we may pick a generator $\Omega\in \Gamma(B,\iota_*\bigwedge^{n}\Lambda)$ by means of which we obtain a pairing
\begin{equation}
\label{eq-sheaf-pairing-perfect}
\iota_*\bigwedge^{p}\Lambda\ \otimes\ \iota_*\bigwedge^{n-p}\Lambda\ \ra\ \ZZ.
\end{equation}
\begin{lemma}
\label{lemma-sheaf-pairing-perfect}
The pairing \ref{eq-sheaf-pairing-perfect} is perfect, i.e.~it induces an isomorphism of sheaves
$\iota_*\bigwedge^{p}\Lambda\stackrel{\Omega}\lra \Hom(\iota_*\bigwedge^{n-p}\Lambda,\ZZ)=\iota_*\bigwedge^{n-p}\check\Lambda$.
\end{lemma}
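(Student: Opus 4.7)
The plan is to check the assertion stalkwise. The map is a morphism of sheaves on $B$, so it suffices to verify that it is an isomorphism on every stalk. On the regular locus $B\setminus\Delta$ the statement is essentially tautological: there the sheaves restrict to $\bigwedge^p\Lambda$ and $\bigwedge^{n-p}\Lambda$, local systems of finitely generated free abelian groups, and the wedge pairing $\bigwedge^p\Lambda \otimes \bigwedge^{n-p}\Lambda \to \bigwedge^n\Lambda$ is perfect; using $\Omega$ to trivialize $\bigwedge^n\Lambda$ converts this into a perfect pairing into $\ZZ$, yielding the desired isomorphism $\bigwedge^p\Lambda \cong \bigwedge^{n-p}\check\Lambda$ of local systems.

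For a point $y\in\Delta$, pick a sufficiently small connected neighborhood $U$ of $y$ and a nearby regular point $y_0 \in U\setminus\Delta$, and set $\pi_1:=\pi_1(U\setminus\Delta,y_0)$. By Lemma~\ref{lemma-sections} the stalks at $y$ of $\iota_*\bigwedge^p\Lambda$ and $\iota_*\bigwedge^{n-p}\check\Lambda$ are identified with the $\pi_1$-invariants $(\bigwedge^p\Lambda_{y_0})^{\pi_1}$ and $(\bigwedge^{n-p}\check\Lambda_{y_0})^{\pi_1}$ respectively. The crucial observation is that $\Omega$, being a \emph{global} generator of $\iota_*\bigwedge^n\Lambda$, restricts under Lemma~\ref{lemma-sections} to a $\pi_1$-invariant generator of $\bigwedge^n\Lambda_{y_0}$. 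Consequently the local monodromy representation $\pi_1 \to \GL(\Lambda_{y_0})$ factors through $\SL(\Lambda_{y_0})$, i.e.~acts trivially on $\bigwedge^n\Lambda_{y_0}$.

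With this in hand, the $\GL$-equivariant wedge pairing $\bigwedge^p\Lambda_{y_0}\otimes \bigwedge^{n-p}\Lambda_{y_0}\to \bigwedge^n\Lambda_{y_0}$ becomes, after composing with $\Omega^{-1}$, a $\pi_1$-equivariant perfect pairing of free abelian groups into the trivial $\pi_1$-module $\ZZ$. It therefore induces a $\pi_1$-equivariant isomorphism $\bigwedge^p\Lambda_{y_0} \stackrel{\sim}\to \bigwedge^{n-p}\check\Lambda_{y_0}$. The functor of $\pi_1$-invariants applied to this isomorphism yields $(\bigwedge^p\Lambda_{y_0})^{\pi_1}\stackrel{\sim}\to (\bigwedge^{n-p}\check\Lambda_{y_0})^{\pi_1}$, which is the isomorphism of stalks we sought. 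Unraveling the construction shows this stalkwise isomorphism is precisely the one induced by the pairing \eqref{eq-sheaf-pairing-perfect}, and the identification $\Hom(\iota_*\bigwedge^{n-p}\Lambda,\ZZ)=\iota_*\bigwedge^{n-p}\check\Lambda$ is checked by the same $\pi_1$-invariants mechanism: a section of the Hom sheaf over $U$ is a map of sheaves $\iota_*\bigwedge^{n-p}\Lambda|_U\to\ZZ_U$, which on sections is a $\ZZ$-linear map $(\bigwedge^{n-p}\Lambda_{y_0})^{\pi_1}\to\ZZ$, paired with by a $\pi_1$-invariant element of $\bigwedge^{n-p}\check\Lambda_{y_0}$.

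There is no real obstacle here; the only point that requires attention is the confirmation that the abstract local-system isomorphism produced via $\Omega$ genuinely coincides with the map induced by the pairing, but this is immediate from the construction of the pairing as $\iota_*$ of the wedge on $B\setminus\Delta$ together with the identification of sections by monodromy invariants.
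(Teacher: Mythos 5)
Your proof is correct and, at bottom, it is the same argument the paper gives, just carried out explicitly at the level of stalks. The paper's proof simply notes that the pairing is perfect on $B\setminus\Delta$, giving an isomorphism $\bigwedge^p\Lambda\to\bigwedge^{n-p}\check\Lambda$ of local systems there, and then applies $\iota_*$, invoking the identity $\iota_*\Hom(\bigwedge^{n-p}\Lambda,\ZZ)=\Hom(\iota_*\bigwedge^{n-p}\Lambda,\ZZ)$. Your write-up makes precisely the content of that last identity visible via the $\pi_1$-invariants description from Lemma~\ref{lemma-sections}, and correctly identifies orientability (a global, hence $\pi_1$-invariant, generator $\Omega$) as the reason the wedge pairing remains $\pi_1$-equivariant into the trivial module $\ZZ$, so that applying $(-)^{\pi_1}$ to the local-system isomorphism gives the stalk isomorphism. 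One small point of care in your last paragraph: a section over $U$ of $\shHom(\iota_*\bigwedge^{n-p}\Lambda,\ZZ)$ is a compatible \emph{family} of linear maps on sections over all $V\subset U$, not merely a linear map on $\Gamma(U,\cdot)$ — the compatibility is what forces the corresponding covector at a nearby regular base point to be $\pi_1$-invariant; without it the naive duality of invariants would fail (e.g.\ $\Hom(M^G,\ZZ)\ne(\check M)^G$ in general). You arrive at the right conclusion, but phrasing it as a compatible family would make the step airtight.
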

\begin{proof} 
The sheaf pairing is clearly perfect on $B\setminus \Delta$, so we obtain an isomorphism  
$\bigwedge^{p}\Lambda\ra \Hom(\bigwedge^{n-p}\Lambda,\ZZ)$. Note that $\Hom(\bigwedge^{n-p}\Lambda,\ZZ)=\bigwedge^{n-p}\check\Lambda$.
Taking $\iota_*$ and using $\iota_*\Hom(\bigwedge^{n-p}\Lambda,\ZZ)=\Hom(\iota_*\bigwedge^{n-p}\Lambda,\ZZ)$ gives the claim.
\end{proof}

\begin{theorem}
\label{thm-pairings-agree}
Assume $B$ is compact and oriented by $\Omega\in\Gamma(B,\iota_*\bigwedge^n\Lambda)$. Set $B^\circ=B\setminus\partial B$ and let $H^\bullet_c$ denote cohomology with compact support. For each $p,q$, there is a commutative diagram
\begin{equation}
\label{eq-pairings-agree}
\begin{gathered}
\xymatrix@C=30pt
{ 
H_q(B,\iota_*\bigwedge^p\Lambda) \ar[r]\ar^{\Omega}[d] & \Hom(H^q(B,\iota_*\bigwedge^p\check\Lambda),\ZZ)\ar^{\Omega}[d] \\
H^{n-q}_c(B^\circ,\iota_*\bigwedge^p\Lambda) \ar[r]  & \Hom(H^{q}(B,\iota_*\bigwedge^{n-p}\Lambda),\ZZ) 
}
\end{gathered}
\end{equation}
with left vertical map the isomorphism given by Theorem~\ref{cor-iso-ho-coho} using $H^{i}_c(B^\circ,\iota_*\bigwedge^p\Lambda)=H^{i}(B,\partial B;\iota_*\bigwedge^p\Lambda)$, bottom horizontal map obtained from the cup product via \eqref{eq-sheaf-pairing-perfect}, the right vertical map is the isomorphism given by contracting $\Omega$ and the top horizontal map is the top horizontal map of Theorem~\ref{pairingthm}.
\end{theorem}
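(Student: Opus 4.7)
The plan is to verify the commutativity of \eqref{eq-pairings-agree} by reducing to the classical cap--cup compatibility identity, realized at chain level on a triangulation $\shP$ of $B$ adapted to $\Delta$ and $\partial B$ fine enough that Proposition~\ref{hypothesis-for-lemma} applies.

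First, I would reinterpret the top horizontal pairing as the global cap product $\op{tr}\circ\cap$ obtained from the sheaf pairing $\iota_*\bigwedge^p\Lambda\otimes\iota_*\bigwedge^p\check\Lambda\to\ZZ$. Locally this identification is precisely the content of Lemma~\ref{lemma-cap-zero}, and by Remark~\ref{remark-compatibility-subcomplex} together with the Mayer--Vietoris argument used to construct the top map of Theorem~\ref{pairingthm} via Lemma~\ref{iso-via-clstars}, the local identification assembles into the desired global statement.

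Next, I would make the left vertical isomorphism of Theorem~\ref{cor-iso-ho-coho} explicit as cap product with a fundamental class $[B,\partial B]\in H_n(B,\partial B;\iota_*\bigwedge^n\Lambda)$ whose trivialization under $\Omega$ is the standard orientation class; this is the characterization used in the appendix. Dually, the right vertical map is, by construction, the isomorphism induced by the sheaf isomorphism $\iota_*\bigwedge^p\check\Lambda\cong\iota_*\bigwedge^{n-p}\Lambda$ of Lemma~\ref{lemma-sheaf-pairing-perfect}. With these identifications, the commutativity of \eqref{eq-pairings-agree} becomes the chain-level associativity
\[
[B,\partial B]\cap\bigl(D(\alpha)\cup\Omega(\beta)\bigr)
\;=\;\bigl([B,\partial B]\cap D(\alpha)\bigr)\cap\Omega(\beta)
\;=\;\alpha\cap\Omega(\beta)
\;=\;\alpha\cap\beta ,
\]
where the first equality is the standard cap--cup identity, the second uses that $D(\alpha)$ is characterized by $[B,\partial B]\cap D(\alpha)=\alpha$, and the last equality expresses precisely that the two sheaf pairings of the top and bottom rows are intertwined by Lemma~\ref{lemma-sheaf-pairing-perfect}. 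This reduces \eqref{eq-pairings-agree} to the cap--cup associativity for the simplicial sheaf (co)chain complexes of the appendix, an Alexander--Whitney computation that is local on $B$ and hence reduces to the classical case on $B\setminus\Delta$ where $\Lambda$ is a local system.

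The main obstacle, I expect, is the bookkeeping required to realize cap and cup simultaneously at chain level with constructible (rather than locally constant) coefficients so that the Alexander--Whitney formula and the associativity identity remain valid. Each of the three sheaf pairings involved---evaluation $\iota_*\bigwedge^p\Lambda\otimes\iota_*\bigwedge^p\check\Lambda\to\ZZ$, multiplication $\iota_*\bigwedge^p\Lambda\otimes\iota_*\bigwedge^{n-p}\Lambda\to\iota_*\bigwedge^n\Lambda$, and the trivialization by $\Omega$---is covered individually by the simplicial machinery of the appendix, but the associativity above requires a single model carrying all three with compatible diagonals. Once such a uniform model is pinned down, the argument reduces to checking that the three sheaf pairings agree on stalks after twisting by Lemma~\ref{lemma-sheaf-pairing-perfect}, which is transparent on $B\setminus\Delta$ and propagates to $B$ via $\iota_*$.
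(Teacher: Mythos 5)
Your plan takes a genuinely different route from the paper: you want to first identify the top horizontal map of Theorem~\ref{pairingthm} with the cap product pairing $\op{tr}\circ\cap$, then finish via cap--cup associativity and the characterization of Poincar\'e--Lefschetz as capping with the fundamental class. The paper instead proves commutativity of \eqref{eq-pairings-agree} directly (a local diagram on each closed star, passed to its boundary sphere, then globalized through the double complex \eqref{eq-double-complex-cohomology} and diagram \eqref{eq-final-diagram-for-comp}), and the identification of the top map with the cap product is deduced \emph{afterwards} in Corollary~\ref{pairing-is-cap-product}, using \eqref{eq-pairings-agree}, Corollary~\ref{PLmap-is-the-same}, and Bredon's cup--cap compatibility. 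So the fact you invoke as your step~1 is, in the paper, a consequence of the theorem you are trying to prove.

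The gap is therefore in your assembly step. Lemma~\ref{lemma-cap-zero} identifies the local map $\varphi_\tau$ of \eqref{eq-key-map} with $\op{tr}\circ\cap$ only in degree~$0$ on a contractible chart. Lemma~\ref{iso-via-clstars} and Remark~\ref{remark-compatibility-subcomplex} tell you how the $\varphi_\tau$ globalize to the top map of Theorem~\ref{pairingthm}, but they say nothing about how the \emph{singular cap product} globalizes, and these are a priori different constructions. To conclude the two global maps coincide you would need to show that the singular cap product on $H_q(B,\iota_*\bigwedge^p\Lambda)\otimes H^q(B,\iota_*\bigwedge^p\check\Lambda)$ is also computed by the closed-star double complex of Lemma~\ref{iso-via-clstars} applied to the degree-$0$ local caps --- an analogue of Example~\ref{ex-pair-local-sys} that the paper proves only for local systems on $B\setminus T$, not for $\iota_*$-pushed constructible sheaves on $B$. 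You flag this as ``bookkeeping,'' but it is the crux: the absence of a uniform chain model carrying the cap product, the cup product, and the Poincar\'e--Lefschetz map simultaneously for these constructible coefficients is precisely why the paper sidesteps Alexander--Whitney entirely and works with the \v{C}ech--simplicial double complex. Without establishing that compatibility (or proving the cap-product identification of Corollary~\ref{pairing-is-cap-product} by an argument independent of Theorem~\ref{thm-pairings-agree}), your reduction to cap--cup associativity is not justified.
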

\begin{remark}
A diagram similar to \eqref{eq-pairings-agree} exists also over $\QQ$ and if $B$ is symple, then we know for its top horizontal arrow to be an isomorphism for $p=q=1$ by Theorem~\ref{pairingthm}. Thus, the bottom horizontal map is also an isomorphism in this case. In particular, we conclude an equality of affine Hodge numbers $h^{n-1,1}=h^{1,n-1}$ entirely from the study of the affine geometry. That such an equality is expected to hold also for other degrees gives yet further evidence that the more general pairing \eqref{eq-gen-pairing} ought to be perfect in the symple case. 
On the other hand, it is known that $\dim\Gr_F^{1}H^3(X,\CC)\neq \dim\Gr_F^{2}H^3(X,\CC)$ holds for a conifold Calabi--Yau 3-fold $X$ where $F^\bullet$ is the Hodge filtration. We obtain a new perspective on Counter-Example~\ref{counterexample-conifold} if we assume that \eqref{affine-to-Hodge} is an isomorphism for a conifold (which is currently unknown).
\end{remark}
\begin{proof}[Proof of Theorem~\ref{thm-pairings-agree}] 
We pick a triangulation $\P$ of $B$ so that $\Delta$ is a simplicial subcomplex, that
$\iota_*\bigwedge^p \Lambda$ is $\P$-acyclic, each closed star $|\P_\tau|$ is contractible and $H^i(|\P_\tau|,\iota_*\bigwedge^p \Lambda)=0$ for $i>0$.
For $U=|\P_\tau|$ a closed star and $U^\circ$ its interior, consider the following diagram
\begin{equation}
\label{first-show-this-commutes}
\begin{gathered}
\xymatrix@C=30pt
{ 
H_0(U,\iota_*\bigwedge^p\Lambda) \ar[r]\ar^{\Omega}[d] & \Hom(\Gamma(U,\iota_*\bigwedge^p\check\Lambda),\ZZ)\ar^{\Omega}[d] \\
H^{n}_c(U^\circ,\iota_*\bigwedge^p\Lambda) \ar@{->}[r]  & \Hom(\Gamma(U,\iota_*\bigwedge^{n-p}\Lambda),\ZZ) 
}
\end{gathered}
\end{equation}
where the top horizontal map is the top horizontal map in \eqref{eq-key-map}, $H^\bullet_c$ denotes cohomology with compact support and in fact $H^{i}_c(U^\circ,\iota_*\bigwedge^p\Lambda)=H^{i}(U,\partial U;\iota_*\bigwedge^p\Lambda)$. 
Thus, for the left vertical map, we may take the suitable vertical isomorphism of Theorem~\ref{cor-iso-ho-coho}.
The bottom right term is $\Hom(\Gamma(U,\iota_*\bigwedge^{n-p}\Lambda),\ZZ)=\Gamma(U,\iota_*\bigwedge^{n-p}\check\Lambda)=\big(\iota_*\bigwedge^{n-p}\check\Lambda\big)_y$
for $y\in \tau$ a suitable point. Similarly for the top right one,
$\Hom(\Gamma(U,\iota_*\bigwedge^p\check\Lambda),\ZZ)=\Gamma(U,\iota_*\bigwedge^p\Lambda)=\big(\iota_*\bigwedge^p\Lambda\big)_y$, so we may take 
the isomorphism from Lemma~\ref{lemma-sheaf-pairing-perfect} for the right vertical map.

Now there is a unique map to use for the bottom horizontal arrow to make the diagram commute. 
We claim it is the one given by the cup product pairing 
\begin{equation}
\label{eq-bottom-pairing-map}
H^{n}_c(U^\circ,\iota_*\bigwedge^p\Lambda)\otimes H^0(U^\circ,\iota_*\bigwedge^{n-p}\Lambda)\ra H^{n}_c(U^\circ,\iota_*\bigwedge^{n}\Lambda)\underset{\Omega}{\cong}\ZZ
\end{equation}
that uses \eqref{eq-sheaf-pairing-perfect} and for the last isomorphism uses that $U^\circ$ is oriented and that $\Omega$ trivializes $\iota_*\bigwedge^{n}\Lambda$.
The map \ref{eq-bottom-pairing-map} is computed by taking an $n$-\v{C}ech-co-cycle $\alpha$ for a suitable cover of $(U,\partial U)$, applying the a global section
$\beta$ of $\iota_*\bigwedge^{n-p}\Lambda$ to it under \eqref{eq-sheaf-pairing-perfect} and then comparing the resulting co-cycle $\alpha\wedge\beta$ modulo co-boundaries with $\Omega\xi$ where $\xi\in H^{n}_c(U,\ZZ)$ stands for a representative of the class chosen by the orientation of $B$.
We claim that, under the two vertical isomorphisms, the corresponding procedure computes the top horizontal map. This will be slightly easier to see by passing (using the long exact sequence of the pair $(U,\partial U)$) through another isomorphic version of \ref{eq-bottom-pairing-map}, namely
\begin{equation}
\label{eq-bottom-pairing-map2}
H^{n-1}(\partial U,\iota_*\bigwedge^p\Lambda)\otimes H^0(\partial U,\iota_*\bigwedge^{n-p}\Lambda)\ra H^{n-1}(\partial U,\iota_*\bigwedge^{n}\Lambda)\cong\ZZ.
\end{equation}
Here we use $n\ge 2$ to have $H^i(U,\shF)=0$ for $i=n,n-1$ and $\shF=\iota_*\bigwedge^p\Lambda,\iota_*\bigwedge^{n}\Lambda$ (by Lemma~\ref{lemma-local-sections}) which we may do because the case $n=1$ is straightforward.
The same computation with $\alpha,\beta$ as before holds for \eqref{eq-bottom-pairing-map2}. Under the Poincar\'e--Lefschetz isomorphism (Theorem~\ref{cor-iso-ho-coho}) on the outer terms in  \eqref{eq-bottom-pairing-map2}, the map \eqref{eq-bottom-pairing-map2} becomes 
\begin{equation}
\label{eq-bottom-pairing-map3}
H_0(\partial U,\iota_*\bigwedge^p\Lambda)\otimes \Gamma(\partial U,\iota_*\bigwedge^{n-p}\Lambda)\ra H_0(\partial U,\iota_*\bigwedge^{n}\Lambda)=\ZZ\Omega.
\end{equation}
which is taking a representative of a homology class $\alpha\in H_0(\partial U,\iota_*\bigwedge^p\Lambda)$, applying 
$\beta\in \Gamma(\partial U,\iota_*\bigwedge^{n-p}\Lambda)$ to it and comparing the result with $\Omega$ supported on a point. 
After plugging $\iota_*\bigwedge^{n-p}\Lambda=\iota_*\bigwedge^{p}\check\Lambda$ into the middle term in \eqref{eq-bottom-pairing-map3}, 
one checks now that this agrees with how the top horizontal map in \eqref{eq-key-map} is constructed, so we do find that \eqref{first-show-this-commutes} commutes with the bottom map being the natural one.

For the remainder of the proof, we want to conclude the assertion from the commutativity of \eqref{first-show-this-commutes}. Consider the double complex
\begin{equation}
\label{eq-double-complex-cohomology}
\begin{gathered}
\resizebox{.9\textwidth}{!}{
\xymatrix@C=30pt
{ 
\vdots\ar[d]&\vdots\ar[d]\\
\bigoplus_{\tau\in\P^{[0]}} H^{n-2}(\Gr_F^{n-2}C^\bullet(\P_\tau,\partial\P_\tau))\ar[d] & \bigoplus_{\tau\in\P^{[1]}} H^{n-2}(\Gr_F^{n-2}(C^\bullet(\P_\tau,\partial\P_\tau)) \ar[l]\ar[d] & \ar[l]\dots\\
\bigoplus_{\tau\in\P^{[0]}} H^{n-1}(\Gr_F^{n-2}C^\bullet(\P_\tau,\partial\P_\tau))\ar[d] & \bigoplus_{\tau\in\P^{[1]}} H^{n-1}(\Gr_F^{n-2}(C^\bullet(\P_\tau,\partial\P_\tau)) \ar[l]\ar[d] & \ar[l]\dots\\
\bigoplus_{\tau\in\P^{[0]}} H^{n}(\Gr_F^{n-2}C^\bullet(\P_\tau,\partial\P_\tau)) & \bigoplus_{\tau\in\P^{[1]}} H^{n}(\Gr_F^{n-2}(C^\bullet(\P_\tau,\partial\P_\tau)) \ar[l] & \ar[l]\dots
}
}
\end{gathered}
\end{equation}
where $C^\bullet(\P_\tau,\partial\P_\tau)$ refers to the complex defined in \eqref{def-coho-cone} for the sheaf $\iota_*\bigwedge^p\Lambda$ and the cover $\{W_\sigma|\sigma\in\P^{[n]}_\tau\}$ and $F$ is the filtration defined before Lemma~\ref{lemma-gradedcoho}. The vertical maps are the $d_1$-differentials introduced before Proposition~\ref{prop-map-ho-coho}. The horizontal maps are the obvious ones using Lemma~\ref{lemma-gradedcoho}.
Let $\check T^\bullet$ denote the total complex of \eqref{eq-double-complex-cohomology}. We claim to have a commutative diagram of complexes (with $i$ running)
\begin{equation}
\label{eq-final-diagram-for-comp}
\begin{gathered}
\xymatrix@C=30pt
{ 
C_i(B,\iota_*\bigwedge^p\Lambda) \ar^{f}[d] & \ar[l] T^i\ar[d]\ar[r]  & \Hom(C^i(\{W_v\},\iota_*\bigwedge^p\check\Lambda),\ZZ)\ar[d]\\
H^{n-i}\Gr_F^{n-i}C^{\bullet}(\{W_\sigma\},\iota_*\bigwedge^p\Lambda) & \ar[l] \check T^i\ar[r] & \Hom(C^i(\{W_v\},\iota_*\bigwedge^{n-p}\Lambda),\ZZ)
}
\end{gathered}
\end{equation}
where the top row was given in the proof of Lemma~\ref{iso-via-clstars}.
The double complex \eqref{eq-double-complex-cohomology} is isomorphic to the one given in \eqref{eq-double-complex-homology} on the nose which
gives us the middle vertical isomorphism $T^\bullet\ra\check T^\bullet$. 
Furthermore, the rows of \eqref{eq-double-complex-cohomology} have homology concentrated in the first column. The resulting homology in the first column becomes the complex given in the bottom left of \eqref{eq-final-diagram-for-comp} which explains the bottom left horizontal map in \eqref{eq-final-diagram-for-comp}. In fact, we conclude the entire left square of \eqref{eq-final-diagram-for-comp} as the isomorphism $T^\bullet\ra\check T^\bullet$ induces the map $f$ given in Proposition~\ref{prop-map-ho-coho}.
The homology of the $i$th column of \eqref{eq-double-complex-cohomology} computes $\bigoplus_{\tau\in\P^{[i]}} H^j_c(W_\tau,\iota_*\bigwedge^p\Lambda)$ with $H^n_c(W_\tau,\iota_*\bigwedge^p\Lambda)$ sitting at the bottom. The commuting of the right square in \eqref{eq-final-diagram-for-comp} now follows from plugging the commutative diagram \eqref{first-show-this-commutes} into Lemma~\ref{iso-via-clstars} which makes the horizontal maps in the right square of \eqref{eq-final-diagram-for-comp}.
Taking (co)homology of \eqref{eq-final-diagram-for-comp} yields the commutative diagram of the assertion.
\end{proof}

\begin{corollary}
\label{pairing-is-cap-product}
The pairing $H_{p,q}\otimes H^{p,q}\ra\ZZ$ given in Theorem~\ref{pairingthm} is the composition of the cap product 
$H_q(B,\iota_*\bigwedge^p\Lambda)\otimes H^q(B,\iota_*\bigwedge^p\check\Lambda)\ra H_0(B,\iota_*\bigwedge^p\Lambda\otimes\iota_*\bigwedge^p\check\Lambda)$
with the map $\op{tr}$ given in Lemma~\ref{lemma-cap-zero}.
\end{corollary}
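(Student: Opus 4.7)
My plan is to show that the two pairings agree locally on each closed star and then glue. Fix a triangulation $\P$ as in Proposition~\ref{hypothesis-for-lemma}. Recall that the pairing of Theorem~\ref{pairingthm} is assembled by Lemma~\ref{iso-via-clstars} from the local maps
\[
\varphi_\tau\colon H_0(|\P_\tau|,\iota_*\bigwedge^p\Lambda)\lra \Hom\bigl(\Gamma(W_\tau,\iota_*\bigwedge^p\check\Lambda),\ZZ\bigr),
\]
which, by the construction in Lemma~\ref{lemma-sections} and Proposition~\ref{hypothesis-for-lemma}, are precisely the top horizontal maps of~\eqref{eq-key-map}. Lemma~\ref{lemma-cap-zero}, applied to the open sets $U=W_\tau$, already identifies these horizontal maps with the composition of cap product and $\op{tr}$. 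So locally the two pairings agree on the nose, and the corollary is a statement about assembling these local agreements consistently.

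To globalize I would represent both homology and cohomology classes at the chain--\v{C}ech level. After refining $\P$ if necessary, a class $[\beta]\in H_q(B,\iota_*\bigwedge^p\Lambda)$ is represented by a simplicial cycle $\beta=\sum_{\tau\in\P^{[q]}}a_\tau$ with $a_\tau\in\Gamma(\tau,\iota_*\bigwedge^p\Lambda)$; a class in $H^q(B,\iota_*\bigwedge^p\check\Lambda)$ is represented by a \v{C}ech cocycle $(b_\tau)_{\tau\in\P^{[q]}}$ for the cover by open vertex-stars, as in Example~\ref{ex-pair-local-sys}. The chain-level cap product formula of \cite[(49) on p.337]{Br97} (recalled in the proof of Lemma~\ref{lemma-cap-zero}) produces the $0$-chain $\sum_\tau a_\tau\otimes b_\tau$, and applying $\op{tr}$ termwise yields $\sum_\tau\langle a_\tau,b_\tau\rangle$. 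On the other hand, tracing $\beta$ through the double-complex zigzag of Lemma~\ref{iso-via-clstars} produces an element of $H_q(\P,B)$ represented by $\tau\mapsto\varphi_\tau([a_\tau])$, whose evaluation against $(b_\tau)$ is $\sum_\tau\varphi_\tau([a_\tau])(b_\tau)$. By Lemma~\ref{lemma-cap-zero} applied on each $W_\tau$ separately, the two sums agree term by term.

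The main technical obstacle I anticipate is verifying that the zigzag of Lemma~\ref{iso-via-clstars}, applied to the simplicial representative $\beta$ above, truly outputs the cocycle $\tau\mapsto\varphi_\tau([a_\tau])$ with the correct signs. This is a diagram chase through the double complex~\eqref{eq-double-complex-homology}: one picks the obvious lift of $\beta$ to the $\sigma=\tau$ summands of the bottom row and checks that the splitting used to invert $\bar\varphi_2$ produces the claimed formula, with signs matching those in the \v{C}ech differential and in the cap product. Once this compatibility is in hand, naturality of both $\cap$ and $\op{tr}$ under the open inclusions $\tau\subset W_\tau\subset B$ automatically promotes the pointwise agreement of Lemma~\ref{lemma-cap-zero} to the global identity, finishing the corollary.
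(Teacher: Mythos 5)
Your plan is a genuinely different route from the paper's.  The paper deduces the corollary by chaining together three already-established facts: Theorem~\ref{thm-pairings-agree} shows that the pairing of Theorem~\ref{pairingthm} factors as the cup product with $\iota_*\bigwedge^{n-p}\Lambda$ via \eqref{eq-sheaf-pairing-perfect} under the Poincar\'e--Lefschetz isomorphism constructed in the appendix; Corollary~\ref{PLmap-is-the-same} identifies that isomorphism with Bredon's; and \cite[Theorem 10.1]{Br97} says cup product transported by Poincar\'e--Lefschetz is cap product.  You instead propose a direct chain-level comparison, applying Lemma~\ref{lemma-cap-zero} on each open star $W_\tau$ and gluing through the double complex of Lemma~\ref{iso-via-clstars}.

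There is a real gap, and it is the one you name but do not fill.  The claim that the zigzag $\bar\varphi_1\circ\bar\varphi_2^{-1}$ applied to a simplicial representative $\beta=\sum_\tau a_\tau$ outputs the cocycle $\tau\mapsto\varphi_\tau([a_\tau])$ is not an ``obvious lift'': $\bar\varphi_2$ is only a quasi-isomorphism, its inverse is not realized by a chain map, the $0$-chain class $[a_\tau]\in H_0(\P_\tau,A)$ requires restricting $a_\tau\in A_\tau$ to a chosen vertex (and an argument that the resulting class is independent of the vertex), and the sign bookkeeping has to line up with both the \v{C}ech differential and the cap product orientation conventions.  This is not a cosmetic check — it is essentially the content the paper spends on Theorem~\ref{thm-pairings-agree} via the dual double complex \eqref{eq-double-complex-cohomology} together with Proposition~\ref{prop-map-ho-coho}.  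In addition, your final step identifies $\sum_\tau\langle a_\tau,b_\tau\rangle$ as the cap product of $[\beta]$ with a \v{C}ech class, but $\iota_*\bigwedge^p\check\Lambda$ is not a local system and the $b_\tau$ are not restrictions of a global section, so this requires exactly the cup/cap compatibility under Poincar\'e--Lefschetz that the paper gets from \cite[Theorem 10.1]{Br97} once Corollary~\ref{PLmap-is-the-same} is in place.  Carrying your program through to the end is possible, but would amount to re-deriving Theorem~\ref{thm-pairings-agree} and the appendix machinery; the paper's proof is short precisely because it cites those results rather than re-proving them.
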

\begin{proof}
By Corollary~\ref{PLmap-is-the-same}, the left vertical map in Theorem~\ref{thm-pairings-agree} agrees with the Poincar\'e--Lefschetz map in \cite{Br97}.
The bottom horizontal map in Theorem~\ref{pairingthm} is the cup product by the statement of the theorem. 
Since cup and cap product are compatible under Poincar\'e--Lefschetz by \cite[Theorem 10.1]{Br97}, in view of Lemma~\ref{lemma-cap-zero}, we conclude that the top horizontal map in Theorem~\ref{thm-pairings-agree} is given by the cap product.
\end{proof}

\begin{proof}[Proof of Theorem~\ref{thm-intersect}] 
That (1) and (2) coincide is Theorem~\ref{thm-pairings-agree}.
We use Corollary~\ref{pairing-is-cap-product} and then \cite[V-\S11]{Br97} and localize the intersection product to a small neighborhood of each intersection point where we may assume by the hypothesis on $V$ and $W$ that $V$ and $W$ are manifolds.
That (1) and (2) agree with (3) now follows from the geometric interpretation of the intersection product as given in \cite[Theorem VI.11.9,\,p.\,372]{Br93} enhanced by permitting coefficients in locally constant sheaves as in \cite[V-\S11]{Br97}.
\end{proof}

\begin{proof}[Proof of Theorem~\ref{thm-zharkov}] 
Let $x\in V\cap W$ be an intersection point and $U$ a small neighborhood of $x$ where $V$ and $W$ are manifolds and we can trivialize $\Lambda|_U$ and $\check\Lambda|_U$. Since $\check\Lambda|_U$ frames $\shT^*_U$, we don't need to distinguish between the stalk and global sections and may use the notation $\Lambda,\check\Lambda$ to refer to both. Let $\xi_V\in\bigwedge^p\Lambda$ and $\xi_W\in \bigwedge^{n-p}\Lambda$ denote the section carried by $V$ and $W$ respectively.
If $\xi_V$ decomposes into a sum of $r$ simple primitive forms, then we replace $V$ by $r$ copies of $V$, each carrying one simple primitive summand. The same holds for $\xi_W$. We therefore assume from now on that $\xi_V$ and $\xi_W$ are simple and primitive, say 
$\xi_V=v_1\wedge...\wedge v_p$ where $v_i\in\Lambda$.

We have $\pi^{-1}(U)=T^*_{U}/\check\Lambda$ and there is a natural translation action of $T^*_{U}$ on the fibers of $\pi|_{\pi^{-1}(U)}$.
Let $\xi_V^\perp$ refer to the subgroup of $T^*_{U}$ given by $v_1^\perp\cap...\cap v_p^\perp$.
The construction of the ordinary homology cycles $\beta_V,\beta_W$ makes use of a section $S\colon U\ra X$. The cycle $\beta_V$ over $U$ is then given by 
$\beta_V:=\xi_V^\perp.S(V)$ where $\xi_V^\perp.S(V)$ refers to the orbit of $S(V)$ under the subgroup $\xi_V^\perp$. A similar construction yields $\beta_W$. 

The orientation of $\beta_V$ is obtained by giving $S(V)$ the orientation of $V$ and orienting $\xi_V^\perp$ so that contracting the dual $\check\Omega$ of $\Omega$ by $\xi_V$ gives the orientation form. The orientation for $\beta_W$ is given similarly.
The orbits of $\xi_V^\perp$ and $\xi_W^\perp$ intersect in $|\frac{\xi_V\wedge \xi_W}\Omega|$ many points and each comes with the same intersection multiplicity computed as follows.

The tangent space of $\beta_V$ at the intersection point $S(x)$ is $T_{\beta_V,S(x)}=T_{V,x}\oplus \xi_V^\perp$ and similarly $T_{\beta_W,S(x)}=T_{W,x}\oplus \xi_W^\perp$ is the tangent space at $\beta_W$.
Note that $\dim\xi_V^\perp=n-p$ and $\dim T_{W,x}={n-q}$. So the orientation of $T_{\beta_V,S(x)}\oplus T_{\beta_W,S(x)}$ is $(-1)^{(n-p)(n-q)}$ times the orientation of $T_{V,x}\oplus T_{W,x}\oplus \xi_V^\perp\oplus \xi_W^\perp$. The latter is $\eps(T_{V,x},T_{W,x})$ multiplied with the sign of 
$\frac{\iota_{\xi_V}\check\Omega\wedge\iota_{\xi_W}\check\Omega}{\check\Omega}$ which in turn is $(-1)^{(n-p)p}$ times the sign of $\iota_{\xi_V\wedge \xi_W}\check\Omega=\frac{\xi_V\wedge \xi_W}{\Omega}$.
Checking that $(-1)^{(n-p)(n-q)+(n-p)p} = (-1)^{(n-p)(q-1)}$ finishes the proof.
\end{proof}

\appendix

%
%

\section{Identification of simplicial and singular homology}
\label{general-constructible-sheaves}
We could not find the following results on constructible sheaves in
the literature, so we provide proofs here. 
Recall from
\cite[\S2.1]{Ha02} that a $\triang$-{\bf complex} is a CW-complex
where each closed cell comes with a distinguished surjection to it
from the (oriented) standard simplex with compatibility between
sub-cells and faces of the simplex. We denote the relative interior of a simplex $\tau$ by $\tau^\circ$.
For a $\triang$-complex $X$, we use the notation $X=\coprod_{\tau\in T} \tau^\circ$ where $T$ is a
set of simplices for each of which we have the {\bf characteristic
map} $j_\tau\colon \tau\ra X$  that restricts to a homeomorphism on $\tau^\circ$. Let $X=\coprod_{\tau\in
T}\tau^\circ$ be a $\triang$-complex. 
\begin{definition} 
\begin{enumerate}
\item A sheaf of Abelian groups $\shF$ on $X$
is $T$-{\bf constructible} if $\shF|_{\tau^\circ}$ is a constant
sheaf for each $\tau\in T$. 
\item A $T$-constructible sheaf $\shF$ is $T$-{\bf acyclic} if
$$H^i(\tau,j_\tau^{-1}\shF)=0$$
holds for each $\tau\in T$ and $i>0$.
\end{enumerate}
\label{def-constructible}
\end{definition}
We denote by $T^\bary$ the barycentric subdivision of $T$.
\begin{lemma} 
\label{baryacyclic}
Let $\shF$ be a $T$-constructible sheaf then $\shF$ is $T^\bary$-acyclic.
\end{lemma}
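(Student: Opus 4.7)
The plan is to fix an arbitrary simplex $\sigma\in T^\bary$, which by construction corresponds to a chain $\tau_0\subsetneq\tau_1\subsetneq\cdots\subsetneq\tau_k$ in $T$ and has the barycenters $b_{\tau_0},\ldots,b_{\tau_k}$ as vertices. I filter $\sigma$ by the faces
$$\{b_{\tau_0}\}=F_0\subset F_1\subset\cdots\subset F_k=\sigma,\qquad F_j:=[b_{\tau_0},\ldots,b_{\tau_j}],$$
and prove by induction on $j$ that $H^i(F_j,j_\sigma^{-1}\shF|_{F_j})=0$ for $i>0$. The case $j=0$ is trivial since $F_0$ is a point, and $j=k$ yields the lemma.

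The combinatorial heart of the argument is a stratum identification on $\sigma$: for $p=\sum_{i=0}^k s_i b_{\tau_i}$ with $s_i\ge 0$, $\sum s_i=1$, and $m=\max\{i:s_i>0\}$, one has $j_\sigma(p)\in\tau_m^\circ$. Here $p\in\tau_m$ because $\tau_i\subseteq\tau_m$ for $i\le m$, and the term $s_m b_{\tau_m}$ contributes a strictly positive amount to every barycentric coordinate of $p$ at the vertices of $\tau_m$, while all vertices of $\tau_m$ outside $\tau_m$ itself receive coordinate zero. It follows that the open subset $U_{j+1}:=F_{j+1}\setminus F_j$ (the locus where $s_{j+1}>0$) maps into the single stratum $\tau_{j+1}^\circ$, and hence $j_\sigma^{-1}\shF|_{U_{j+1}}$ is a constant sheaf with some stalk $A_{j+1}$ by $T$-constructibility.

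For the inductive step, apply the open/closed distinguished triangle coming from the decomposition $F_{j+1}=U_{j+1}\sqcup F_j$. Writing $\jmath\colon U_{j+1}\hookrightarrow F_{j+1}$ and $\imath\colon F_j\hookrightarrow F_{j+1}$ for the open and closed inclusions, the triangle $\jmath_!\jmath^{-1}\shF\to\shF\to\imath_*\imath^{-1}\shF$ yields
$$\cdots\to H^i_c(U_{j+1},A_{j+1})\to H^i(F_{j+1},\shF|_{F_{j+1}})\to H^i(F_j,\shF|_{F_j})\to\cdots.$$
The left-hand term vanishes in every degree because the one-point compactification of $U_{j+1}$ is the quotient $F_{j+1}/F_j$ obtained by collapsing a face of a simplex, which is contractible (both $F_{j+1}$ and $F_j$ are contractible and the inclusion is a cofibration). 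Combined with the inductive hypothesis on $F_j$, this forces $H^i(F_{j+1},\shF|_{F_{j+1}})=0$ for $i>0$.

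The step I expect to be the main obstacle is the combinatorial stratum identification via the max-index rule, because it is precisely the place where passing to $T^\bary$ (rather than working with $T$ itself) enters essentially: without refining the triangulation one would not obtain a filtration whose open layers each lie inside a single $T$-stratum, and the pulled-back sheaf would not be constant on the pieces $U_{j+1}$. Once this layer-by-layer constancy is secured, the remainder is a routine application of the open/closed long exact sequence together with the contractibility of the pair-quotient $F_{j+1}/F_j$.
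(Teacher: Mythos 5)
Your proof is correct, and it takes a genuinely different route from the paper's. The paper's proof is a retraction-to-the-stalk (conical) argument: writing $\sigma\in T^\bary$ as the simplex on the barycenters of a chain $\tau_0\subsetneq\cdots\subsetneq\tau_k$, it observes that scaling $\sigma$ towards the barycenter $b_{\tau_0}$ of the \emph{smallest} simplex in the chain preserves the pulled-back $T$-stratification, so that $\Gamma(U,j_\sigma^{-1}\shF)=\Gamma(\lambda U,j_\sigma^{-1}\shF)$; from this it deduces $H^i(\sigma,j_\sigma^{-1}\shF)=(R^i\id_*\,j_\sigma^{-1}\shF)_{b_{\tau_0}}$, which vanishes for $i>0$ since $\id_*$ is exact. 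You instead filter $\sigma$ by the faces $F_j=[b_{\tau_0},\ldots,b_{\tau_j}]$, show via the max-index rule that each open layer $U_{j+1}=F_{j+1}\setminus F_j$ lands in a single $T$-stratum so that $j_\sigma^{-1}\shF|_{U_{j+1}}$ is constant, and then run the open/closed long exact sequence together with $H^\bullet_c(U_{j+1},A)=\tilde H^\bullet(F_{j+1}/F_j,A)=0$. Both proofs rest on the same combinatorial observation about barycentric coordinates (which is also exactly what makes the paper's scaling stratum-preserving), and both exploit that the filtration starts at $b_{\tau_0}$ rather than $b_{\tau_k}$ — building from the top end would not produce layers lying in single strata. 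What your version buys is that it is more elementary and self-contained: it replaces the limit identification $H^i(\sigma,\shF)=(R^i\id_*\shF)_{b_{\tau_0}}$ (a continuity/cofinality step which the paper outsources to a cited reference) by a finite induction using only the standard triangle $\jmath_!\jmath^{-1}\to\id\to\imath_*\imath^{-1}$ and the contractibility of a simplex modulo one of its facets. The paper's version is shorter and more uniform, handling the whole chain at once by a single scaling.
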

\begin{proof}
This goes by a retraction-to-the-stalk argument as in \cite[Proof of Lemma~5.5]{logmirror1}, here are the details:
let $\sigma\in T^\bary$. We write $\shF$ for $j_\sigma^{-1}\shF$.
Let $v_{\tau_0},...,v_{\tau_r}$ be the vertices of $\sigma$ so that $v_{\tau_i}$ is the barycenter of $\tau_i\in T$ and $\tau_0\subset\tau_1\subset...\subset\tau_r$.
Make $v_{\tau_0}$ be the origin then for every open neighborhood $U$ of $v_0$ in $\sigma$ and every $0<\lambda<1$ we have $\Gamma(U,\shF)=\Gamma(\lambda U,\shF)$ since $\shF$ is T-constructible.
This implies that $H^i(\sigma,\shF)=(R^i\id_*\shF)_{v_0}$ but for every sheaf $\shF$ and $i>0$ holds $R^i\id_*\shF=0$, so we are done.
\end{proof}

Let $S\subseteq T$ so that $A=\coprod_{\tau\in S}\tau^\circ$ is a (closed) subcomplex of $X$. Let $\shF$ be a $T$-constructible sheaf on $X$.
\begin{definition}
\label{def-homology}
\begin{enumerate}
\item
We denote by $H_i^T(X,A;\shF)$ the relative simplicial homology with
coefficients $\shF$, i.e.~it is computed by the differential graded $\ZZ$-module
\[
\bigoplus_{i\ge 0} C^T_i(X,A;\shF)\qquad \hbox{ where }\qquad
C^T_i(X,A;\shF):=\bigoplus_{{\tau\in T\setminus
S}\atop{\dim\tau=i}}\Gamma(\tau,j_\tau^{-1}\shF)
\]
with the usual differential $\partial\colon C^T_i(X,A;\shF)\ra
C^T_{i-1}(X,A;\shF)$ whose restriction/projection to
$\Gamma(\sigma,j_\sigma^{-1}\shF)\ra \Gamma(\tau,j_\sigma^{-1}\shF)$ for
$\tau\subset\sigma$ a facet inclusion is given by the restriction
map multiplied by
\[
\eps_{\tau\subset\sigma}
=\left\{\begin{array}{ll}
+1, & n_\tau\wedge\ori_{\tau}=+\ori_\sigma \\
-1, & n_\tau\wedge\ori_{\tau}=-\ori_\sigma
\end{array}\right.
\]
where $\ori_{\tau},\ori_{\sigma}$ denote the orientation of
$\tau,\sigma$ respectively and $n_\tau$ is the outward normal of
$\sigma$ along $\tau$.

\item
On the other hand, one defines $H_i(X,A;\shF)$, the singular
homology with coefficients $\shF$, in the usual way (see for
example \cite[VI-\S12]{Br97}) where chains $C_i(X,A;\shF)$ are finite formal
sums over singular $i$-simplices in $X$ modulo singular
$i$-simplices in $A$.
\end{enumerate}
We denote by $\overrightarrow{C}_i(X,A;\shF)$ the direct limit of
$C_i(X,A;\shF)$ under the barycentric subdivision operator on
singular chains, see \cite[V-\S1.3]{Br97}. 
When $X,A,\shF$ are unambiguous, 
we write $C^T_i$ or $C^T_i(\shF)$ for $C^T_i(X,A;\shF)$; we write $C_i$ or $C_i(\shF)$ for $C_i(X,A;\shF)$ and we write $\overrightarrow{C}_i$ or $\overrightarrow{C}_i(\shF)$ for
$\overrightarrow{C}_i(X,A;\shF)$.
\end{definition}

Note that there is a natural map of complexes $C^T\ra C^{T^\bary}$ that maps the section of $\shF$ on a simplex $\tau\in T$ to its restrictions to all $\tau'\in T^\bary$ that have the property $\tau'\subset\tau$ and $\dim\tau=\dim\tau'$.

\begin{lemma} 
\label{reduce-to-barycentric}
Let $X=\coprod_{\tau\in T} \tau^\circ$ be a $\triang$-complex, $A$ a
subcomplex and $\shF$ be a $T$-acyclic sheaf on $X$. 
For every $i$, the restriction map $C^T\ra C^{T^\bary}$ induces an isomorphism
\[
H^{T}_i(X,A;\shF) \lra H^{T^\bary}_i(X,A;\shF).
\]
\end{lemma}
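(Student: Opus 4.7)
My plan is to reduce the statement, via a skeletal filtration, to the case of a single closed simplex modulo its boundary, where a direct computation will finish the proof using $T$-acyclicity.

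First, I would filter both complexes by the skeleta of $T$. Setting $X_k := A \cup |T^{(\le k)}|$, the restriction map $C^T \to C^{T^\bary}$ preserves the filtration $C^T(X_k,A;\shF) \subseteq C^T$ and its analogue for $C^{T^\bary}$. For the source, the associated graded in filtration degree $k$ is precisely $\bigoplus_{\tau \in T^{[k]}\setminus S}\Gamma(\tau, j_\tau^{-1}\shF)$ concentrated in homological degree $k$. For the target, the associated graded is $\bigoplus_{\tau \in T^{[k]}\setminus S} C^{T^\bary}_*(\tau,\partial\tau;j_\tau^{-1}\shF)$. Applying the five-lemma inductively to the long exact sequences of the triples $(X_k,X_{k-1},A)$ (or, equivalently, comparing the two spectral sequences), it suffices to show that for each single simplex $\tau \in T \setminus S$ of dimension $n$ the restriction
\[
C^T_*(\tau,\partial\tau;j_\tau^{-1}\shF) \to C^{T^\bary}_*(\tau,\partial\tau;j_\tau^{-1}\shF)
\]
is a quasi-isomorphism. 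The left-hand side is $\Gamma(\tau,j_\tau^{-1}\shF)$ concentrated in degree $n$ by definition, so the task becomes the corresponding homological vanishing and top-degree computation on the right.

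I would then induct on $n = \dim\tau$. The base case $n=0$ is trivial, since $\tau^\bary=\tau$ is a single point. For the inductive step, the long exact sequence of the pair $(\tau,\partial\tau)$ for $T^\bary$-simplicial homology, together with the inductive hypothesis applied to each face of $\tau$ (assembled via its natural skeletal filtration on $\partial\tau$), reduces the problem to proving the absolute statement
\[
H^{T^\bary}_i(\tau; j_\tau^{-1}\shF) = \begin{cases}\Gamma(\tau,j_\tau^{-1}\shF), & i=0,\\ 0, & i>0.\end{cases}
\]
This is the heart of the argument and the place where $T$-acyclicity is essential. My strategy is to build an explicit chain contraction by coning off to the barycenter $v_\tau \in \tau^\bary$: every simplex of $\tau^\bary$ is naturally a face of a simplex with last vertex $v_\tau$, and the usual cone construction provides a chain homotopy between the identity and the augmentation onto $\Gamma(\tau,j_\tau^{-1}\shF)$ at $v_\tau$. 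At the sheaf level, constructing the cone operator requires extending a section carried on a barycentric simplex $\sigma = (v_{\tau_0}<\cdots<v_{\tau_r})$ (with $\tau_0 \subsetneq \cdots \subsetneq \tau_r \subseteq \tau$) to the larger simplex obtained by adjoining $v_\tau$; this extension is controlled precisely by the vanishing $H^{>0}(\tau, j_\tau^{-1}\shF) = 0$, i.e.\ $T$-acyclicity.

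The main obstacle will be making this chain contraction precise when $\shF|_\tau$ is not constant but only constructible with respect to the face structure of $\tau$, so that sections on a small face do not automatically extend to $\tau$. I expect to handle this by filtering $\tau^\bary$ according to the flag data $\tau_0 \subsetneq \cdots \subsetneq \tau_r$ of its simplices and carrying out the cone construction stratum by stratum: on the stratum indexed by a chain ending at $\tau_r$, one only needs to extend sections across faces contained in $\tau_r$, which is governed by $T$-acyclicity of $\shF$ restricted to $\tau_r$. Assembling these stratified contractions coherently, using the combinatorial compatibility of the cone operation with the order on vertices, will yield the required chain homotopy and hence the quasi-isomorphism.
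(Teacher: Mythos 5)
Your overall plan — filter by the $T$-skeleta and reduce to a single $n$-simplex $\tau$ relative to its boundary, where the $T$-side is $\Gamma(\tau,j_\tau^{-1}\shF)$ concentrated in degree $n$ — is exactly the filtration $F_jC^{T^\bary}_\bullet$ by carrier dimension that the paper uses. However, the "absolute statement" you reduce to is false, and this is a genuine gap. Note first that the cone homotopy $h$ does not need $T$-acyclicity at all: for a barycentric cell $\sigma$ with flag $\tau_0\subsetneq\cdots\subsetneq\tau_r$, the conical-retraction argument in the proof of Lemma~\ref{baryacyclic} shows $\Gamma(\sigma,j_\sigma^{-1}\shF)=\shF_{v_{\tau_0}}$, the stalk at the barycenter of the \emph{smallest} flag member, and this group is unchanged when one appends $v_\tau$ to the flag; so the lift $\tilde a$ exists uniquely and the extension issue in your last paragraph is a non-issue. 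What the homotopy then proves is that $H_i^{T^\bary}(\tau;\shF)=0$ for $i>0$ and that $H_0^{T^\bary}(\tau;\shF)\cong\shF_{v_\tau}$, the \emph{generic stalk} on the open cell $\tau^\circ$, and this is \emph{not} $\Gamma(\tau,j_\tau^{-1}\shF)$ in general. Concretely, take $\shF$ the skyscraper at a vertex $v_0$ of $\tau$ with stalk $M\neq 0$; this $\shF$ is $T$-acyclic and $\Gamma(\tau,\shF)=M$, but in $C_\bullet^{T^\bary}(\tau;\shF)$ the boundary of the $1$-cell with flag $(\{v_0\},\tau)$ is $-(\{v_0\})\otimes a$ (the term at $v_\tau$ dies because $\shF_{v_\tau}=0$), so $\partial_1$ is onto $C_0=M$ and $H_0^{T^\bary}(\tau;\shF)=0\neq M$.

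Thus the reduction misfires: the cone homotopy computes the absolute barycentric homology of a simplex without ever seeing $T$-acyclicity, and gives an answer ($\shF|_{\tau^\circ}$ in degree $0$) that does not match $\Gamma(\tau,\shF)$. The hypothesis of $T$-acyclicity is genuinely needed, but at a different spot: one must show directly that the \emph{relative} graded piece $C_\bullet^{T^\bary}(\tau,\partial\tau;\shF)$ has homology $\Gamma(\tau,\shF)$ concentrated in degree $n=\dim\tau$. The paper does this by recognizing the $\Gr^F$-piece attached to each $\hat\tau$ as a complex that computes $H^\bullet(\hat\tau, j_{\hat\tau}^{-1}\shF)$, which $T$-acyclicity collapses to $\Gamma(\hat\tau,\shF)$. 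Your cone operator cannot be applied to the relative complex (every cell there already contains $v_\tau$, so coning is degenerate), so the passage to the absolute case cannot be avoided this way; and even setting aside the wrong computation of $H_0^{T^\bary}(\tau)$, you would independently need to establish $H_\bullet^T(\tau;\shF)$ (which for the same skyscraper is also $0$ in degree $0$, not $\Gamma(\tau,\shF)$, and for $j_!M$ with $j:\tau^\circ\hra\tau$ fails to match $H_\bullet^{T^\bary}$ precisely because $T$-acyclicity fails), which your sketch does not address.
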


\begin{proof}
For $\tau\in T^\bary$, let $\hat\tau$ denote the smallest simplex in
$T$ containing $\tau$. Define
$$F_jC_i^{T^\bary}= \bigoplus_{{\tau\in T^\bary\setminus S^\bary}\atop{\dim\hat\tau\le j},\dim\tau=i} \Gamma(\tau,j_\tau^{-1}\shF).$$
We have that 
$\{0\}\subset F_0C_\bullet^{T^\bary}\subset F_1C_\bullet^{T^\bary} \subset ...$
defines an increasing filtration of $C_\bullet^{T^\bary}$ by sub-complexes. 
Set $\Gr_i^F C_\bullet^{T^\bary} = \frac{F_iC_\bullet^{T^\bary}}{F_{i-1}C_\bullet^{T^\bary}}$.
Note that 
$$\Gr^F_jC_i^{T^\bary}= \bigoplus_{{\tau\in T^\bary\setminus S^\bary}\atop{\dim\hat\tau=j},\dim\tau=i} \Gamma(\tau,j_\tau^{-1}\shF)$$
and $\Gr_i^F C_\bullet^{T^\bary}=0$ if $i>j$. 
By Lemma~\ref{baryacyclic}, $K^\bullet:=\Gr_i^F C_{i-\bullet}^{T^\bary}$ computes the cohomology of the pull-back of $\shF$ to $\coprod_{\hat\tau\in T\setminus S,\dim \hat\tau=i} \hat\tau$.
Since $\shF$ is $T$-acyclic, we conclude 
$$
H_k^\partial\Gr_i^F C_\bullet^{T^\bary} 
= \left\{\begin{array}{ll}
C_i^T=\displaystyle\oplus_{\hat\tau\in T\setminus S,\dim \hat\tau=i} \Gamma(\hat\tau,j_{\hat\tau}^{-1}\shF) & i=k\\
0,& i\neq k.
\end{array}\right.
$$
The spectral sequence of the filtered complex $(C_i^{T^\bary}, F)$ reads
$$E^{p,q}_1=H_{-p-q}\Gr^F_{-p}C_\bullet^{T^\bary} \Rightarrow H_{-p-q}C_\bullet^{T^\bary}.$$
By the vanishing that we just proved, we find that it is concentrated at $q=0$ and thus this spectral sequence degenerates at $E_2$ and the differential $d_1$ is just the usual differential $\partial$ on $C_\bullet^T$.
This means we have an isomorphism
$$ H_{-p} C^T_\bullet = \Gr^F_{-p}H_{-p}C^{T^\bary}_\bullet.$$
The filtration $F$ is concentrated in one degree on the cohomology group of $C^{T^\bary}_\bullet$, so we have
$\Gr^F_{-p}H_{-p}C^{T^\bary}_\bullet= H_{-p}C^{T^\bary}_\bullet$ which gives the claim.
\end{proof}

Let $j_k$ denote the inclusion of the complement of the
$(k-1)$-skeleton in $X$, i.e.~$$j_k\colon \left(\coprod_{{\tau\in
T}\atop{\dim\tau\ge k}}\tau^\circ \right)\hra X.$$ Consider the
decreasing filtration of $T$-constructible sheaves
\[
\shF=\shF^0\supset\shF^1\supset\shF^2\supset\ldots
\]
of $\shF$ defined by $\shF^k={(j_k)_!}{j_k^{-1}}\shF$. Let $i_k$ denote
the inclusion of the $k$-skeleton in $X$ and $i_{\tau^\circ}$ denote
the inclusion of $\tau^\circ$ in the $k$-skeleton. We have
\begin{equation} \label{eq-Gr-decompose}
\Gr^k_\shF = \shF^k/\shF^{k+1}=\bigoplus_{{\tau\in
T}\atop{\dim\tau=k}} (i_k)_*(i_{\tau^\circ})_!(\shF|_{\tau^\circ}).
\end{equation}

\begin{lemma} 
\label{reducetoGr}
We have the following commutative diagram with exact rows
\begin{equation*}
\xymatrix@C=30pt
{
0 \ar[r]& C_\bullet^{T^\bary}(\shF^{k+1})   \ar[r]\ar[d]&
C_\bullet^{T^\bary}(\shF^{k})               \ar[r]\ar[d]&
C_\bullet^{T^\bary}(\Gr^k_\shF)         \ar[r]\ar[d]& 0 \\
0 \ar[r]& \overrightarrow{C}_\bullet(\shF^{k+1}) \ar[r]&      
\overrightarrow{C}_\bullet(\shF^{k}) \ar[r]&      
\overrightarrow{C}_\bullet(\Gr^k_\shF) \ar[r]&       0. \\
}
\end{equation*}
A similar statement holds if we replace $0\ra\shF^{k+1}\ra \shF^{k}\ra\Gr^k_\shF\ra 0$ by any other short exact sequence of $T$-constructible sheaves.
\end{lemma}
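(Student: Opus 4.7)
The plan is to verify exactness of each of the two rows separately and then to check that the two squares commute.

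For the top row, the chain groups $C_i^{T^\bary}(-)$ are direct sums over $\tau\in T^\bary\setminus S^\bary$ of dimension $i$ of $\Gamma(\tau,j_\tau^{-1}(-))$, so it suffices to show termwise that $\Gamma(\tau,j_\tau^{-1}(-))$ is exact on the short exact sequence $0\to\shF^{k+1}\to\shF^k\to\Gr^k_\shF\to 0$. An inspection of stalks using the decomposition \eqref{eq-Gr-decompose} shows that each of $\shF^{k+1}$, $\shF^k$, $\Gr^k_\shF$ is $T$-constructible (on each open stratum $\sigma^\circ$ for $\sigma\in T$ the stalks are constant, being $\shF_{\sigma^\circ}$ or $0$ depending on whether $\dim\sigma$ meets the relevant threshold). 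By Lemma~\ref{baryacyclic} all three sheaves are therefore $T^\bary$-acyclic, which means $H^1(\tau,j_\tau^{-1}\shF^{k+1})=0$. The long exact sequence of sheaf cohomology on $\tau$ then collapses to the desired short exact sequence on global sections. The same reasoning applies verbatim to any short exact sequence of $T$-constructible sheaves, giving the claimed ``similar statement''.

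For the bottom row, I would invoke Bredon's construction of singular sheaf chains \cite[VI-\S12]{Br97}, in which $\overrightarrow{C}_\bullet$ is built as a direct limit under barycentric subdivision of chain complexes whose contributions depend functorially on the coefficient sheaf. A short exact sequence of sheaves is exact on stalks; taking germs of sections at appropriately small singular simplices (as forced by the direct limit) preserves this exactness, and direct limits of short exact sequences of abelian groups are short exact. Hence the bottom row is a short exact sequence of chain complexes.

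Finally, for the vertical maps: the characteristic map $j_\tau\colon\triang^{\dim\tau}\to X$ exhibits every simplex $\tau\in T^\bary$ as a singular simplex, and a section $s\in\Gamma(\tau,j_\tau^{-1}\shF)$ tautologically provides a coefficient for this singular simplex in $\overrightarrow{C}_{\dim\tau}(\shF)$. This assignment is natural in $\shF$, so both squares commute automatically. The main obstacle to watch is the bottom row: one needs that Bredon's construction really does yield an exact functor from sheaves to chain complexes, which is essentially the raison d'\^etre of passing to the direct limit $\overrightarrow{C}$ — without this passage, large singular simplices could meet many strata on which $\shF$ is not locally constant, and $\Gamma$ on the pulled-back sheaves would fail to be exact.
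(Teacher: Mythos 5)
Your treatment of the top row and of commutativity is essentially the paper's: exactness of $C_\bullet^{T^\bary}$ on the short exact sequence reduces to $H^1(\tau,j_\tau^{-1}\shF^{k+1})=0$ for $\tau\in T^\bary$, which follows from Lemma~\ref{baryacyclic} since $\shF^{k+1}$ is $T$-constructible; and the vertical maps are the tautological ones, so the squares commute. All fine.

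The bottom row, however, has a genuine gap. You write that ``direct limits of short exact sequences of abelian groups are short exact,'' and conclude that the bottom row is exact. But this is not a direct limit of short exact sequences: at each fixed subdivision level, $\Gamma(\tau,s^{-1}(-))$ is only left exact, so the corresponding sequence of chain groups is merely left exact, not short exact. You therefore cannot pass the surjectivity through the colimit; surjectivity has to be \emph{proved} for the colimit, and that is exactly the content that needs an argument. Your closing remark correctly identifies this as the raison d'\^etre of $\overrightarrow{C}_\bullet$, but identifying the issue is not the same as resolving it. The paper resolves it concretely: given a singular simplex $s\colon\tau\to X$ and a section $g\in\Gamma(\tau,s^{-1}\Gr^k_\shF)$, the stalkwise surjectivity of $\shF^k\to\Gr^k_\shF$ gives local lifts $\hat g_\alpha$ on an open cover $\{U_\alpha\}$ of $\tau$; compactness of $\tau$ lets one take this cover finite; and after finitely many iterated barycentric subdivisions every sub-simplex $\tau'$ lies in some $U_\alpha$, so $g|_{\tau'}$ lifts to $\hat g_\alpha|_{\tau'}$. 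Since elements of $\overrightarrow{C}_\bullet$ are represented after sufficient subdivision, this proves surjectivity in the colimit. That explicit subdivision-plus-compactness step is what is missing from your proof, and the ``germs of sections'' phrasing does not substitute for it: one is working with actual sections on closed simplices, not stalks, and the argument must produce a finite subdivision that works.
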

\begin{proof}
The vertical maps are the universal maps, so commutativity is
straightforward. The left-exactness of the global section functor
leaves us with showing the exactness of the rows at the rightmost
non-trivial terms.  The first row is exact because for every $\tau\in T^\bary$ we have
$H^1(\tau,j_\tau^{-1}\shF^{k+1})=0$ by Lemma~\ref{baryacyclic}.
We show the surjectivity of 
$\overrightarrow{C}_\bullet(\shF^{k}) \ra
\overrightarrow{C}_\bullet(\Gr^k_\shF)$. Let $s\colon \tau\ra X$ be a
singular simplex and $g\in \Gamma(\tau,s^{-1}(\shF_k/\shF_{k+1}))$.
By the exactness of $s^{-1}$ and the surjectivity of
$\shF_k\ra\shF_k/\shF_{k+1}$, we find an open cover $\{U_\alpha\}$
of $\tau$ such that  $g|_{U_\alpha}$ lifts to
$\hat{g}_\alpha\in\Gamma(U_\alpha,\shF_k)$. By the compactness of
$\tau$, we may assume the cover to be finite. After finitely many
iterated barycentric subdivisions of $\tau$, we may assume each
simplex of the subdivision to be contained in a $U_\alpha$ for some
$\alpha$. Let $\tau'$ be such a simplex contained in $U_\alpha$,
then $g|_{\tau'}$ lifts to $\hat{g}_\alpha|_{\tau'}$ and we are done
since it suffices to show surjectivity after iterated barycentric
subdivision.
\end{proof}

\begin{theorem} 
\label{simplicial=singular}
Let $X=\coprod_{\tau\in T} \tau^\circ$ be a $\triang$-complex, $A$ a
subcomplex and $\shF$ be a $T$-acyclic sheaf on $X$.  For any
$i$, the natural map 
\[
H_i^T(X,A;\shF)\lra H_i(X,A;\shF)
\] is an isomorphism.
\end{theorem}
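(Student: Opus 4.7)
The plan is to reduce to a local base case via the skeletal filtration and induct using the 5-lemma. First, Lemma~\ref{reduce-to-barycentric} yields $H_i^T(X,A;\shF) \cong H_i^{T^\bary}(X,A;\shF)$, while the classical subdivision operator provides a quasi-isomorphism $C_\bullet(\shF) \to \overrightarrow{C}_\bullet(\shF)$ on singular chains \cite[V-\S1]{Br97}. So it suffices to show that the natural map $C_\bullet^{T^\bary}(\shF) \to \overrightarrow{C}_\bullet(\shF)$ is a quasi-isomorphism, noting that $\shF$ is automatically $T^\bary$-acyclic by Lemma~\ref{baryacyclic}.

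Next, consider the skeletal filtration $\shF = \shF^0 \supset \shF^1 \supset \ldots \supset \shF^{n+1} = 0$ where $n = \dim X$. For each $k$, Lemma~\ref{reducetoGr} supplies a morphism of short exact sequences of chain complexes linking the simplicial and singular complexes of $\shF^{k+1}$, $\shF^k$, and $\Gr^k_\shF$. Passing to the associated long exact sequences in homology, we argue by downward induction on $k$: the case $k = n+1$ is trivial since $\shF^{n+1} = 0$, and the 5-lemma reduces the inductive step for $\shF^k$ to the claim for $\Gr^k_\shF$. Using the additive decomposition \eqref{eq-Gr-decompose}, we further reduce to a single summand $\shG := (i_k)_*(i_{\tau^\circ})_!(\shF|_{\tau^\circ})$ for a fixed $k$-simplex $\tau \in T$; this sheaf equals $i_!G$ for the locally closed inclusion $i\colon \tau^\circ \hookrightarrow X$ and the constant sheaf $G := \shF|_{\tau^\circ}$.

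The base case is then to verify the isomorphism for $\shG$ directly. On the singular side, a singular simplex $\sigma\colon \Delta \to X$ contributes a non-zero section of $\sigma^{-1}\shG$ only if $\sigma(\Delta) \subset \tau^\circ$: for any other $\sigma$, the set $\sigma^{-1}(\tau^\circ)$ is a proper open subset of the connected $\Delta$, and the extension-by-zero defining $i_!G$ forces the only section with closed support to be zero. Hence $\overrightarrow{C}_\bullet(\shG)$ reduces to the singular chain complex of $\tau^\circ$ with constant $G$-coefficients, so since $\tau^\circ$ is contractible we find $H_0 = G$ when $\tau \not\subset A$ and everything vanishes otherwise. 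Analogously on the simplicial side, only $\sigma \in T^\bary$ with $\sigma \subset \tau^\circ$ can give non-zero sections; by the structure of barycentric subdivision the only such simplex is the vertex $\{b_\tau\}$, so $C_\bullet^{T^\bary}(\shG)$ is $G$ concentrated in degree $0$, with the comparison map the tautological identity on this common value.

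The main obstacle is precisely the base-case analysis of sections of $i_!G$ on simplices straddling $\partial\tau$: one must verify that the extension-by-zero condition forces such sections to vanish, both in the singular and the $T^\bary$-simplicial complexes, and identify the surviving subcomplexes with those computing the homology of the contractible cell $\tau^\circ$. Once this is established, the skeletal induction combined with the 5-lemma machinery assembles the full statement.
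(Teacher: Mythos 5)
Your overall approach matches the paper: reduce to $T^\bary$ by Lemma~\ref{reduce-to-barycentric}, filter by the skeletal filtration and invoke Lemma~\ref{reducetoGr} and the five lemma, then handle the graded pieces which decompose cell-by-cell as in \eqref{eq-Gr-decompose}. The identification $\shG = (i_k)_*(i_{\tau^\circ})_!(\shF|_{\tau^\circ}) = i_! G$ for the locally closed inclusion $i\colon \tau^\circ \hookrightarrow X$ is also fine, since $(i_k)_* = (i_k)_!$ for a closed inclusion.

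The gap is in the base case. You assert that for a singular simplex $\sigma\colon \Delta \to X$ not landing entirely inside $\tau^\circ$ the preimage $\sigma^{-1}(\tau^\circ)$ is a proper \emph{open} subset of the connected $\Delta$, so the closed-support condition kills every section of $\sigma^{-1}\shG$. But $\tau^\circ$ is only locally closed in $X$: it is open in the $k$-skeleton, yet closed relative to the higher-dimensional cells of which $\tau$ is a face. A singular simplex whose image meets $\tau^\circ$ from inside such a higher cell $\omega$ (for instance a short path starting on $\tau^\circ$ and immediately leaving into $\omega^\circ$) has $\sigma^{-1}(\tau^\circ)$ a non-empty \emph{closed} subset of $\Delta$, and the extension-by-zero sheaf then has plenty of non-zero global sections over $\Delta$; these simplices contribute and cannot be discarded. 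The analogous problem occurs on the simplicial side: while the only simplex of $T^\bary$ actually \emph{contained} in $\tau^\circ$ is the barycenter vertex $\{b_\tau\}$, there are many simplices of $T^\bary$ that contain $b_\tau$ and climb into higher cells of $T$; each of these meets $\tau^\circ$ in a closed face and carries a non-zero section of $\shG$. Consequently neither $\overrightarrow{C}_\bullet(\shG)$ nor $C^{T^\bary}_\bullet(\shG)$ reduces to the chain complex of the contractible cell $\tau^\circ$, and the homology is not $G$ concentrated in degree $0$. The correct computation (as in the paper's proof) identifies both sides with the relative local homology $H_\bullet(U, U\setminus\tau^\circ; M)$ for $U$ a small open neighborhood of $\tau^\circ$, and these are then matched across simplicial and singular via excision, retraction, and the classical comparison for constant coefficients on the pair $(\overline V,\overline V\setminus V)$. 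When $X$ is an $n$-manifold and $\tau$ is an interior $k$-cell this local homology is $M$ concentrated in degree $n-k$, not degree $0$, so the proposed identification with the absolute homology of $\tau^\circ$ is genuinely false, not merely a detour avoided.
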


\begin{proof} 
There is also a natural map $H_i^{T^\bary}(X,A;\shF)\ra
H_i(X,A;\shF)$ and by Lemma~\ref{reduce-to-barycentric}, it suffices
to prove that this is an isomorphism. Moreover, by the long exact
sequences of homology of a pair, it suffice to prove the absolute
case, so assume $A=\emptyset$.  By the long exact sequences in
homology associated to the rows in the diagram in
Lemma~\ref{reducetoGr}, the five-Lemma and induction over $k$ (noting $\Gr^m_\shF=\shF^m$ for $m=\max_{\tau\in T}\dim\tau$), it suffices to prove that
the universal map
\begin{equation}
\label{eq-Cs}
C_\bullet^{T^\bary}(\Gr^k_\shF)\lra \overrightarrow{C}_\bullet(\Gr^k_\shF)
\end{equation}
induces an isomorphism in homology for each $k$. 
It suffices to verify the isomorphism for each summand on the right of \eqref{eq-Gr-decompose} individually, so fix
some $k$-simplex $\sigma\in T$ and let $v_\sigma$ denote the
barycenter of $\sigma$. In order to prove \eqref{eq-Cs} is an isomorphism, we are going to show that
\begin{equation}
\label{eq-Cs-local}
C_\bullet^{T^\bary}((i_k)_*(i_{\sigma^\circ})_!(\shF|_{\sigma^\circ}))\lra
\overrightarrow{C}_\bullet((i_k)_*(i_{\sigma^\circ})_!(\shF|_{\sigma^\circ}))
\end{equation}
is one.
We define the open star of $v_\sigma$, a
contractible open neighborhood of the interior of $\sigma$ in $X$, by
\[
U=\coprod_{{\tau\in T^\bary}\atop{v_\sigma\in\tau}} \tau^\circ.
\]
Let $M$ be the stalk of $\shF$ at a point in $\sigma^\circ$. 
By abuse of notation, we also denote by $M$ the constant sheaf with stalk $M$ on $U$.

\noindent\underline{Claim:} The right-hand side of \eqref{eq-Cs-local} naturally identifies with
$\overrightarrow{C}_\bullet(U;M)/ \overrightarrow{C}_\bullet (U\setminus\sigma^\circ;M)$.

\noindent\emph{Proof of Claim.}
Set $\shG:=(i_k)_*(i_{\sigma^\circ})_!(\shF|_{\sigma^\circ})$.
An element of $\overrightarrow{C}_\bullet(\shG)$ is an equivalence class represented by a finite sum 
$\sum_{j\colon \tau\ra X} \Gamma(\tau,j^{-1}\shG)$
under the equivalence given by restricting to barycentric refinements. 
Note that by the presence of $(i_{\sigma^\circ})_!$ in the definition of $\shG$, a summand $j\colon \tau\ra X$ contributes zero if its image meets $\partial\sigma$. 
This fact together with the fact that $U$ is an open neighborhood of $\sigma^\circ$ in $X$ implies that for every $j\colon \tau\ra X$ there is a barycentric refinement so that each simplex in the refinement that contributes non-trivially maps into $U$. We thus showed that the natural map 
$\overrightarrow{C}_\bullet(U;\shG)\ra \overrightarrow{C}_\bullet(X;\shG)$ is an isomorphism.

The sheaf $\shG$ restricted to $U$ simplifies to $\shG|_U=\iota_*(M|_{\sigma^\circ})$ for
$\iota\colon \sigma^\circ\ra U$ the embedding. 
After suitable finite refinement via the equivalence relation, a singular simplex $j\colon \tau\ra U$ becomes a chain $\sum_i {j_i}\colon \tau_i\ra U$ such that 
for each $j_i$ either $j_i^{-1}(\sigma^\circ)=\emptyset$ and thus $\Gamma(\tau_{i},j_i^{-1}\shG)=0$
or $j_i^{-1}(\sigma^\circ)$ is contractible and thus $\Gamma(\tau_{i},j_i^{-1}\shG)=M$.
The hereby defined map $\overrightarrow{C}_\bullet(U;\shG)\ra \overrightarrow{C}_\bullet(U;M)/ \overrightarrow{C}_\bullet (U\setminus\sigma^\circ;M)$
therefore yields an isomorphism.
\qed

Now, $\overrightarrow{C}_\bullet(U;M)/ \overrightarrow{C}_\bullet (U\setminus\sigma^\circ;M)$ computes the
singular homology $H_\bullet(U,U\setminus\sigma^\circ;M)$.  Most
importantly, we have reduced the situation to singular homology with
constant coefficients, so we are allowed to apply standard
techniques like deformation equivalences as follows. The pair
$(U,U\setminus\sigma^\circ)$ retracts to $(V,V\setminus v_\sigma)$
where  $V=\coprod_{{\tau\in T^\bary,v_\sigma\in\tau}
\atop{\tau\cap\partial\sigma=\emptyset}} \tau^\circ$.  By excision,
we transition to the pair $(\overline{V},\overline{V} \setminus
v_\sigma)$ where $\overline{V}$ is the closure of $V$ in $X$.  On
the other hand, $\overline{V}\setminus v_\sigma$ retracts to
$\overline{V}\setminus V$ inside $\overline{V}$. Summarizing, we
obtain isomorphisms
\[
H_\bullet(U,U\setminus\sigma^\circ;{M}) = H_\bullet(V,V\setminus
v_\sigma;{M}) = H_\bullet(\overline{V},\overline{V}\setminus
v_\sigma;{M}) = H_\bullet(\overline{V},\overline{V}\setminus V;{M}).
\]
On the other hand, we identify the left-hand side of \eqref{eq-Cs-local} as
\[
\bigoplus_{k\ge 0}\bigoplus_{{\tau\in
T^\bary,\dim\tau=k}\atop{v_\sigma\in\tau,
\tau\cap\partial\sigma=\emptyset}} M
\]
which coincides with $C_\bullet^{T^\bary\cap \overline V}(\overline
V; M)/ C_\bullet^{T^\bary\cap  (\overline V\setminus V)}(\overline
V\setminus V; M)$  noting that $\overline V$ and $\overline
V\setminus V$ are $\triang$-sub-complexes of $X$. The theorem now follows
from the known isomorphism of simplicial and singular homology for
constant coefficients
\[
H^{T^\bary\cap \overline
V}_\bullet(\overline{V},\overline{V}\setminus
V;{M})=H_\bullet(\overline{V},\overline{V}\setminus V;{M}),
\]
see for example \cite[Theorem 2.27]{Ha02}.
\end{proof}

%
%

\section{Poincar\'e--Lefschetz isomorphism for constructible sheaves on topological manifolds}
\label{sec-hocoho-compare}
From now on, we use the notation $B,\P$ instead of $X,T$. The setup for the entire section is the following.
\begin{setup} 
\begin{enumerate}
\item
Let $B=\bigcup_{\tau\in\P} \tau^\circ$ be a $\triang$-complex with $\P$ \emph{finite} and $\Lambda$ a $\P$-constructible
sheaf on $B$. 
We assume there is no self-intersection of cells in $B$, i.e.~each $j_\tau$ is injective. 
We assume furthermore that the intersection of two cells in $\P$ is again a cell in $\P$.
We assume that for every $\tau\in\P$ either $\tau\cap\partial B=\emptyset$ or $\tau\cap\partial B$ is a face of $\tau$.
\item
We require that $B$ is an oriented topological manifold possibly with
non-empty boundary $\partial B$ which is also a topological manifold. We set $n=\dim B$.
\item
We denote by $\partial\P\subset\P$ the subcomplex given by the boundary of $B$ and define $\P^\max=\{\tau\in\P | \dim\tau=n\}$ and $\partial\P^\max=\{\tau\in\partial\P | \dim\tau=n-1\}$.
By the last assumption in (1), there is an injection $\partial\P^\max\ra\P^\max, \tau\mapsto\hat\tau$ with $\hat\tau$ such that $\hat\tau\cap\partial B=\tau$.
For $\tau\in\P^\max$, let $U_\tau$ denote a small open neighborhood of $\tau$ in $B$ and set $\fou=\{U_\tau|\tau\in \P^\max\}$.
Note that $U_{\tau_1}\cap\ldots\cap U_{\tau_k}$ is a small open
neighborhood of ${\tau_1}\cap\ldots\cap {\tau_k}$.
\item
We assume that $\Lambda$ is a $\P$-acyclic sheaf on $B$, i.e.~$H^i(\tau,\Lambda)=0$ for all $\tau\in\P$ and $i>0$ and in particular
\[
H^i(U_{\tau_1}\cap\ldots\cap U_{\tau_k},\Lambda)=0
\]
for $i>0$ and any subset $\{\tau_1,\ldots,\tau_k\}\subseteq \P^\max$.
\end{enumerate}
\label{setup-acyclic}
\end{setup}

\begin{example} 
If $\P'$ is a finite polyhedral complex that glues to an oriented
topological manifold $B$ with boundary and $\Lambda$ is a $\P'$-constructible
sheaf then the barycentric subdivision $\P$ of $\P'$ satisfies the
conditions of Setup~\ref{setup-acyclic} by Lemma~\ref{baryacyclic}. 
\end{example}

Fixing an orientation of each $\tau\in \P$, we can define the chain
complex $C^\P_\bullet(B,\partial B;\Lambda)$ as in
Definition~\ref{def-homology}, in particular 
\begin{equation}
\label{eq-def-homology}
C^\P_i(B,\partial B;\Lambda)  =
\bigoplus_{{\dim\tau=i}\atop{\tau\not\subset\partial
B}}\Gamma(\tau,\Lambda).
\end{equation}
To keep notation simple and since $H^\P_i(B,\partial B;\Lambda)=
H_i(B,\partial B;\Lambda)$ by Theorem~\ref{simplicial=singular},  we
denote $H^\P_i(B,\partial B;\Lambda)$ by $H_i(B,\partial B;\Lambda)$
and also $C^\P_i(B,\partial B;\Lambda)$ by $C_i(B,\partial
B;\Lambda)$, etc.
We have an exact sequence of chain complexes
\begin{equation}
\label{homology-ex-seq}
0\ra C_i(\partial B;\Lambda)\ra  C_i(B;\Lambda) \ra C_i(B,\partial B;\Lambda) \ra 0.
\end{equation}

The next goal is to produce a matching sequence of \v{C}ech co-chains.
We denote by $C^\bullet(B,\Lambda):=\check C^\bullet(\fou,\Lambda)$ the \v{C}ech
complex for $\Lambda$ with respect to $\fou$ and some total ordering of $\P^\max$. 
We have two induced covers for $\partial B$, namely $\fou^\partial:=\{U_{\hat\tau}\cap \partial B|\tau\in\partial\P^\max\}$ and
$\fou|_{\partial\P}:=\{U_{\tau}\cap\partial B|\tau\in\P\}\setminus\{\emptyset\}$ and $\fou^\partial\subseteq \fou|_{\partial\P}$.
The natural injection $\check C^\bullet(\fou^\partial,\Lambda)\ra \check C^\bullet(\fou|_{\partial\P},\Lambda)$ is a quasi-isomorphism.
We define $C^\bullet(\partial B,\Lambda):=\check C^\bullet(\fou|_{\partial\P},\Lambda)$ and
\begin{equation}
\label{def-coho-cone}
C^\bullet(B,\partial B;\Lambda):=\op{cone}\left( \check C^\bullet(\fou,\Lambda)\ra \check C^\bullet(\fou|_{\partial\P},\Lambda) \right),
\end{equation}
so we have a short exact sequence of complexes
\begin{equation}
\label{cohomology-ex-seq}
0\ra C^{j-1}(\partial B;\Lambda)\ra  C^j(B,\partial B;\Lambda) \ra C^j(B,\Lambda) \ra 0.
\end{equation}
and we next aim at producing a quasi-isomorphism from \eqref{homology-ex-seq} to \eqref{cohomology-ex-seq}.
Given
$I=\{\tau_0,\ldots,\tau_i\}\subset \P^\max$, we use the notation
$U_I=U_{\tau_0}\cap \ldots\cap U_{\tau_i}$.
We call $\P$ co-simplicial if the intersection of any set of $k+1$
many maximal cells is either empty or a $(n-k)$-dimensional simplex.
In the co-simplicial case the index sets of the sum of
\eqref{eq-def-homology} and of $C^j(B,\Lambda)$ are naturally in bijection (ignoring empty
$U_I$) and a map can be defined as the
isomorphism of complexes that is on each term given by
\begin{equation}
\label{co-simplicial-case}
\Gamma(\tau_0\cap\ldots\cap\tau_i,\Lambda)
\stackrel{\sim}{\longrightarrow} \Gamma(U_{\tau_0}\cap\ldots\cap
U_{\tau_i},\Lambda)
\end{equation}
up to some sign convention. We are going to show (see Proposition~\ref{prop-map-ho-coho}) that the 
map \eqref{co-simplicial-case} can be generalized to the situation where $\P$ is not necessarily co-simplicial by
replacing the right-hand side of \eqref{co-simplicial-case} by a
complex $C^\bullet_{\tau_0\cap\ldots\cap\tau_i}(B,\Lambda)$.  
This goes similar for the other two complexes.
Each non-empty $U_I$ has a unique cell $\tau$ in $\P$ that is
maximal with the property of being contained in it.  Fixing this
cell $\tau$, gathering all terms in the \v{C}ech complex for open
sets $U_I$ where $\tau$ is this unique maximal cell, yields a
subgroup $C^\bullet_\tau(B,\Lambda)\subseteq C^\bullet(B,\Lambda)$ and similarly for the other terms in \eqref{cohomology-ex-seq}. Precisely,
\[
C^i(B,\Lambda)=\bigoplus_{\tau\in\P} C^i_\tau(B,\Lambda) \qquad
\hbox{ for }  \qquad C^i_\tau(B,\Lambda) := \bigoplus_{\left\{
I\left| {I\subseteq\P^\max, |I|=i+1, \tau\subset U_I}\atop{\sigma\not\subset
U_I\hbox{\tiny whenever }\tau\subsetneq\sigma}\right.\right\}}
\Gamma(U_I,\Lambda),
\]
\[
C^i(\partial B,\Lambda)=\bigoplus_{\tau\in\partial\P} C^i_\tau(\partial B,\Lambda) \quad
\hbox{for}  \quad C^i_\tau(\partial B,\Lambda) := \bigoplus_{\left\{
I\left| {I\subseteq\P^\max, |I|=i+1, \tau\subset U_I}\atop{\sigma\not\subset
U_I\hbox{\tiny whenever }\tau\subsetneq\sigma\in\partial\P}\right.\right\}}
\Gamma(U_I\cap \partial B,\Lambda),
\]
\[
C^i(B,\partial B;\Lambda)=\bigoplus_{\tau\in\P} C^i_\tau(B,\partial B;\Lambda) \qquad
\hbox{for}  \qquad C^i_\tau(B,\partial B;\Lambda) := C^i_\tau(B;\Lambda)\,\oplus\, C^{i-1}_\tau(\partial B,\Lambda).
\]
We consider decreasing filtrations $F^\bullet$ by sub-complexes of the three complexes in \eqref{cohomology-ex-seq} respectively so the maps in the sequence respect the filtrations, define $F^kC^i(B,\Lambda) = \bigoplus_{{\tau\in\P}\atop{n-\dim\tau\ge k}} C^i_\tau(B,\Lambda)$ and by the same formula also for the other two.
Indeed, one checks that the maps in \eqref{cohomology-ex-seq} become strict for these filtrations, i.e.~the induced sequence on each graded piece is also exact.
Indeed, this follows from
\[
\Gr_F^kC^i(B,\Lambda) = F^kC^i(B,\Lambda)/F^{k+1}C^i(B,\Lambda)=\bigoplus_{{\tau\in\P}\atop{n-\dim\tau=k}}C^i_\tau(B,\Lambda)
\]
and a similar expression for the other two complexes in \eqref{cohomology-ex-seq}.
\begin{lemma} 
\label{lemma-gradedcoho}
We have $H^i(\Gr_F^kC^\bullet(B,\Lambda))=H^i(\Gr_F^kC^\bullet(\partial B,\Lambda))=H^i(\Gr_F^kC^\bullet(B,\partial;\Lambda))=0$ unless $i=k$ and then
\[
H^i(\Gr_F^iC^\bullet(B,\Lambda))
=\bigoplus_{{\tau\in\P,\tau\not\subset\partial B}\atop{n-\dim\tau=i}}
\Gamma(\tau,\Lambda),
\qquad
H^i(\Gr_F^iC^\bullet(\partial B,\Lambda))
=\bigoplus_{{\tau\in\P,\tau\subset\partial B}\atop{n-\dim\tau=i}}
\Gamma(\tau,\Lambda)
\]
and $H^i(\Gr_F^iC^\bullet(B,\partial B;\Lambda))=\bigoplus_{\tau\in\P,\,n-\dim\tau=i}
\Gamma(\tau,\Lambda)$.
\end{lemma}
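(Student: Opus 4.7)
The plan is to reduce each of the three vanishing statements to a local combinatorial computation at one simplex $\tau\in\P$ at a time, and then to evaluate the combinatorics using the manifold hypothesis on $B$ (and on $\partial B$).

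First I would exploit that $\Gr_F^k C^\bullet=\bigoplus_{\tau:\,n-\dim\tau=k} C^\bullet_\tau$ is a direct sum of subcomplexes, so it suffices to treat one $\tau$ at a time. Next, using the $\P$-acyclicity of $\Lambda$ and the fact that each $U_I$ deformation retracts onto $\bigcap_{\sigma\in I}\sigma$, I would identify $\Gamma(U_I,\Lambda)$ (respectively $\Gamma(U_I\cap\partial B,\Lambda)$) with $\Gamma(\tau,\Lambda)$; here the indexing condition of $C^\bullet_\tau$ pins down $\bigcap I$ to equal $\tau$, or in the $\partial B$ case to a cell whose boundary trace is $\tau$. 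This rewrites $C^\bullet_\tau$ as $\Gamma(\tau,\Lambda)\otimes K^\bullet_\tau$, where $K^\bullet_\tau$ is a purely combinatorial complex supported on the abstract simplex $\Delta^{S_\tau}$ with vertex set $S_\tau:=\{\sigma\in\P^\max:\tau\subset\sigma\}$, after excising the faces forbidden by the indexing condition.

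The core step will be computing $H^\bullet(K^\bullet_\tau)$. I would realize $K^\bullet_\tau$ as the mapping cone of the natural restriction $\check C^\bullet(\Delta^{S_\tau},\ZZ)\to\check C^\bullet(L_\tau,\ZZ)$, where $L_\tau\subset\Delta^{S_\tau}$ is the subcomplex of excluded faces, and use the contractibility of $\Delta^{S_\tau}$ together with the long exact sequence of the pair to obtain $H^\bullet(K^\bullet_\tau)\cong\widetilde H^{\bullet-1}(L_\tau,\ZZ)$. The nerve lemma applied to the cover of $L_\tau$ by the full subsimplices $\Delta^{S_{\tau'}}$ for $\tau'\supsetneq\tau$, whose pairwise intersections are again full subsimplices and hence contractible, identifies $L_\tau$ up to homotopy with the geometric link of $\tau$. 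At this point the manifold hypothesis takes over: for interior $\tau$ the link is a sphere $S^{k-1}$, yielding a single $\ZZ$ in the expected cohomological degree, while for $\tau\subset\partial B$ the link in $B$ is a disk and contributes nothing. This closes the claim for $C^\bullet(B,\Lambda)$, and the parallel argument carried out inside the $(n-1)$-manifold $\partial B$ gives the claim for $C^\bullet(\partial B,\Lambda)$.

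Finally, for $C^\bullet(B,\partial B;\Lambda)$ I would apply the long exact sequence in cohomology coming from \eqref{cohomology-ex-seq}; the filtration $F$ is strict for that short exact sequence, so the same long exact sequence holds after passing to $\Gr^k$. Plugging in the already-proved concentrations in a single degree and using that the interior and boundary indexing sets are disjoint (so that the natural restriction $H^\bullet(\Gr^k C^\bullet(B,\Lambda))\to H^\bullet(\Gr^k C^\bullet(\partial B,\Lambda))$ vanishes) splits the sequence and combines the two contributions into a single group in the asserted degree. The main obstacle I anticipate lies in the second paragraph: rigorously promoting the acyclicity of $\Lambda$ on cells to the identification $\Gamma(U_I,\Lambda)\cong\Gamma(\tau,\Lambda)$ on the neighbourhoods, and then checking the nerve-lemma hypothesis on $L_\tau$ together with the precise dictionary between $L_\tau$ and the geometric link in the stratified setting. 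This is the point at which the assumption that intersections of cells are again cells and the topological manifold hypothesis both enter in an essential way.
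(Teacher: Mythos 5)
Your plan follows the paper's proof step by step: decompose $\Gr_F^k$ into a direct sum of subcomplexes $C^\bullet_\tau$ indexed by cells, factor out $\Gamma(\tau,\Lambda)$ using constructibility to reduce to $\ZZ$-coefficients, and identify the remaining cohomology with (reduced) cohomology of the link of $\tau$, which by the manifold hypothesis is a sphere for interior $\tau$ and contractible for $\tau\subset\partial B$. The only packaging difference is in the last identification: the paper realizes $C^\bullet_\tau(B,\ZZ)$ as the \emph{kernel} (not the mapping cone) of the surjection $\check C^\bullet(\fou_\tau,\ZZ)\twoheadrightarrow\overline{C}_\tau^\bullet(\ZZ)$ and recognizes the resulting long exact sequence as the one relating $H^\bullet_\tau(K)$, $H^\bullet(K)$ and $H^\bullet(K\setminus\tau)$ for $K$ the closed star, so the geometric \v{C}ech covers carry out the homotopy identification directly and no nerve lemma is needed. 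Your conclusion $H^\bullet(K^\bullet_\tau)\cong\widetilde H^{\bullet-1}(L_\tau)$ is correct when $K^\bullet_\tau$ is understood as the kernel; taken literally, the cone would shift degrees.

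Two places where the sketch passes over a real step. For $C^\bullet(\partial B,\Lambda)$: this \v{C}ech complex is taken with respect to $\fou|_{\partial\P}$, i.e.\ restrictions $U_\sigma\cap\partial B$ for $\sigma\in\P^\max$, \emph{not} a cover by stars of maximal cells of $\partial B$, so you cannot simply re-run the argument ``inside the $(n-1)$-manifold $\partial B$''; the paper supplies the extra observation that the quasi-isomorphism $\check C^\bullet(\fou^\partial,\Lambda)\to\check C^\bullet(\fou|_{\partial\P},\Lambda)$ is strictly compatible with $F$, which is what transfers the computation. For $C^\bullet(B,\partial B;\Lambda)$: disjointness of the interior and boundary index sets does not by itself show that the connecting map in the $\Gr^k_F$ long exact sequence (which is the restriction) vanishes. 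What actually does the work is geometric: for interior $\tau$, every $U_I$ contributing to $C^\bullet_\tau(B,\Lambda)$ has $U_I\cap\partial B=\emptyset$ when $U_I$ is small enough, so the restriction is already zero at the chain level; for boundary $\tau$, the complex $C^\bullet_\tau(B,\ZZ)$ is exact. The paper avoids touching this map entirely by filtering $\Gr^k_F C^\bullet(B,\partial B;\Lambda)$ in two steps whose graded pieces are precisely the complexes already handled.
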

\begin{proof} 
We first deal with $C^\bullet(B,\Lambda)$. Observe that
$
\Gr_F^kC^\bullet(B,\Lambda)=\bigoplus_{{\tau\in\P}\atop{\codim\tau=k}}
C_\tau^\bullet(B,\ZZ)\otimes_\ZZ \Gamma(\tau,\Lambda),
$
so to deal with $C^\bullet(B,\Lambda)$, it suffices to show that $H^i(C_\tau^\bullet(B,\ZZ))$ is isomorphic
to $\ZZ$ when $\codim\tau=i$ and trivial otherwise.  The set
$\fou_\tau=\{U_\sigma\in\fou| \tau\subset\sigma \}$ covers
an open ball containing $\tau$.  Let $C^\bullet(\fou_\tau,\ZZ)$
denote the associated \v{C}ech complex. We have a short exact
sequence of complexes
\begin{equation}
\label{Utau-ses}
0\lra  C_\tau^\bullet(B,\ZZ) \lra C^\bullet(\fou_\tau,\ZZ)\lra
\overline{C}_\tau^\bullet(\ZZ) \lra 0
\end{equation}
where 
$
\overline{C}_\tau^i(\ZZ) =
\bigoplus \Gamma(U_{\sigma_0}\cap\ldots\cap U_{\sigma_i},\ZZ)
$
is the induced cokernel that has its sum running over 
subsets $\{U_{\sigma_0},\ldots,U_{\sigma_i}\}\subset\fou_\tau$ with
$\tau\neq\sigma_0\cap\ldots\cap\sigma_i$. Denoting
$K=\bigcup_{\sigma\in\P,\tau\subseteq\sigma}\sigma$, one finds the
sequence \eqref{Utau-ses} is naturally identified with a sequence of
\v{C}ech complexes computing the long exact sequence
\[
\ldots\lra H^i_\tau(K,\ZZ)\lra H^i(K,\ZZ)\lra
H^i(K\setminus\tau,\ZZ) \lra\ldots
\]
and we have
\[
K\setminus\tau\hbox{ is }
\left\{
\begin{array}{l} 
\hbox{homotopic to }S^d\hbox{ with }d=\codim\tau-1\hbox{ if
}\tau\not\subset\partial B,\\ \hbox{contractible if
}\tau\subset\partial B.
\end{array}\right.
\]
Since $K$ is contractible we get that $H^i_\tau(K,\ZZ)=0$ for all
$i$ if $\tau\subset\partial B$, that is, $C^\bullet_\tau(B,\ZZ)$ is
exact in this case. Otherwise, we find $H^i_\tau(K,\ZZ)\cong\ZZ$ for
$i=\codim\tau$ and trivial otherwise. The choice of the isomorphism
depends on the orientation of $S^d$ which can be taken to be the
induced one from the orientations of $B$ and $\tau$.

The same arguments give the claim also for $C^\bullet(\partial B,\Lambda)$ after noting that the quasi-isomorphism
$\check C^\bullet(\fou^\partial,\Lambda)\ra \check C^\bullet(\fou|_{\partial\P},\Lambda)$ is strictly compatible with the filtration $F$.

Finally the claim for $C^\bullet(B,\partial B;\Lambda)$ follows because this complex can be filtered with two graded pieces that are the complexes that we just dealt with.
\end{proof}

\begin{lemma} 
\label{spec-seq-ho-coho}
The spectral sequence
$$
E_1^{p,q}=H^{p+q}(\Gr^p_F C^\bullet(B,\Lambda)) \Rightarrow
H^{p+q}(B,\Lambda).
$$
degenerates at $E_2$ and a similar statement holds for $C^\bullet(\partial B,\Lambda)$ and $C^\bullet(B,\partial B;\Lambda)$.
\end{lemma}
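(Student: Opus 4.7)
The plan is to read off the $E_1$-page directly from Lemma~\ref{lemma-gradedcoho} and observe that it is concentrated on a single horizontal row, which forces automatic degeneration at $E_2$ for dimension reasons alone.

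First I would recall that the spectral sequence associated with the decreasing filtration $F^\bullet$ on the cochain complex $C^\bullet(B,\Lambda)$ has, on the first page, the term
\[
E_1^{p,q} = H^{p+q}\bigl(\Gr_F^p C^\bullet(B,\Lambda)\bigr),
\]
with differential $d_1 \colon E_1^{p,q} \to E_1^{p+1,q}$ induced by the connecting homomorphism of the short exact sequence $0 \to \Gr_F^{p+1} \to F^p/F^{p+2} \to \Gr_F^p \to 0$. By Lemma~\ref{lemma-gradedcoho}, $H^i(\Gr_F^p C^\bullet(B,\Lambda)) = 0$ whenever $i \neq p$. Substituting $i = p+q$ gives $E_1^{p,q} = 0$ unless $q = 0$, so the whole $E_1$-page is supported on the row $q = 0$.

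Next I would invoke the standard fact that differentials on the $E_r$-page act as $d_r \colon E_r^{p,q} \to E_r^{p+r,\, q-r+1}$. For $r \geq 2$ and any class on the row $q = 0$, the target lives at height $q - r + 1 = 1 - r \leq -1$, hence is zero. Similarly, any incoming differential $d_r$ to a class on $q = 0$ originates from height $q + r - 1 = r - 1 \geq 1$, which is again zero on the $E_2$-page. Therefore $E_2^{p,q} = E_\infty^{p,q}$, which is precisely the assertion of degeneration at $E_2$.

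Finally, the proofs for $C^\bullet(\partial B,\Lambda)$ and $C^\bullet(B,\partial B;\Lambda)$ are word-for-word the same, since Lemma~\ref{lemma-gradedcoho} gives the identical concentration statement for both of these filtered complexes; the only input used is that $H^i(\Gr_F^p)$ vanishes off the diagonal $i=p$. There is no genuine obstacle here — Lemma~\ref{lemma-gradedcoho} does all the work, and the degeneration is purely a matter of the $E_1$-page being one-row, after which no nontrivial higher differential has room to act.
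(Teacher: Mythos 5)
Your proof is correct and takes essentially the same approach as the paper's, which is simply one sentence: the $E_1$ page is concentrated in $q=0$ by Lemma~\ref{lemma-gradedcoho}. You have merely spelled out the standard bidegree count showing that a one-row $E_1$-page leaves no room for higher differentials, which is exactly what the paper leaves implicit.
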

\begin{proof} The $E_1$ page is concentrated in
$q=0$ by Lemma~\ref{lemma-gradedcoho}. 
\end{proof}
Let $d_1^{p,q}\colon E_1^{p,q}\ra E_1^{p+1,q}$ denote the differential of the $E_1$ page.

\begin{proposition} 
\label{prop-map-ho-coho}
By Lemma~\ref{lemma-gradedcoho}, we have $H^{n-i}(\Gr_F^{n-i}
C^\bullet(B,\Lambda)) =
\bigoplus_{{\tau\in\P,\tau\not\subset\partial B}\atop{\dim\tau=i}}
\Gamma(\tau,\Lambda)$ and therefore an identification
$
f\colon C_i(B,\partial B;\Lambda)\lra H^{n-i}(\Gr^{n-i}_F C^{\bullet}(B;\Lambda)).
$
which fits in an isomorphism of exact sequences of complexes
$$
\resizebox{\textwidth}{!}{
\xymatrix@C=30pt
{ 
0\lra C_i(\partial B;\Lambda)\ar[r]\ar[d] & C_i(B;\Lambda) \ar[r]\ar[d] & C_i(B,\partial B;\Lambda)\ar^f[d] \lra 0\\
0\ra  H^{n-1-i}(\Gr^{n-1-i}_F C^{\bullet}(\partial B;\Lambda))\ar[r] &  H^{n-i}(\Gr^{n-i}_F C^{\bullet}(B,\partial B;\Lambda)) \ar[r]& H^{n-i}(\Gr^{n-i}_F C^{\bullet}(B;\Lambda))\ra  0.
}
}
$$
i.e.~the map $f$ upgrades to an isomorphism of complexes (varying $i$) when taking
$\partial_i$ and $d^{n-i,0}_1$ for the differentials respectively and a similar statement holds for the other vertical maps in the diagram.
\end{proposition}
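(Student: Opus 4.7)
\medskip
\noindent\textbf{Proof plan for Proposition~\ref{prop-map-ho-coho}.}

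My plan is to first define $f$ and its analogs for $C_i(B;\Lambda)$ and $C_i(\partial B;\Lambda)$ as the direct-sum ``identity'' maps, using that Lemma~\ref{lemma-gradedcoho} already matches both sides as $\bigoplus_\tau \Gamma(\tau,\Lambda)$ indexed over the same set of cells (of the appropriate dimension, interior or boundary, as the case may be). With this, the commutativity of the diagram on each $\tau$-summand is essentially tautological: the top row \eqref{homology-ex-seq} decomposes by cells as $0\to \bigoplus_{\dim\tau=i,\tau\subset\partial B}\Gamma(\tau,\Lambda)\to \bigoplus_{\dim\tau=i}\Gamma(\tau,\Lambda)\to \bigoplus_{\dim\tau=i,\tau\not\subset\partial B}\Gamma(\tau,\Lambda)\to 0$, and applying $H^{n-i}(\Gr^{n-i}_F(-))$ to the short exact sequence of complexes \eqref{cohomology-ex-seq} yields the bottom row. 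The latter operation preserves exactness because, by Lemma~\ref{lemma-gradedcoho}, $H^j(\Gr^{n-i}_F(-))$ vanishes outside $j=n-i$ for each of the three complexes, so the long exact sequence of the triple degenerates to the desired short exact sequence.

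The real content is the claim that $f$ intertwines the boundary $\partial_i$ with $d_1^{n-i,0}$. Recall that $d_1^{n-i,0}$ is the connecting homomorphism of the short exact sequence of complexes
\begin{equation*}
0\to \Gr^{n-i+1}_F C^\bullet(B,\Lambda)\to F^{n-i}C^\bullet(B,\Lambda)/F^{n-i+2}C^\bullet(B,\Lambda)\to \Gr^{n-i}_F C^\bullet(B,\Lambda)\to 0.
\end{equation*}
I would compute it explicitly on a generator. Fix $\tau\in\P$ with $\dim\tau=i$ and $\tau\not\subset\partial B$, and take $\alpha\in\Gamma(\tau,\Lambda)$. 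Via the isomorphism $H^{\codim\tau}_\tau(K,\ZZ)\cong\ZZ$ produced in the proof of Lemma~\ref{lemma-gradedcoho}, $\alpha$ lifts to a \v{C}ech cochain in $F^{n-i}C^{n-i}(B,\Lambda)$ supported on the $(n-i+1)$-fold intersection $U_{\sigma_0}\cap\cdots\cap U_{\sigma_{n-i}}$, where $\sigma_0,\dots,\sigma_{n-i}$ is the full list of maximal cells containing $\tau$. Applying the \v{C}ech differential pushes this into $F^{n-i+1}$, and its image in $\Gr^{n-i+1}_F$ decomposes as a sum over codimension-one faces $\tau'$ of $\tau$ of the restriction of $\alpha$ to $\tau'$, weighted by signs arising from (a) the \v{C}ech sign from the chosen ordering of $\P^{\max}$ and (b) the orientation choice used to trivialize $H^{\codim\tau}_\tau(K,\ZZ)$ and $H^{\codim\tau'}_{\tau'}(K,\ZZ)$.

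The main obstacle is the sign bookkeeping: I need these combined signs to equal $\eps_{\tau'\subset\tau}$ from Definition~\ref{def-homology}. I would resolve this by fixing a single consistent convention — orient each $\tau$ so that, for any $\sigma\in\P^{\max}$ containing $\tau$, the orientation of $\tau$ wedged with a chosen system of outward normals from $\tau$ into $\sigma$ agrees with the orientation of $B$ restricted to $\sigma$; then the orientation of the transverse sphere $S^{\codim\tau-1}$ used in the trivialization $H^{\codim\tau}_\tau(K,\ZZ)\cong\ZZ$ of Lemma~\ref{lemma-gradedcoho} is pinned down, and compatible with the choice for any codimension-one face $\tau'\subset\tau$ in the sense that the induced boundary orientation on the hemisphere obtained by fixing one factor reproduces $\eps_{\tau'\subset\tau}$. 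A direct local computation at a point of $\tau'$ — reducing to the case where $B$ locally looks like $\RR^n$, $\tau$ is a half-$\RR^i$ and $\tau'=\RR^{i-1}$ — then matches the \v{C}ech coboundary sign with the simplicial sign. Once this sign identity is verified, the same argument applied to $C^\bullet(\partial B,\Lambda)$ and $C^\bullet(B,\partial B;\Lambda)$ gives the analogous chain-map statements for the other two vertical maps, completing the proof.
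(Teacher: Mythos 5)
Your overall strategy --- define $f$ as the tautological identification via Lemma~\ref{lemma-gradedcoho}, observe the short exact sequences decompose cell-by-cell, and reduce the whole proposition to the sign identity $d_1^{n-i,0}f = f\partial_i$ on a generator $\alpha\in\Gamma(\tau,\Lambda)$ --- is correct and is essentially the skeleton of the paper's proof. You also rightly identify $d_1$ as the connecting map of the filtration and that the only real content is the sign bookkeeping, together with the observation that the general-$\Lambda$ case factors through $\Lambda=\ZZ$ by tensoring with the restriction map.

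However, there is a genuine gap in the step where you lift $\alpha$ to an explicit \v{C}ech cochain. You assert that the generator of $H^{n-i}C^\bullet_\tau(B,\ZZ)$ is supported on a single intersection $U_{\sigma_0}\cap\cdots\cap U_{\sigma_{n-i}}$ with $\sigma_0,\dots,\sigma_{n-i}$ being \emph{the full list} of maximal cells containing $\tau$. This presupposes that exactly $\codim\tau+1$ maximal cells contain $\tau$, i.e.~that $\P$ is co-simplicial --- and the paper explicitly introduces Proposition~\ref{prop-map-ho-coho} as the generalization of the ``obvious'' map \eqref{co-simplicial-case} to \emph{non}-co-simplicial $\P$, where that hypothesis fails (already a vertex in a triangulated surface of valence $>3$ is a counterexample, and barycentric subdivisions, which the Setup favours, typically produce such cells). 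In the general case the representative cocycle is a linear combination over many $(n-i+1)$-element subsets $I$, and chasing the \v{C}ech coboundary of such a sum while tracking signs is exactly the combinatorial mess the paper avoids. The paper's device is to replace $C^\bullet_\sigma(B,\ZZ)$ by the local cohomology $H^\bullet_{B^\circ_\sigma}(B_\sigma,\ZZ)$ of a transverse ball $B_\sigma$, so that the composite \eqref{eq-compos} becomes the geometric chain \eqref{eq-compos2} of natural maps between local cohomology groups of nested balls; the sign then reduces to the single fact that $B_\tau$ carries the boundary orientation of $B_\omega$. Your orientation convention (fixing the orientation of $\tau$ via ``a chosen system of outward normals into $\sigma$'') is also underspecified --- different choices of normal system can a priori yield different orientations --- whereas the paper never needs to fix orientations of the cells at all beyond the ones already chosen in Definition~\ref{def-homology}. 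To repair the proposal you would either need to restrict to co-simplicial $\P$ (insufficient for the application), carry out the full \v{C}ech cocycle sign chase, or adopt the transverse-ball reformulation.
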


\begin{proof}  
We need to show that $f$ commutes with differentials, i.e.~that
$d^{\bullet,\bullet}_{1}f=f\partial_\bullet$.  It will be sufficient
to show for an $i$-simplex $\tau$ and a facet $\omega$ of $\tau$
with $\omega\not\subset\partial B$ that for every element
$\alpha\in\Gamma(\tau,\Lambda)$, we have
\[
f((\partial_i\alpha)_\omega)=(d_1^{n-i,0}f \alpha)_\omega.
\]
where $(\partial_i\alpha)_\omega$ denotes the projection of
$\partial_i\alpha$ to ${\Gamma(\omega,\Lambda)}$ and similarly
$(d_1^{n-i,0}f \alpha)_\omega$ denotes the projection of
$d_1^{n-i,0}f \alpha$ to ${H^{n-i+1}C_\omega^{\bullet}(B,\Lambda)}$.
We first do the case $\Lambda=\ZZ$. 
The map
$$
d_1^{n-i,0}\colon  H^{n-i}C_\tau^{\bullet}(B,\ZZ) \lra
{H^{n-i+1}C_\omega^{\bullet}(B,\ZZ)} 
$$
is the composition of the \v{C}ech differential with projection:
\begin{equation} \label{eq-compos}
\bigoplus_{\left\{ I \left| {|I|=i+1, \tau\subset
U_I}\atop{\sigma\not\subset U_I\hbox{\tiny whenever
}\tau\subsetneq\sigma}\right.\right\}} \Gamma(U_I,\ZZ)
\lra
\bigoplus_{\left\{ I \left| |I|=i+2\right.\right\}} \Gamma(U_I,\ZZ) 
\lra
\bigoplus_{\left\{ I \left| {|I|=i+2, \omega\subset
U_I}\atop{\sigma\not\subset U_I\hbox{\tiny whenever
}\omega\subsetneq\sigma}\right.\right\}} \Gamma(U_I,\ZZ).
\end{equation}
\begin{minipage}[b]{0.7\textwidth}
To better understand it, let $B_\sigma$ denote a suitably
embedded closed ball of dimension $n-\dim\sigma$ in $B$ meeting
$\sigma$ transversely (in a point) where $\sigma$ stands for $\tau$ or
$\omega$. Without loss of generality, $B_\tau$ is part of the boundary of $B_\omega$, see the 
illustration on the right.  

Observe that, similarly to the proof of Lemma~\ref{lemma-gradedcoho},
the \v{C}ech complex $C^\bullet_\sigma(B,\ZZ)$ naturally computes
$H^{\bullet}_{B^\circ_\sigma}(B_\sigma,\ZZ)$
where $B_\sigma^\circ$
is a closed ball inside $B_\sigma$ obtained by a small shrinking of
\end{minipage}
\begin{minipage}[b]{0.3\textwidth}
\qquad \includegraphics{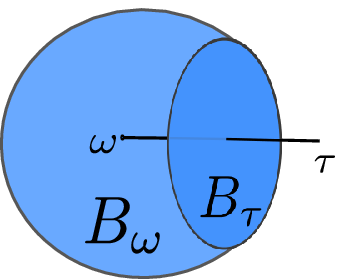}\\
\end{minipage}
 $B_\sigma$. We claim that \eqref{eq-compos} is given by the composition of natural maps
\begin{equation} \label{eq-compos2}
H^{n-i}_{B^\circ_\tau}(B_\tau,\ZZ)\lra
H^{n-i}_{B^\circ_\tau}(\partial B_\omega,\ZZ) \lra H^{n-i}(\partial
B_\omega,\ZZ)\lra H^{n-(i-1)}_{B^\circ_\omega}(B_\omega,\ZZ).
\end{equation}
Indeed, we may replace the middle term in \eqref{eq-compos} by
$
\bigoplus_{\left\{ I \left| \omega\subset U_I,
|I|=i+1\right.\right\}} \Gamma(U_I,\ZZ_{\partial B_\omega})
$ 
without changing the composition. This identifies \eqref{eq-compos} and \eqref{eq-compos2}. 
As an elementary fact, each map in \eqref{eq-compos2} is an isomorphism if $n-i>0$.
If $n=i$, i.e.~$\dim B_\tau=0$, the third term has rank two but the composition is still an isomorphism.
If $\omega$ has the induced orientation from $\tau$ then the orientation of $B_\tau$ is
the induced one from $\partial B_\omega$ so there is the sign agrees with that of the map
$\Gamma(\tau,\ZZ)\ra \Gamma(\omega,\ZZ)$ in $C_\bullet(B,\ZZ)$. We finished showing the $\Lambda=\ZZ$ case.

The general case follows directly as the component of the differentials
we considered is then just additionally tensored with the
restriction map $\Gamma(\tau,\Lambda)\ra \Gamma(\omega,\Lambda)$ in
the source as well as in the target of $f$ as observed before in the proof of Lemma~\ref{lemma-gradedcoho}.

The proof for the other vertical maps in the diagram follows from that for $f$: indeed, the left vertical map can be treated on the same footing and the middle one is then composed from the other two maps.
\end{proof}

\begin{theorem}
\label{cor-iso-ho-coho}
Given Setup~\ref{setup-acyclic}, the diagram in Proposition~\ref{prop-map-ho-coho} induces a natural isomorphism of long exact sequences 
\begin{equation*}
\resizebox{\textwidth}{!}{
\xymatrix@C=30pt
{ 
\dots\ar[r]& H_k(\partial B) \ar[r]\ar^\sim[d] & H_k( B) \ar[r]\ar^\sim[d] & H_k(B,\partial B) \ar[r]\ar^\sim[d]& H_{k-1}(\partial B) \ar[r]\ar^\sim[d] &\dots\\
\dots\ar[r]& H^{(n-1)-k}(\partial B) \ar[r] & H^{n-k}( B,\partial B) \ar[r] & H^{n-k}(B) \ar[r] &H^{(n-1)-(k-1)}(\partial B) \ar[r] &\dots
}
}
\end{equation*}
where all coefficients are $\Lambda$.
\end{theorem}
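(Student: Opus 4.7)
The plan is to deduce the theorem almost formally from Proposition~\ref{prop-map-ho-coho} and Lemma~\ref{spec-seq-ho-coho}, both of which do the substantive work. First I would observe that Proposition~\ref{prop-map-ho-coho} does not merely supply three isomorphisms between individual groups but an actual isomorphism of short exact sequences of complexes
\[
\xymatrix@C=20pt{
0\ar[r] & C_i(\partial B;\Lambda)\ar[r]\ar^\sim[d] & C_i(B;\Lambda) \ar[r]\ar^\sim[d] & C_i(B,\partial B;\Lambda)\ar^\sim[d] \ar[r]& 0\\
0\ar[r] & H^{n-1-i}(\Gr^{n-1-i}_F C^\bullet(\partial B;\Lambda))\ar[r] & H^{n-i}(\Gr^{n-i}_F C^\bullet(B,\partial B;\Lambda))\ar[r] & H^{n-i}(\Gr^{n-i}_F C^\bullet(B;\Lambda))\ar[r] & 0,
}
\]
where on the bottom row the differentials in the variable $i$ are the $d_1$-differentials of the spectral sequence of Lemma~\ref{spec-seq-ho-coho} (for $\partial B$, $(B,\partial B)$, and $B$ respectively), up to the re-indexing $i\mapsto n-i$ or $i\mapsto n-1-i$. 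Passing to the long exact sequences in (co)homology of these two short exact sequences of complexes and invoking naturality produces a ladder of long exact sequences with vertical isomorphisms.

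Second, I would identify the (co)homology of the bottom row with the honest cohomology groups on the right-hand side of the theorem. By Lemma~\ref{spec-seq-ho-coho}, the spectral sequence $E_1^{p,q}=H^{p+q}(\Gr^p_F C^\bullet(B,\Lambda))\Rightarrow H^{p+q}(B,\Lambda)$ is concentrated on the row $q=0$ (by Lemma~\ref{lemma-gradedcoho}) and therefore degenerates at $E_2$. Consequently $H^{n-k}(B,\Lambda)$ is computed as the cohomology at position $n-k$ of the complex $(E_1^{\bullet,0},d_1)$, which is precisely the right-hand vertical target after taking cohomology in $i$. The identical argument applies to $\partial B$ and to $(B,\partial B)$, yielding
\[
H_k(\partial B;\Lambda)\cong H^{n-1-k}(\partial B;\Lambda),\quad H_k(B;\Lambda)\cong H^{n-k}(B,\partial B;\Lambda),\quad H_k(B,\partial B;\Lambda)\cong H^{n-k}(B;\Lambda).
\]

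Finally, the compatibility of these three isomorphisms with the connecting and forgetful maps in the long exact sequence is automatic: a morphism of short exact sequences of complexes induces a morphism of the associated long exact sequences, and our vertical maps are quasi-isomorphisms of complexes by the two previous steps. The main technical obstacle here is conceptual rather than computational, namely convincing oneself that the differential obtained on the bottom row by differentiating in $i$ genuinely is the $d_1$ of the Lemma~\ref{spec-seq-ho-coho} spectral sequence; but this is exactly what Proposition~\ref{prop-map-ho-coho} asserts when it says $f$ intertwines $\partial_i$ with $d_1^{n-i,0}$, so no further work is needed. The sign conventions introduced in the definition of the chain complex (via $\varepsilon_{\tau\subset\sigma}$) and the orientation of the transverse balls $B_\sigma$ match up by the computation already carried out in the proof of Proposition~\ref{prop-map-ho-coho}, so no additional sign check intrudes at this stage.
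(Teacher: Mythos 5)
Your proposal is correct and follows essentially the same route as the paper's proof: both combine the isomorphism of chain complexes from Proposition~\ref{prop-map-ho-coho} with the $E_2$-degeneration of Lemma~\ref{spec-seq-ho-coho} (via Lemma~\ref{lemma-gradedcoho}) to identify $(E_1^{\bullet,0},d_1)$ with singular cohomology and thereby pass from a graded-piece isomorphism to an unfiltered one. The paper phrases this as ``remove $\Gr_F^\bullet$ because the graded pieces are concentrated in a single degree,'' while you phrase it as an isomorphism of short exact sequences of complexes inducing a ladder of long exact sequences; both elide the same small verification that the long exact sequence of the $E_1^{\bullet,0}$ complexes agrees with the cohomology long exact sequence of the pair, which holds precisely because the filtrations become trivial on cohomology after degeneration.
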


\begin{proof} 
By Proposition~\ref{prop-map-ho-coho} and Lemma~\ref{spec-seq-ho-coho} we obtain an isomorphism
$\Gr^\bullet_F H_i(B,\partial B;\Lambda)\ra \Gr_F^\bullet H^{n-i}(B;\Lambda)$
where the filtration $F$ on homology is defined in the
straightforward manner.  The other vertical maps in 
the diagram of Proposition~\ref{prop-map-ho-coho} may be given a similar interpretation.
We may then remove $\Gr_F^\bullet$ from this map
because the graded pieces are concentrated in a single degree
by Lemma~\ref{lemma-gradedcoho}.
\end{proof}

\begin{corollary} 
\label{PLmap-is-the-same}
The vertical maps in Theorem~\ref{cor-iso-ho-coho} commute with maps of the sheaves of coefficients and are compatible with long exact sequences of (co)homology resulting from short exact sequences of $\P$-constructible sheaves.
Furthermore, the vertical maps agree with the inverses of the maps given in \cite[Theorem~V-9.3, p.330]{Br97} when inserting the given orientation to have an isomorphism $\shO\cong j_!\ZZ$ with $j\colon B\setminus\partial B\hra B$ the inclusion. 
In particular, the maps in \cite{Br97} are isomorphisms for $\P$-constructible sheaves on $B$.
\end{corollary}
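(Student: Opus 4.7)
The plan is to proceed in three steps that mirror the three claims of the corollary.

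First, I would establish naturality of the vertical maps in Theorem~\ref{cor-iso-ho-coho} under morphisms of $\P$-acyclic sheaves. Given a morphism $\Lambda \to \Lambda'$, both the chain complex $C_\bullet(B,\partial B;\Lambda) = \bigoplus_{\tau \not\subset \partial B} \Gamma(\tau,\Lambda)$ and the \v{C}ech complex $C^\bullet(B,\Lambda)$ built from $\Gamma(U_I,\Lambda)$ are manifestly functorial in $\Lambda$; the filtration $F^\bullet$ depends only on $\P$, so is preserved, and the identifications in Lemma~\ref{lemma-gradedcoho} are induced by restriction of sections, which is natural. Therefore the map $f$ of Proposition~\ref{prop-map-ho-coho} and its relative and boundary variants are natural. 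For a short exact sequence $0 \to \Lambda' \to \Lambda \to \Lambda'' \to 0$ of $\P$-acyclic sheaves, the hypothesis $H^i(U_I,\Lambda'')=0$ for $i>0$ ensures exactness on sections, hence a short exact sequence of both the chain and the filtered \v{C}ech complexes, and the naturality just established then produces the compatibility between the induced long exact sequences.

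Second, I would identify our vertical maps with (the inverses of) the Poincar\'e--Lefschetz isomorphisms of \cite[Theorem~V-9.3]{Br97}, which are constructed as cap product with the fundamental class $[B,\partial B] \in H_n(B,\partial B; j_!\ZZ)$. Both transformations are natural in the sheaf and compatible with open restrictions; by the axiomatic characterization of Poincar\'e--Lefschetz duality on oriented topological manifolds, any such natural transformation is determined by its value on a single model, namely a constant coefficient sheaf on one cell's open star. On such a local model, the graded piece $H^{n-i}(\Gr_F^{n-i} C^\bullet(B,\Lambda))$ of Lemma~\ref{lemma-gradedcoho} was built from the local cohomology $H^{n-i}_{B_\tau^\circ}(B_\tau,\ZZ)$ appearing in \eqref{eq-compos2}; this is precisely the local orientation isomorphism that $\cap[B,\partial B]$ induces. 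Once one confirms that our orientation convention on the transverse ball $B_\tau$ (inherited from the orientations of $B$ and $\tau$) matches the one Bredon uses on the fundamental class, the two maps coincide on the model, and naturality propagates the equality globally.

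Third, the final claim follows immediately: Theorem~\ref{cor-iso-ho-coho} shows our vertical maps are isomorphisms for every $\P$-acyclic sheaf, and by the preceding step they agree with Bredon's maps, which are therefore also isomorphisms on $\P$-acyclic sheaves. For a general $\P$-constructible sheaf $\Lambda$, Lemma~\ref{baryacyclic} supplies a barycentric refinement on which $\Lambda$ is acyclic, so the conclusion extends to all $\P$-constructible sheaves as stated.

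The main obstacle will be the second step. The verification that our spectral-sequence-based map agrees on the local model with cap product by a local fundamental class is conceptually clear but requires a careful bookkeeping of the signs arising from the orientations of $\tau$, of the induced orientation of $\partial B_\omega$, and of the connecting map in the \v{C}ech differential~\eqref{eq-compos}. This is implicit in the discussion at the end of the proof of Proposition~\ref{prop-map-ho-coho}, but matching it with Bredon's sign conventions is where most of the labor lies.
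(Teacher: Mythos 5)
Your overall architecture is right — extend to all $\P$-constructible sheaves via barycentric subdivision, establish naturality, then match against Bredon — but the second step, which you yourself flag as the main obstacle, is where the paper has a concrete tool that you don't supply, and I don't think the gap closes by the route you sketch.

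You appeal to an ``axiomatic characterization of Poincar\'e--Lefschetz duality on oriented topological manifolds'' to conclude that two natural transformations agreeing on a single local model must coincide. This is not a theorem you can invoke as stated: naturality in the sheaf argument plus agreement on constant coefficients on a single open star does not, by itself, determine the transformation on all $\P$-constructible sheaves. One needs a uniqueness principle for connected sequences of functors together with a class of objects on which both sides vanish in positive degrees, and one needs the two sides to genuinely be $\delta$-functors on a common source category. The paper supplies exactly this: it introduces a finite topological space structure on the poset $\P$ (open sets are up-closed under face inclusion), a continuous open map $\pi\colon B\ra\P$ giving an equivalence between sheaves on $\P$ and $\P$-constructible sheaves on $B$, and then views $F^i(\shF)=H_{n-i}(B,\pi^*\shF)$ and $G^i(\shF)=H^i(B,\partial B;\pi^*\shF)$ as cohomological $\delta$-functors on sheaves on $\P$. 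The agreement then follows from the uniqueness theorem for such $\delta$-functors, \cite[Theorem~II-6.2, p.53]{Br97}, with the base case $F^0\ra G^0$ being the fundamental class/orientation isomorphism in degree zero. Without naming this auxiliary topology and the $\delta$-functor uniqueness theorem, the reduction to a local model is not justified.

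A secondary issue: you establish compatibility with short exact sequences only for $\P$-acyclic sheaves, using $H^i(U_I,\Lambda'')=0$ to get exactness on sections, and only afterwards pass to $\P$-constructible sheaves via barycentric refinement. But a short exact sequence of $\P$-constructible sheaves has no reason to have $\P$-acyclic terms, so this order does not yield the stated compatibility for the class of sheaves in the corollary. The paper does the barycentric refinement first (Lemma~\ref{reduce-to-barycentric} combined with Lemma~\ref{baryacyclic}), so that all three complexes and the map $f$ are defined for every $\P$-constructible sheaf before one discusses functoriality, and then the long-exact-sequence compatibility is a direct consequence of the functoriality of the diagram in Proposition~\ref{prop-map-ho-coho}. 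You should move your last paragraph to the front.
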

\begin{proof} 
By Lemma~\ref{reduce-to-barycentric}, the barycentric refinement is an isomorphism on homology. 
By Lemma~\ref{baryacyclic}, a $\P$-constructible sheaf is $\P^\bary$-acyclic.
This allows us to produce the vertical maps in Theorem~\ref{cor-iso-ho-coho} for all $\P$-constructible sheaves, not just $\P$-acyclic ones.
The first statement follows from the observation that a short exact sequence of $\P$-acyclic sheaves yields a short exact sequence of diagrams of the type of the diagram given in Proposition~\ref{prop-map-ho-coho}. 

For the second statement, note that we may view $\P$ as a topological space by declaring a set $U$ to be open if for every $\tau\in U$ we have $\tau'\in U$ whenever $\tau\subset\tau'$. 
The map $\pi\colon B\ra \P$ that sends a point in the relative interior of $\tau$ to $\tau$ is continuous and open and induces a natural equivalence of sheaves on $\P$ and $\P$-constructible sheaves on $B$. 
We are going to apply \cite[Theorem~II-6.2, p.53]{Br97}, i.e.~say that the maps agree because both sides are cohomological functors. 
The cohomological functors we consider are functors of sheaves $\shF$ on $\P$. 
We only do the maps $H_i(B,\partial B;\Lambda)\ra H^{n-i}(B;\Lambda)$ because the one for $\partial B$ goes similar and then the five lemma implies the assertion.
The first functor to consider is $F^i(\shF)=H_{n-i}(B,\pi^*\shF)$ and the second is $G^i(\shF)=H^i(B,\partial B;\pi^*\shF)$.
The base case $F^0(\shF)\ra G^0(\shF)$ is the natural isomorphism
$H_n(B,\pi^*\shF)\ra H^0(B,\partial B;\pi^*\shF)\cong H^0(B,\shO\otimes\pi^*\shF)$ which does coincide with the map in \cite{Br97}. 
Using Theorem~\ref{simplicial=singular} and \cite{Br97}, one checks that $F^i$ and $G^i$ satisfy the required properties in \cite[before Theorem~II-6.2]{Br97} so the cited uniqueness theorem implies the assertion.
\end{proof}




\begin{thebibliography}{77}	




\bibitem[AR10]{AR10} Allermann, L., Rau, J.: ``First steps in tropical intersection theory'', Math. Z.~\textbf{264}(3) (2010), 633--670.

\bibitem[AP20]{AP20} Amini, O., Piquerez, M.: ``Hodge theory for tropical varieties'',~\texttt{arXiv:2007.07826 [math.AG]}, 144p.

\bibitem[Ar78]{Ar78} Arnold, V.I.: ``Mathematical methods of classical mechanics'', Springer-Verlag, New York, 1978, Graduate Texts in Mathematics 60.

\bibitem[Br93]{Br93} Bredon, G.E.: ``Topology and Geometry'', {\it Springer
Grad. Texts in Math.} 139, 1993.


\bibitem[Br97]{Br97} Bredon, G.E.: ``Sheaf Theory'', {\it Springer
Grad. Texts in Math.} 170, {\bf Second Edition}, 1997.

\bibitem[CBM09]{CBM09} Castano Bernard, R.,  Matessi,~D.: ``Lagrangian 3-torus fibrations'', J. Differential Geom. {\bf 81}(3), 2009, 483--573.

\bibitem[CBM13]{CBM13} Castano Bernard, R.,  Matessi,~D.: ``Conifold
transitions via affine geometry and mirror symmetry'', \emph{Geom.
Topol.} {\bf 18:3}, 2014.

\bibitem[Cu13]{Cu13} 
Curry,~J.: ``Sheaves, cosheaves and applications'', arXiv:1303.3255

\bibitem[Da78]{Da78} Danilov, V.I.: ``The geometry of toric varieties'', \emph{Uspekhi Mat. Nauk} {\bf 33}(2(200)), 1978, 85--134.

\bibitem[DK02]{DK02} Davis, J.\,F., Kirk, P.: ``Lecture notes in algebraic topology'', \emph{Graduate Studies in Mathematics} {\bf 35}, American Mathematical Society, Providence, RI, 2001, 367 p.

\bibitem[D80]{D} Duistermaat, J.J.: ``On global action‐angle coordinates'', \emph{Comm. Pure Appl. Math.} {\bf 33}(6), 1980, 687--706.

\bibitem[FFR19]{smoothtor} Felten, S., Filip, M., Ruddat, H.: ``Smoothing toroidal crossing spaces'',~\texttt{arXiv:1908.11235 [math.AG]}

\bibitem[GH84]{GH84} Goldman, W., Hirsch, M.: ``The radiance obstruction and parallel forms on affine manifolds'', \emph{Trans. Amer. Math. Soc.} {\bf 286}, 1984, 629--649.

\bibitem[GSh19]{GSh} Gross, A., Shokrieh, F.: ``A sheaf-theoretic approach to tropical homology'', 
~\texttt{arXiv:1906.09245 [math.AG]}.

\bibitem[Gr98]{Gr98} 
Gross, M.: ``Special Lagrangian Fibrations I: Topology'', in \emph{Integrable Systems
and Algebraic Geometry} (M.-H. Saito, Y. Shimizu \& K. Ueno, eds.), World
Scientific, 1998, 156--193.

\bibitem[Gr01]{topMS} Gross, M:
``Topological mirror symmetry'',
{Invent. Math.}~\textbf{144}(1) (2001), 75--137.

\bibitem[Gr05]{grossannalen} Gross, M.: ``Toric degenerations and Batyrev-Borisov duality'', Math. Annalen {\bf 333}(3), 2005, 645--688.

\bibitem[GHS16]{theta} Gross, M., Keel, S., Hacking, P., Siebert, B.:
``Theta functions on varieties with effective anti-canonical class'', to appear in Mem. Amer. Math. Soc.,~\texttt{arXiv:1601.07081 [math.AG]}.

\bibitem[GS06]{logmirror1} Gross, M., Siebert, B.: ``Mirror symmetry
via logarithmic degeneration data I'', {\it J. Differential Geom.}
{\bf 72}, 2006, 169--338.

\bibitem[GS10]{logmirror2} Gross, M., Siebert, B.: ``Mirror symmetry
via logarithmic degeneration data II'', J. Algebraic Geom. {\bf 19},
2010, 679--780. 

\bibitem[GS11]{affinecomplex} Gross, M., Siebert, B.: ``From real
affine geometry to complex geometry'', \emph{Annals of Math.} {\bf
174}, 2011, 1301--1428.

\bibitem[Ha02]{Ha02} Hatcher,~A.: ``Algebraic Topology'': {\it
Cambridge University Press}, {\bf 274}, (2002)

\bibitem[IKMZ19]{IKMZ} Itenberg, I., Katzarkov, L., Mikhalkin, G., Zharkov, I.: ``Tropical Homology'', Math. Ann. {\bf 374}, 2019, {963--1006}.

\bibitem[JRS18]{JRS} Jell, P., Rau, J., Shaw, K.:
``Lefschetz {$(1,1)$}-theorem in tropical geometry'',
{\'{E}pijournal Geom. Alg\'{e}brique}~\textbf{2}(11) (2018).

\bibitem[KS06]{KS} Kontsevich, M., Soibelman, Y.: ``{Affine structures and non-{A}rchimedean analytic spaces}'', The unity of mathematics, Progr. Math. {\bf 244}, Birkh\"{a}user Boston, Boston, MA, 2006, {321--385}.

\bibitem[LS10]{LS} Leung, C., Symington, M.: ``Almost toric symplectic four-manifolds'', J. Symplectic Geom. {\bf 8}(2), (2010), 143--187.
	
\bibitem[LM77]{LM} Moishezon, B.: ``Complex surfaces and connected sums of complex projective planes'', Springer-Verlag, Berlin, 1977. 
With an appendix by R. Livne, Lecture Notes in Mathematics, Vol. 603.

\bibitem[MR19]{tropLag} Mak, C.Y., Ruddat, H.:
``Tropically constructed Lagrangians in mirror quintic threefolds'', to appear in Forum Math. Sigma, preprint~\texttt{arXiv:1904.11780 [math.AG]}, 69pp.

\bibitem[MR20]{tropK3} Matessi, D., Ruddat, H.: ``Deformations of Tropical Polarized K3 Surfaces'', in preparation.

\bibitem[MZ14]{eigenwave} 
Mikhalkin, G., Zharkov, I.: ``Tropical Eigenwave and Intermediate Jacobians'', Homological Mirror Symmetry and Tropical Geometry, Lecture Notes of the Unione Matematica Italiana book series {\bf 15}, 2014, 309--349.

\bibitem[NOR16]{NOR} Nicaise, J., Overholser, D.P., Ruddat, H.: 
``Motivic zeta functions of the quartic and its mirror dual'', String-Math 2014, Proc. Sympos. Pure Math.~\textbf{93} (Amer. Math. Soc., Providence, RI) (2016), 189--200.

\bibitem[Pr18]{Pr18} Prince, T.: ``Lagrangian torus fibration models of Fano threefolds'',~\texttt{arXiv:1801.02997 [math.AG]}, 65p.

\bibitem[Pr19]{Pr19} Prince, T.: ``Smoothing Calabi--Yau toric hypersurfaces using the Gross--Siebert algorithm'',~\texttt{arXiv:1909.02140 [math.AG]}, 34p.

\bibitem[RS20]{RS20} Ruddat, H., Siebert, B.: 
``Period integrals from wall structures via tropical cycles, canonical coordinates in mirror symmetry and analyticity of toric degenerations'', Publ.math.IHES, (2020).

\bibitem[RS]{RS} Ruddat, H., Siebert, B.:
``Conifold Transitions via toric degenerations'', in preparation.

\bibitem[Ru10]{Ru} Ruddat, H.: ``Log Hodge groups on a toric
Calabi--Yau degeneration'', in Mirror Symmetry and Tropical Geometry,
\emph{Contemporary Mathematics} {\bf 527}, Amer. Math. Soc.,
Providence, RI, 2010, 113--164.

\bibitem[RZ20]{RZ20} Ruddat, H., Zharkov, I.: ``Compactifying torus fibrations over integral affine manifolds with singularities'',~\texttt{arXiv:2003.08521 [math.AG]}, 13p.

\bibitem[RZ]{RZ} Ruddat, H., Zharkov, I.: ``Topological Strominger--Yau--Zaslow Fibrations'', in preparation.

\bibitem[Sy01]{Sy01} Symington, M.: ``Four dimensions from two in
symplectic topology'', in: \textsl{Topology and geometry of
manifolds}, (Athens, GA, 2001), 153--208, Proc.\ Sympos.\ Pure\
Math., \textbf{71}, Amer.\ Math.\ Soc., Providence, RI, 2003.

\bibitem[SYZ96]{SYZ} Strominger, A., Yau~S.-T., Zaslow,~E.: ``Mirror
symmetry is T-duality'', {\it Nuclear Physics B,} {\bf 479},
1996, 243--259.

\bibitem[Th08]{Th} Thier, C.: ``On the Monodromy of 4-dimensional Lagrangian Fibrations'', Dissertation, Albert--Ludwigs--Universit\"at Freiburg, Oktober 2008.

\bibitem[Wi36]{Wi} Williamson J.: ``On the algebraic problem concerning the normal forms of linear dynamical systems'', Amer. J. Math. {\bf 58}, 1936, 141--163.

\bibitem[Ya18]{Ya} Yamamoto, Y.: ``Periods of tropical Calabi--Yau hypersurfaces'', ~\texttt{arXiv:1806.04239 [math.AG]}, 31p.

\end{thebibliography}
\end{document}